\crefname{equation}{}{}
\crefname{figure}{figure}{figures}
\Crefname{figure}{Figure}{Figures}
\crefname{definition}{definition}{definitions}
\Crefname{definition}{Definition}{Definitions}
\crefname{convention}{concention}{conventions}
\Crefname{convention}{Concention}{Conventions}
\theoremstyle{thmstyleone}%
\newtheorem{theorem}{Theorem}%  meant for continuous numbers
\newtheorem{lemma}{Lemma}%  meant for continuous numbers
\newtheorem{convention}{Convention}%  meant for continuous numbers
\newtheorem{assumption}{Assumption}
\newcommand{\TODO}[1][]{{\color{red}\textbf{TODO}\ifthenelse{\equal{#1}{}}{}{: #1}}}
\newcommand{\nelements}[1]{\left|#1\right|}
\newcommand{\closure}[1]{\overline{#1}}
\newcommand{\subentity}{E}
\newcommand{\refel}{R}
\newcommand{\cell}{C}
\newcommand{\polyspace}{\mathcal{V}}
\newcommand{\dualbasis}{\mathcal{L}}
\newcommand{\dual}[1]{#1^*}
\newcommand{\mat}[1]{\boldsymbol{\mathrm{#1}}}
\renewcommand{\vec}[1]{\boldsymbol{#1}}
\newcommand{\scurl}{\operatorname{curl}}
\newcommand{\sdiv}{\operatorname{div}}
\newcommand{\vcurl}{\scurl}
\newcommand{\mapping}{\mathcal{F}}
\newcommand{\geometrymap}{g}
\newcommand{\jacobian}{\mat{J}}
\newcommand{\rtriangle}[1][1.5mm]{\hspace{1pt}\begin{tikzpicture}[x=#1,y=#1,line cap=round,line join=round]
    \draw (0,0) -- (1,0) -- (0,1) -- cycle;
\end{tikzpicture}}
\newcommand{\interval}{\hspace{1pt}\raisebox{0.8mm}{\begin{tikzpicture}[x=1.5mm,y=1.5mm,line cap=round,line join=round]
    \draw (0,0) -- (1,0);
\end{tikzpicture}}}
\newcommand{\quadrilateral}{\hspace{1pt}\begin{tikzpicture}[x=1.5mm,y=1.5mm,line cap=round,line join=round]
    \draw (0,0) -- (1,0) -- (1, 1) -- (0,1) -- cycle;
\end{tikzpicture}}
\newcommand{\tetrahedron}{\hspace{1pt}\begin{tikzpicture}[x={(1.5mm,-0.2mm)},y={(1.3mm,0.5mm)},z={(0,1.5mm)},line cap=round,line join=round]
    \draw (0,0,0) -- (1,0,0) -- (0,1,0) -- cycle (0,0,1) -- (0,0,0) (0,0,1) -- (1,0,0) (0,0,1) -- (0,1,0);
\end{tikzpicture}}
\newcommand{\hexahedron}{\hspace{1pt}\begin{tikzpicture}[x={(1.5mm,-0.2mm)},y={(1.3mm,0.5mm)},z={(0,1.5mm)},line cap=round,line join=round]
    \draw (0,0,0) -- (1,0,0) -- (1,1,0) -- (0,1,0) -- cycle (0,0,1) -- (1,0,1) -- (1,1,1) -- (0,1,1) -- cycle (0,0,0) -- (0,0,1) (1,0,0) -- (1,0,1) (1,1,0) -- (1,1,1) (0,1,0) -- (0,1,1);
\end{tikzpicture}}
\newcommand{\prism}{\hspace{1pt}\begin{tikzpicture}[x={(1.5mm,-0.2mm)},y={(1.3mm,0.5mm)},z={(0,1.5mm)},line cap=round,line join=round]
    \draw (0,0,0) -- (1,0,0) -- (0,1,0) -- cycle (0,0,1) -- (1,0,1) -- (0,1,1) -- cycle (0,0,0) -- (0,0,1) (1,0,0) -- (1,0,1) (0,1,0) -- (0,1,1);
\end{tikzpicture}}
\newcommand{\pyramid}{\hspace{1pt}\begin{tikzpicture}[x={(1.5mm,-0.2mm)},y={(1.3mm,0.5mm)},z={(0,1.5mm)},line cap=round,line join=round]
    \draw (0,0,0) -- (1,0,0) -- (1,1,0) -- (0,1,0) -- cycle (0,0,0) -- (0,0,1) (1,0,0) -- (0,0,1) (1,1,0) -- (0,0,1) (0,1,0) -- (0,0,1);
\end{tikzpicture}}
\newcommand{\simplex}[1]{(\textup{simplex},#1)}
\newcommand{\hypercube}[1]{(\textup{cube},#1)}
\newcommand{\unctrace}[3]{\gamma^\perp(#1,#2,#3)}
\newcommand{\ctrace}[3]{\gamma^\parallel(#1,#2,#3)}
\newcommand{\transpose}{^T}
\newcommand{\invtranspose}{^{-T}}
\newcommand{\inv}{^{-1}}
\theoremstyle{thmstyletwo}%
\theoremstyle{thmstylethree}%
\newtheorem{definition}{Definition}%
\begin{document}

\title{DefElement: an encyclopedia of finite element definitions}
%%=============================================================%%
%% GivenName	-> \fnm{Joergen W.}
%% Particle	-> \spfx{van der} -> surname prefix
%% FamilyName	-> \sur{Ploeg}
%% Suffix	-> \sfx{IV}
%% \author*[1,2]{\fnm{Joergen W.} \spfx{van der} \sur{Ploeg} 
%%  \sfx{IV}}\email{iauthor@gmail.com}
%%=============================================================%%

\author*[1]{\fnm{Matthew} W. \sur{Scroggs}\orcid{0000-0002-4658-2443}}\email{matthew.scroggs.14@ucl.ac.uk}
\author[2]{\fnm{Pablo} D. \sur{Brubeck}\orcid{0000-0002-3824-0080}}\email{brubeckmarti@maths.ox.ac.uk}
\author[3]{\fnm{Joseph} P. \sur{Dean}\orcid{0000-0001-7499-3373}}\email{jpd62@cam.ac.uk}
\author[4]{\fnm{J{\o}rgen} S. \sur{Dokken}\orcid{0000-0001-6489-8858}}\email{dokken@simula.no}
\author[2]{\fnm{India} \sur{Marsden}\orcid{0009-0006-0152-4780}}\email{marsden@maths.ox.ac.uk}

\affil*[1]{\orgdiv{Advanced Research Computing Centre}, \orgname{University College London}, \orgaddress{\street{Gower Street}, \city{London}, \postcode{WC1E 6BT}, \country{United Kingdom}}}

\affil[2]{\orgdiv{Department of Mathematics}, \orgname{Oxford University}, \orgaddress{\street{Woodstock Road}, \city{Oxford}, \postcode{OX2 6GG}, \country{United Kingdom}}}

\affil[3]{\orgdiv{Department of Engineering}, \orgname{University of Cambridge}, \orgaddress{\street{Trumpington Street}, \city{Cambridge}, \postcode{CB2 1PZ}, \country{United Kingdom}}}

\affil[4]{\orgname{Simula Research Laboratory}, \orgaddress{\street{Kristian Augusts gate 23}, \city{Oslo}, \postcode{0164}, \country{Norway}}}

%%==================================%%
%% Sample for unstructured abstract %%
%%==================================%%

\abstract{
DefElement is an online encyclopedia of finite element definitions that was
created and is maintained by the authors of this paper. DefElement aims to make
information about elements defined in the literature easily available in a
standard format. There are a number of open-source finite element libraries
available, and it can be difficult to check that an implementation of an
element in a library matches the element's definition in the literature or
implementation in another library, especially when many libraries include
variants of elements whose basis functions do not match exactly. In this paper,
we carefully derive conditions under which elements can be considered
equivalent and describe an algorithm that uses these conditions to verify that two
implementations of a finite element are indeed variants of the same element.
The results of scheduled runs of our implementation of this verification
algorithm are included in the information available on the DefElement website.}

\keywords{finite element methods}

%%\pacs[JEL Classification]{D8, H51}

%%\pacs[MSC Classification]{35A01, 65L10, 65L12, 65L20, 65L70}

\maketitle

%%%%%%%%%%%%%%%%%%%%%%%%%%%%%%%%%%%%%%%%%%%%%%%%%%%%%%%%%%%%%%%%%%%%%%%%%%%%%%%%%%%%%%%%%%%%%%
\section{Introduction}\label{intro}

The finite element method has its origins in the 1940s, when methods involving the meshing of a domain began to be used by engineers \cite{courant,80years,25years}.
The name \emph{finite element method} was coined in a 1960 paper by Ray Clough \cite{clough1960}. Throughout the 1960s, significant progress was made
in analysing the finite element method.
The use of a triple to represent a finite element was introduced in 1975 in a set of lecture notes by Philippe Ciarlet \cite{CiarletLectureNotes},
before appearing in his popular 1978 book \cite{ciarlet}. This definition was first written with scalar-valued elements in mind, but it was used shortly after its introduction by
Pierre-Arnaud Raviart and Jean-Marie Thomas \cite{rt} and Jean-Claude N\'ed\'elec \cite{nedelec1} to define vector-valued H(div)-conforming and H(curl)-conforming elements.

Since its introduction, Ciarlet's triple---with some small notable adjustments---has become the widely accepted definition of a finite element, and it has been used to define a huge range of elements
designed for specific applications including
fluid dynamics \cite{guzman-neilan,rannacher-turek},
plate deformation \cite{pechstein-schoberl},
and solid mechanics \cite{serendipity}.
Details of Ciarlet's triple and an overview of different element types is presented in \cref{sec:definition}.

Often, only the lowest-order version of an element on a single cell type is introduced initially, with higher-order
versions and versions on other cell types presented in subsequent papers. This can make tracking down
appropriate elements that would be suitable for solving a particular problem a challenging task.
To simplify this task, we have created DefElement, an online encyclopedia of finite element definitions hosted
at \href{https://defelement.org}{defelement.org}\footnote{DefElement was originally hosted at \href{https://defelement.com}{defelement.com}.
This old URL now forwards to \href{https://defelement.org}{defelement.org}}.
The name DefElement is a shortening of \emph{def}initions of \emph{element}s, and partially
inspired by the use of the \texttt{def} keyword in Python \cite{python}. DefElement contains
the definitions of a large number of finite elements and useful information about each one.
Our encyclopedia was partly inspired by the Periodic Table of Finite Elements \cite{periodic-table}, which displays the elements in four common de Rham families,
and the Online Encyclopedia of Integer Sequences (OEIS) \cite{oeis}.
DefElement includes a page with information about each finite element.
As shown in \cref{fig:intro-website1} for the Raviart--Thomas element, this information includes:
\begin{itemize}
\item alternative names of the element;
\item details of the element's definition;
\item the polynomial and Lagrange subdegree and superdegree of the element, as we will define in \cref{sec:definition:degree};
\item the number of degrees of freedom (DOFs) that the element has in terms of the degree $k$;
\item details of any variants of the element, as we will define in \cref{sec:variants};
\item details of libraries in which the element is implemented.
Currently, the libraries Basix \cite{basix}, Bempp-cl \cite{bempp-cl}, FIAT \cite{FIAT}, ndelement \cite{ndelement}, and Symfem \cite{symfem} are included
\item some example low-degree elements, with functions and plots generated using Symfem.
These plots, and all other diagrams on DefElement, are licensed under a Creative Commons license, so that they can be reused
in papers and elsewhere as long as an attribution is included;
\item a list of references where the full definition of the element can be found.
\end{itemize}

\begin{figure}
\centering
\includegraphics[width=0.42\textwidth]{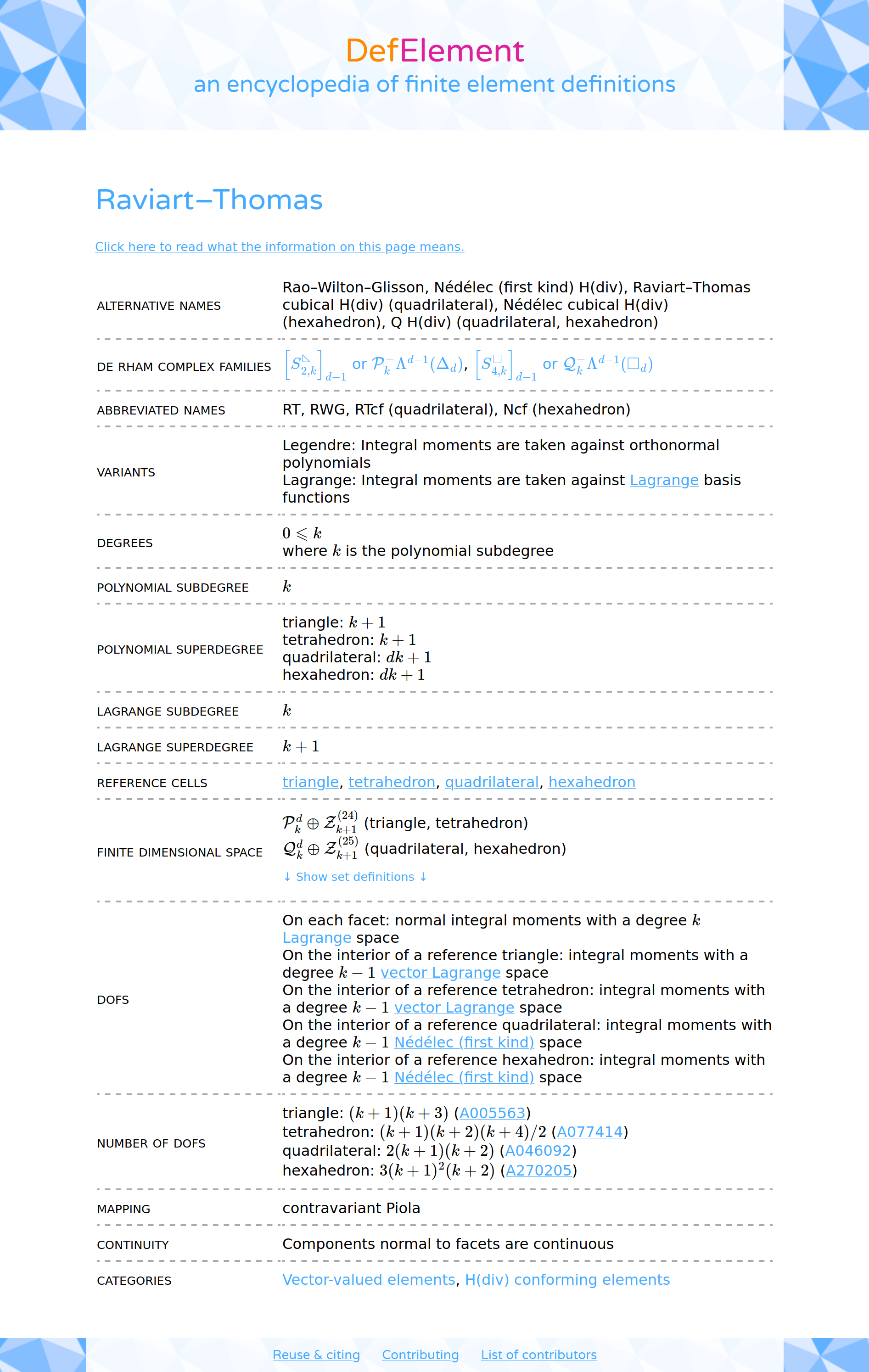}%
\hspace{1mm}%
\includegraphics[width=0.42\textwidth]{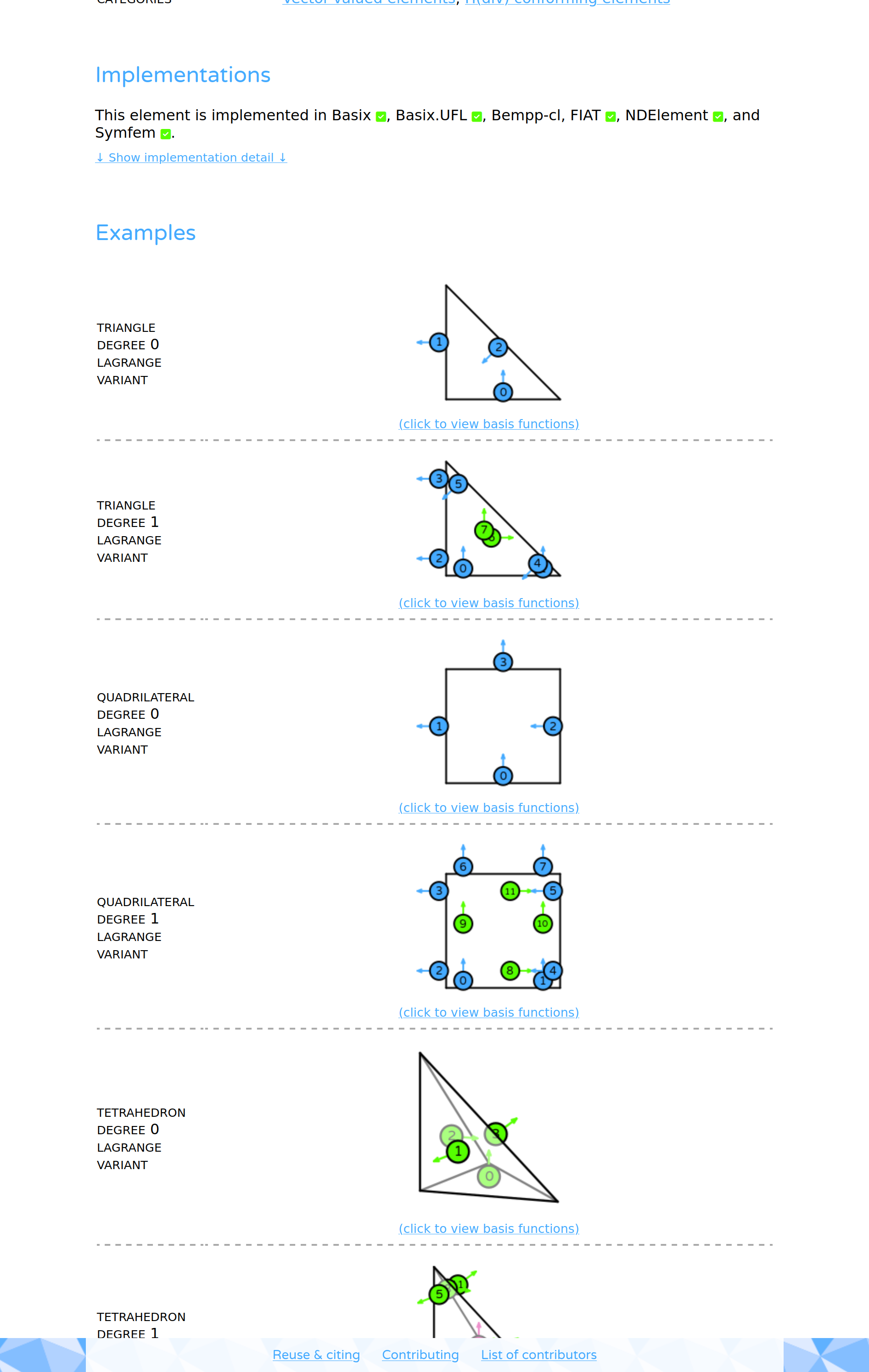}\\[2mm]
\includegraphics[width=0.42\textwidth]{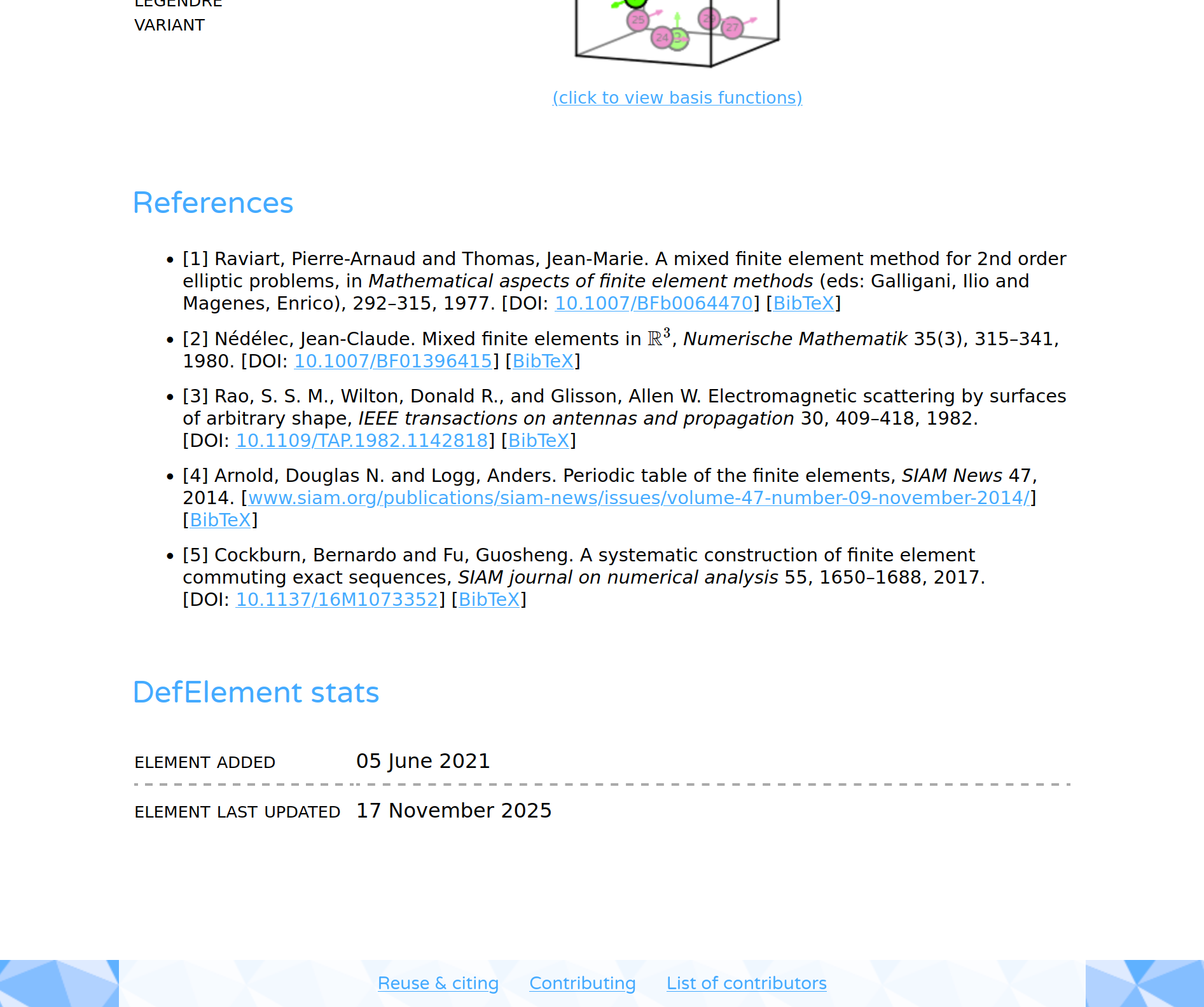}
\caption{Information about the Raviart--Thomas element, as shown on its DefElement page at \href{https://defelement.org/elements/raviart-thomas.html}{defelement.org/elements/raviart-thomas.html} (accessed 11 December 2025).
This information includes details of the element's definition (top left), implementations that include
the element and example low-degree elements (top right), and references to literature where full
details of the element can be found.
For Raviart--Thomas elements, the included references are \cite{rt,nedelec1,raviart-thomas-2,periodic-table,cockburn-fu}.}
\label{fig:intro-website1}
\end{figure}

Alongside the implementation detail, the DefElement page shows the status of DefElement's element verification---in \cref{fig:intro-website1},
this status is shown via a green tick or orange or red cross next to a library's name.
The aim of element verification is to confirm that the implementation of the element
in each library matches the definition given in the literature.
Element verification will be discussed in more detail in \cref{sec:verification}.

We encourage readers of this paper to consider contributing to DefElement by updating the information currently available
or adding new information and new elements.
In \cref{sec:editing}, we outline how contributions can be made to DefElement.
We finish with some concluding 
remarks in \cref{sec:conclusions}.

%%%%%%%%%%%%%%%%%%%%%%%%%%%%%%%%%%%%%%%%%%%%%%%%%%%%%%%%%%%%%%%%%%%%%%%%%%%%%%%%%%%%%%%%%%%%%%
\section{Defining finite elements}\label{sec:definition}

In this section, we give an overview of the definition of finite elements. First, we define the names we use for
the sub-entities of a reference cell.
These definitions are summarised for
cells of topological dimensions 0 to 4 in
\cref{table:entities}, although these could trivially be extended to higher dimensional cells.

\begin{definition}[Sub-entities]\label{def:sub-entities}
The sub-entities of a polytope $\refel\subset\mathbb{R}^d$
of dimension 0, 1, 2 and 3 are called
vertices, edges, faces, and volumes (respectively).
The \emph{codimension} of a sub-entity is computed by subtracting the dimension of the sub-entity from the topological dimension $d$.
Sub-entities of codimension 0, 1, 2 and 3 are called \emph{the cell}, \emph{facets}, \emph{ridges} and \emph{peaks} (respectively).

For $k>3$, we do not introduce specific names for $k$-dimensional and $k$-codimensional sub-entities.
\end{definition}

\begin{table}
\renewcommand{\arraystretch}{1.2}
\begin{tabular}{c|ccccc}
\multirow{2}{20mm}{\centering Topological dimension}&
\multicolumn{4}{c}{Entities by dimension}\\
&0&1&2&3&4\\\hline
0 \footnotesize(a vertex)
&the cell&-&-&-&-\\
1 \footnotesize(an interval)
&vertices / facets&the cell&-&-&-\\
2 \footnotesize(a polygon)
&vertices / ridges&edges / facets&the cell&-&-\\
3 \footnotesize(a polyhedron)
&vertices / peaks&edges / ridges&faces / facets&the cell&-\\
4
&vertices&edges / peaks&faces / ridges&volumes / facets&the cell\\
\end{tabular}

\vspace{5mm}

\begin{tabular}{c|ccccc}
\multirow{2}{20mm}{\centering Topological dimension}&
\multicolumn{4}{c}{Entities by codimension}\\
&0&1&2&3&4\\\hline
0 \footnotesize(a vertex)
&the cell&-&-&-&-\\
1 \footnotesize(an interval)
&the cell&vertices / facets&-&-&-\\
2 \footnotesize(a polygon)
&the cell&edges / facets&vertices / ridges&-&-\\
3 \footnotesize(a polyhedron)
&the cell&faces / facets&edges / ridges&vertices / peaks&-\\
4
&the cell&volumes / facets&faces / ridges&edges / peaks&vertices\\
\end{tabular}
\vspace{2mm}
\caption{The entities of cells with topological dimensions 0 to 4.}
\label{table:entities}
\end{table}

In the most general terms, a finite element can be defined as follows.

\begin{definition}[Direct finite element]\label{def:direct}
A (direct) finite element is defined by the triple
$(\cell,\polyspace,\dualbasis)$, where
\begin{itemize}
  \item $\cell\subset\mathbb{R}^{d_g}$ is a $d$-dimensional cell in a mesh;
  \item $\polyspace$ is a finite dimensional space on $\cell$
  of dimension $n$, usually a space of polynomials;
  \item $\dualbasis :=\{l_0,\dots,l_{n-1}\}$ is a basis of the dual space
  $\dual{\polyspace} : =\{f:\polyspace\to\mathbb{R}\,|\,f\text{ is linear}\}$.
   Each functional $l_i$ is associated with a sub-entity of $\cell$. A functional
  $f\in\dualbasis$ associated with a sub-entity $\subentity\subseteq\cell$ only depends on a function's value
  restricted to $\subentity$, i.e.~if $\left.p\right|_{\subentity}=\left.q\right|_{\subentity}$ then $f(p)=f(q)$.
\end{itemize}

The basis functions $\{\phi_0,\dots,\phi_{n-1}\}$ of the finite element are
defined by
\[
  l_i(\phi_j)=\delta_{ij}:=\begin{cases}
  1&i=j,\\
  0&i\not=j.
\end{cases}
\]
\end{definition}

Using this definition, a different finite element can be defined for each physical cell in the mesh.
This allows for a wide range of approaches, such as $p$-refinement where different polynomial degrees
are used in different parts of the mesh.

When the element type and polynomial degree are the same for each cell, it is common to define
a single finite element on a reference cell and use a map to transform basis functions from the
reference cell to each physical cell in the mesh. A \emph{reference-mapped} finite element
that is defined for use in this case is defined as follows. In the literature, these are
often referred to as Ciarlet elements, due to their definition first appearing in
a set of lecture notes \cite{CiarletLectureNotes} and book \cite{ciarlet} by Philippe Ciarlet.

\begin{definition}[Reference-mapped finite element]\label{def:ciarlet}
A reference-mapped finite element is defined by the triple
$(\refel,\polyspace,\dualbasis)$, where
\begin{itemize}
  \item $\refel\subset\mathbb{R}^d$ is the reference cell, usually an interval, polygon or polyhedron;
  \item $\polyspace$ is a finite dimensional space on $\refel$
  of dimension $n$, usually a space of polynomials;
  \item $\dualbasis :=\{l_0,\dots,l_{n-1}\}$ is a basis of the dual space
  $\dual{\polyspace} : =\{f:\polyspace\to\mathbb{R}\,|\,f\text{ is linear}\}$.
  As in \cref{def:direct}, each functional $l_i$ is associated with a sub-entity of the reference cell $\refel$, with the value of any functional 
  $f\in\dualbasis$ associated with a sub-entity $\subentity\subseteq\refel$ only depending on a function's value
  restricted to $\subentity$.

\end{itemize}
The reference basis functions $\{\phi_0,\dots,\phi_{n-1}\}$ of the finite element are
defined by
\[
  l_i(\phi_j)=\delta_{ij}.
\]
\end{definition}

In \cref{def:sub-entities,def:ciarlet}, the value $d$ is called the topological dimension. This may differ from the geometric dimension $d_g$ if, for example,
a mesh of triangular cells is embedded in three-dimensional space. The \emph{value shape} of a finite element is defined
to be the shape of the range space of the polynomials in $\polyspace$. For example,
if $\polyspace$ is a set of vector-valued functions in $\{f\,|\,f:\refel\to\mathbb{R}^3\}$ then the value shape is $(3)$;
if $\polyspace$ is a set of matrix-valued functions in $\{f\,|\,f:\refel\to\mathbb{R}^{2\times2}\}$ then the value shape is $(2,2)$.
The \emph{value size} of a finite element is defined to be the product of all the entries of the value shape, and equal to 1
if the element is scalar-valued.

For reference-mapped elements, a geometric map $g$ is defined that maps points on the reference cell
to points on a physical cell and
a push forward map is defined that maps the basis functions on the
reference cell to functions on a physical cell
\cite{rognes:2009,mapping2,mapping3}. The push forward map for an element is
chosen so that it preserves specific properties of the element's basis functions.
The inverse map that maps function values from a physical cell to the reference cell is called the pull back map.
For some elements (such as serendipity \cite{serendipity}), better
convergence is achieved if the geometric map is affine, as defined as follows.

\begin{definition}[affine transformation]
Let $g:\refel\to\mathbb{R}^{d_g}$ be a geometric map.
$g$ is affine if there exist
a translation $g_1:\refel\to\mathbb{R}^d$
and a linear transformation $g_2:\mathbb{R}^d\to\mathbb{R}^{d_g}$ such that
$g=g_2\circ g_1$.
\end{definition}

The functionals $l_i \in \dualbasis$ are the (local) degrees-of-freedom (DOFs) of the finite
element (note that we enumerate functionals and basis functions starting at 0).
When a finite element function space is defined on a mesh, we associate a global DOF number
with each local DOF on each cell. To ensure that the mapped
space has the desired continuity properties, corresponding local DOFs on neighbouring cells
that are associated with a shared sub-entity are assigned the same global DOF.

We note that associating each functional with a sub-entity of the reference cell was not included
in Ciarlet's original definition of a finite element, but it is now common to include this as
this association is an important part of most finite element implementations, as it allows for
continuity between cells to be naturally implemented.
For continuity to be effectively imposed in this way, it is common to make some additional assumptions about the finite element.
Before stating these assumptions, we first define equivalent sets of functionals.

\begin{definition}[equivalent sets of functionals]\label{def:equivalent}
Let $\dualbasis=\{l_0,\dots,l_{a-1}\}$ and $\tilde\dualbasis=\{\tilde{l}_0,\dots,\tilde{l}_{b-1}\}$ be two sets of functionals associated with entities $\subentity$ and $\tilde{\subentity}$.
Let $\{\phi_0,\dots,\phi_{a-1}\}$ and $\{\tilde{\phi}_0,\dots,\tilde{\phi}_{b-1}\}$ be the finite element basis functions corresponding to the functionals in $\dualbasis$ and $\tilde{\dualbasis}$.
The sets of functionals $\dualbasis$ and $\tilde\dualbasis$ are equivalent if:
\begin{itemize}
\item they have the same size (i.e.~$a=b$);
\item there exists a push forward map $\mathcal{F}$ such that for all $i=0,\dots,a-1$,
$$\tilde{l}_i(\mathcal{F}\phi_j)=\delta_{ij}.$$
\end{itemize}
\end{definition}

For sets of functionals associated with the same sub-entity, the following lemma will be useful.

\begin{lemma}\label{lemma:equivalent}
Let $\dualbasis=\{l_0,\dots,l_{a-1}\}$ and $\tilde\dualbasis=\{\tilde{l}_0,\dots,\tilde{l}_{a-1}\}$ be two sets of functionals associated with entity $\subentity$.
If \(\operatorname{span}\dualbasis=\operatorname{span}\tilde{\dualbasis}\), then $\dualbasis$ and $\tilde{\dualbasis}$ are equivalent sets of functionals.
\end{lemma}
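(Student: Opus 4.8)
The plan is to reduce the statement to the construction of a single explicit push forward map and to build that map from the change-of-basis matrix relating $\dualbasis$ and $\tilde\dualbasis$. The first condition of \cref{def:equivalent} is immediate, since by hypothesis both sets have exactly $a$ elements, so everything rests on exhibiting a map $\mathcal{F}$ with $\tilde{l}_i(\mathcal{F}\phi_j)=\delta_{ij}$. I would begin by observing that the hypothesis $\operatorname{span}\dualbasis=\operatorname{span}\tilde\dualbasis$, together with the fact that $\dualbasis$ is a basis of $\dual{\polyspace}$ (which is what makes the $\phi_j$ well defined via \cref{def:ciarlet}), forces $\tilde\dualbasis$ to be a basis of the same space: it is a spanning set of the correct cardinality. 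Consequently both $\{\phi_m\}$ and $\{\tilde\phi_m\}$ are bases of $\polyspace$, and there is a unique invertible matrix $\mat{M}=(M_{ik})$ with $\tilde{l}_i=\sum_k M_{ik}\,l_k$.

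With this in hand, the linear algebra is routine. Pairing the expansion against $\phi_j$ and using $l_k(\phi_j)=\delta_{kj}$ gives $\tilde{l}_i(\phi_j)=M_{ij}$, so $\mat{M}$ records exactly the "wrong" values produced when the functionals of $\tilde\dualbasis$ act on the dual basis of $\dualbasis$. Expanding the functions $\tilde\phi_j$ in the basis $\{\phi_m\}$ and imposing $\tilde{l}_i(\tilde\phi_j)=\delta_{ij}$ then yields $\tilde\phi_j=\sum_m (\mat{M}\inv)_{mj}\,\phi_m$. I would therefore define $\mathcal{F}$ to be the unique linear map on the function space with $\mathcal{F}\phi_j:=\sum_m (\mat{M}\inv)_{mj}\phi_m=\tilde\phi_j$; this is well defined and invertible precisely because the hypothesis guarantees $\mat{M}$ is invertible. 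Taking this $\mathcal{F}$ gives $\tilde{l}_i(\mathcal{F}\phi_j)=\tilde{l}_i(\tilde\phi_j)=\delta_{ij}$ for all $i,j$, which is the second condition of \cref{def:equivalent}, and the two sets are equivalent.

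The genuinely delicate step is the last one: verifying that the linear isomorphism $\mathcal{F}$ sending each $\phi_j$ to $\tilde\phi_j$ is admissible as a push forward map rather than merely an abstract change of basis. I expect this to be the main obstacle, and I would justify it by appealing to the fact that $\dualbasis$ and $\tilde\dualbasis$ are associated with the \emph{same} sub-entity $\subentity$, so the geometric map between the two copies of $\subentity$ may be taken to be the identity and no nontrivial geometric transformation is forced on the range space; the freedom that remains is exactly the freedom to compose with a linear transformation of this kind. Once that admissibility is granted, the role of the hypothesis becomes transparent: $\operatorname{span}\dualbasis=\operatorname{span}\tilde\dualbasis$ is precisely the condition that makes $\mat{M}$ invertible, and hence the only thing needed to produce a valid $\mathcal{F}$.
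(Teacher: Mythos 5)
Your construction of $\mathcal{F}$ is in fact the same as the paper's: the paper writes $\tilde{l}_i=\sum_j A_{ij}l_j$ and sets $\mathcal{F}\phi_i=\sum_j(\mat{A}^{-\mathsf{T}})_{ij}\phi_j$, which is exactly your $\mathcal{F}\phi_j=\sum_m(\mat{M}\inv)_{mj}\phi_m$. However, the route you take to verify $\tilde{l}_i(\mathcal{F}\phi_j)=\delta_{ij}$ contains a genuine flaw: the identification $\mathcal{F}\phi_j=\tilde\phi_j$. You obtain it by ``expanding $\tilde\phi_j$ in the basis $\{\phi_m\}$'', but $\{\phi_0,\dots,\phi_{a-1}\}$ is not a basis of $\polyspace$, and $\dualbasis$ is not a basis of $\dual{\polyspace}$ (contrary to your opening observation): in this lemma $\dualbasis$ is only the subset of functionals attached to the single entity $\subentity$, and $\{\phi_0,\dots,\phi_{a-1}\}$ is the corresponding subset of basis functions. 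There is no reason for $\tilde\phi_j$ to lie in $\operatorname{span}\{\phi_0,\dots,\phi_{a-1}\}$: the functions $\tilde\phi_j$ are dual basis functions of a \emph{different} full element, determined by duality against \emph{all} of that element's functionals, and the hypothesis says nothing about the functionals associated with the other entities, which may differ arbitrarily between the two elements. For a concrete failure, take two elements whose functionals on $\subentity$ are literally identical (so $\mat{M}=\mat{I}$ and $\mathcal{F}\phi_j=\phi_j$) but whose interior functionals differ: then $\tilde\phi_j\neq\phi_j$ in general, and $\tilde\phi_j$ has a component outside $\operatorname{span}\{\phi_0,\dots,\phi_{a-1}\}$. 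So the chain $\tilde{l}_i(\mathcal{F}\phi_j)=\tilde{l}_i(\tilde\phi_j)=\delta_{ij}$ is not justified as written.

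The gap is local and easily repaired, because the detour through $\tilde\phi_j$ is unnecessary. From $\tilde{l}_i=\sum_k M_{ik}l_k$ and $l_k(\phi_m)=\delta_{km}$ you already established $\tilde{l}_i(\phi_m)=M_{im}$, hence directly
\[
\tilde{l}_i(\mathcal{F}\phi_j)=\sum_m(\mat{M}\inv)_{mj}\,\tilde{l}_i(\phi_m)=\sum_m M_{im}(\mat{M}\inv)_{mj}=\left(\mat{M}\mat{M}\inv\right)_{ij}=\delta_{ij},
\]
which is precisely the paper's computation; note that the paper's proof never mentions $\tilde\phi_j$ at all. Your final concern about whether this linear map is admissible as a push forward is not something the paper addresses either --- since both sets of functionals live on the same entity, the paper simply takes the map defined by $\mat{A}^{-\mathsf{T}}$ as the push forward --- and your remaining steps (existence, uniqueness and invertibility of $\mat{M}$, which follow from linear independence of $\dualbasis$ and $\tilde\dualbasis$ as subsets of dual bases together with the span hypothesis and equal cardinality) are sound.
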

\begin{proof}
As \(\operatorname{span}\dualbasis=\operatorname{span}\tilde{\dualbasis}\), there exists a non-singular matrix $\mat{A}\in\mathbb{R}^{a\times a}$ such that
\begin{equation}\label{eq:defA}
\begin{pmatrix}\tilde{l}_0\\\vdots\\\tilde{l}_{a-1}\end{pmatrix}
=\mat{A}
\begin{pmatrix}l_0\\\vdots\\l_{a-1}\end{pmatrix}.
\end{equation}
Note that as $\dualbasis$ and $\tilde\dualbasis$ are not necessarily orthogonal bases,
$\mat{A}$ is not necessarily an orthogonal matrix.

Let $\{\phi_0,\dots,\phi_{a-1}\}$ be the finite element basis functions corresponding to the functionals in $\dualbasis$.
The aim is to show that the second bullet point in \cref{def:equivalent} is true: we will see that
this holds if we define $\mathcal{F}$ as
\[
\begin{pmatrix}\mathcal{F}\phi_0\\\vdots\\\mathcal{F}\phi_{a-1}\end{pmatrix}
=\mat{A}^{-\mathsf{T}}
\begin{pmatrix}\phi_0\\\vdots\\\phi_{a-1}\end{pmatrix}.
\]
As the functionals $\tilde{l}_i$ are linear, we can write
\begin{equation}
\begin{pmatrix}
\tilde{l}_0(\mathcal{F}\phi_0)&\dots&\tilde{l}_{a-1}(\mathcal{F}\phi_0)\\
\vdots&\ddots&\vdots\\
\tilde{l}_0(\mathcal{F}\phi_{a-1})&\dots&\tilde{l}_{a-1}(\mathcal{F}\phi_{a-1})\\
\end{pmatrix}
=
\mat{A}^{-\mathsf{T}}
\begin{pmatrix}
\tilde{l}_0(\phi_0)&\dots&\tilde{l}_{a-1}(\phi_0)\\
\vdots&\ddots&\vdots\\
\tilde{l}_0(\phi_{a-1})&\dots&\tilde{l}_{a-1}(\phi_{a-1})\\
\end{pmatrix}.\label{eq:first_to_combine}
\end{equation}
Using \cref{eq:defA}, we see that
\begin{equation}
\begin{pmatrix}
\tilde{l}_0(\phi_0)&\dots&\tilde{l}_{a-1}(\phi_0)\\
\vdots&\ddots&\vdots\\
\tilde{l}_0(\phi_{a-1})&\dots&\tilde{l}_{a-1}(\phi_{a-1})\\
\end{pmatrix}
=
\begin{pmatrix}
l_0(\phi_0)&\dots&l_{a-1}(\phi_0)\\
\vdots&\ddots&\vdots\\
l_0(\phi_{a-1})&\dots&l_{a-1}(\phi_{a-1})\\
\end{pmatrix}
\mat{A}^\mathsf{T}.\label{eq:second_to_combine}
\end{equation}
Combining \cref{eq:first_to_combine,eq:second_to_combine}, we see that
\begin{align*}
\begin{pmatrix}
\tilde{l}_0(\mathcal{F}\phi_0)&\dots&\tilde{l}_{a-1}(\mathcal{F}\phi_0)\\
\vdots&\ddots&\vdots\\
\tilde{l}_0(\mathcal{F}\phi_{a-1})&\dots&\tilde{l}_{a-1}(\mathcal{F}\phi_{a-1})\\
\end{pmatrix}
&=
\mat{A}^{-\mathsf{T}}
\begin{pmatrix}
l_0(\phi_0)&\dots&l_{a-1}(\phi_0)\\
\vdots&\ddots&\vdots\\
l_0(\phi_{a-1})&\dots&l_{a-1}(\phi_{a-1})\\
\end{pmatrix}
\mat{A}^\mathsf{T}
\\&=
\mat{A}^{-\mathsf{T}}
\mat{I}
\mat{A}^\mathsf{T}
\\&=
\mat{A}^{-\mathsf{T}}
\mat{A}^\mathsf{T}
\\&=
\mat{I}.
\end{align*}
\end{proof}

We now present the additional assumptions about reference-mapped finite elements that are commonly made to allow the elements
to be mapped from the reference cell to every physical cell in an unstructured mesh.
First, the following assumption is often made:

\begin{assumption}\label{a:equivalent}
Each sub-entity of the same type (e.g.~each sub-entity that is a triangle) has an equivalent set of functionals associated with it.
\end{assumption}

This assumption allows neighbouring cells to be attached by any sub-entity of the same type,
as each sub-entity of the given type will have an equivalent set of DOFs.
\Cref{a:equivalent} is true of the vast majority of finite elements, but it is not true of some such as
transition elements, where different polynomial degrees are used on different sub-entities,
and Fortin--Soulie elements \cite{fortin-soulie}, where there are two DOFs associates with two edges
of the reference triangle but only one DOF associated with the third edge.

Note that in some finite element libraries---in particular those that use $p$- or $hp$-refinement
or hierarchical basis functions---functionals can be associated with sub-entities where no corresponding
functional is associated in a neighbouring cell. Where the functionals in question are point
evaluations, these are referred to as \emph{hanging nodes}. In such cases, continuity must be
enforced in a less trivial manner, and \cref{a:equivalent} cannot be made.

A second common assumption is as follows:

\begin{assumption}\label{a:orientation}
Let $\subentity$ be a sub-entity of $\refel$, let $g:E\to E$ be an affine bijection, and
$\mathcal{F}_g$ be the push forward corresponding to $g$.
If $\phi_a,\dots,\phi_b$ are the basis functions associated with the sub-entity $\subentity$, then
$$
\operatorname{span}\left\{\mathcal{F}_g(\left.\phi_a\right|_{\subentity}),\dots,\mathcal{F}_g(\left.\phi_b\right|_{\subentity})\right\} =
\operatorname{span}\left\{\left.\phi_a\right|_{\subentity},\dots,\left.\phi_b\right|_{\subentity}\right\},
$$
where $|_{\subentity}$ denotes the restriction of a function to the sub-entity $\subentity$.
\end{assumption}

This assumption allows neighbouring cells to be attached in any orientation, as 
any map that corresponds to a rotation or reflection of a sub-entity is affine and a bijection \cite[lemma 20.6, exercise 20.1]{ern-guermond},
and so \cref{a:orientation} tells us that the space that needs to be continuous between cells
is not affected by rotating or reflecting a sub-entity. Note, however, that the differences in the orientation of neighbouring cells
must be taken into account in implementations of finite elements on unordered meshes \cite{2022-dofs}.

In software implementations, it is common to compute basis functions by first computing the dual matrix \cite[section 2.2]{FIAT}
$$
\mat{D} = \begin{pmatrix}
l_0(p_0)&\dots&l_{n-1}(p_0)\\
\vdots&\ddots&\vdots\\
l_0(p_{n-1})&\dots&l_{n-1}(p_{n-1})
\end{pmatrix},
$$
where $\{p_0,\dots,p_{n-1}\}$ is any basis of $\polyspace$ (often chosen to be an orthogonal basis).
Multiplying by the vector of basis functions, we see that
\[
\mat{D}\begin{pmatrix}
\phi_0\\\vdots\\\phi_{n-1}
\end{pmatrix}
=
\begin{pmatrix}
\displaystyle
\sum_{i=0}^{n-1}l_i(p_0)\phi_i
\\\vdots\\
\displaystyle
\sum_{i=0}^{n-1}l_i(p_{n-1})\phi_i
\end{pmatrix}
=
\begin{pmatrix}
p_0\\\vdots\\p_{n-1}
\end{pmatrix},
\]
and so the basis functions can be computed via
\begin{equation}
\begin{pmatrix}
\phi_0\\\vdots\\\phi_{n-1}
\end{pmatrix}
=
\mat{D}\inv
\begin{pmatrix}
p_0\\\vdots\\p_{n-1}
\end{pmatrix}.\label{eq:dual-inv}
\end{equation}
Note that if the functions are not being computed symbolically, the values of $p_0(\vec{x}),\dots,p_{n-1}(\vec{x})$ at
a given point $\vec{x}\in\refel$ can be used on the right-hand side of \cref{eq:dual-inv} to compute the values of the basis functions at $\vec{x}$.

Note that if the basis $\{p_0,\dots,p_{n-1}\}$ was chosen to be the set of monomials
and the functionals $l_0,\dots,l_{n-1}$ were all point evaluations, then $\mat{D}$
would be a Vandermonde matrix with an ill-conditioned inverse \cite{vandermonde}.
The matrix $\mat{D}$ for a general finite element is sometimes called the generalised
Vandermonde matrix \cite[chapter 5]{ern-guermond}.

We define conforming and nonconforming finite elements as follows.

\begin{definition}[conforming and nonconforming]
Let $V$ be a function space and let $M_h$ be a mesh of the domain of $V$.
A finite element $(\refel, \polyspace, \dualbasis)$ is $V$-conforming if the basis functions of the element on $M_h$ are in $V$.
A finite element $(\refel, \polyspace, \dualbasis)$ is called $V$-nonconforming if it is not $V$-conforming.
\end{definition}

%%%%%%%%%%%%%%%%%%%%%%%%%%%%%%%%%%%%%%%%%%%%%%%%%%%%%%%%%%%%%%%%%%%%%%%%%%%%%%%%%%%%%%%%%%%%%%
\subsection{Element types}\label{sec:definition:types}
In this section, we present a brief overview of commonly used types of finite element.
%%%%%%%%%%%%%%%%%%%%%%%%%%%%%%%%%%%%%%%%%%%%%%%%%%%%%%%%%%%%%%%%%%%%%%%%%%%%%%%%%%%%%%%%%%%%%%
\subsubsection{Scalar-valued elements}
Scalar-valued elements are most commonly used to discretise the $H^1$ Sobolev space, defined
for $\Omega\subset\mathbb{R}^d$ by
\[
H^1(\Omega):=\left\{f\in L^2(\Omega)\,\middle|\,
\frac{\partial f}{\partial x_0},\dots,
\frac{\partial f}{\partial x_{d-1}}
\in L^2(\Omega)\right\},
\]
where \(x_0=x\), \(x_1=y\), \(x_2=z\), and
\[
L^2(\Omega):=H^0(\Omega):=\left\{f:\Omega\to\mathbb{R}\,\middle|\,\int_\Omega f^2 <\infty\right\}.
\]

The scalar-valued Lagrange element is probably the most
widely used element and is the first element introduced in most finite element courses. The polynomial space $\polyspace$ of a Lagrange element
on a simplex cell is a complete polynomial space (the polynomial spaces for other cells are later defined in \cref{def:polyspaces}),
and its dual basis $\dualbasis$ is a set of point evaluations.
On quadrilateral and hexahedral cells, serendipity \cite{serendipity} elements are a second widely-used $H^1$-conforming finite element: their polynomial spaces
$\polyspace$ are a smaller set of polynomials than a Lagrange element and their dual bases $\dualbasis$ contain point evaluations at vertices and integral moments on
other sub-entities. Serendipity elements achieve the same accuracy as Lagrange elements on affine cells, but their accuracy degrades on non-affine cells \cite{abf-approx}.

By including derivatives in the definitions of functionals, we can enforce continuity on some derivatives of an element and create elements for
discretising $H^2$ and higher-order spaces such as the Argyris \cite{argyris}, Bell \cite{bell}, Hermite \cite{hermite}, Morley \cite{morley}, Morley--Wang--Xu \cite{morley-wang-xu} and Wu--Xu elements \cite{wu-xu}.
Note that in some older finite element literature (e.g.~\cite{ciarlet}), Hermite was used as the name for \emph{all} finite elements with at least one derivative
included in their functionals whereas it is now widely used as the name for one specific element.

By associating all the DOFs of an element with the interior of the cell, we can create an element that is discontinuous between cells that can be used to
discretise $L^2$.
The discontinuous Galerkin (DG) finite element method \cite{arnold-dg,Reed1973Triangular} is often based on use of the discontinuous Lagrange element.
A number of elements can also be created that are $H^1$-nonconforming---they are neither fully discontinuous nor fully $C^0$ continuous. The most widely used of these is probably the
Crouzeix--Raviart \cite{crouzeix-raviart} element alongside its extensions to higher polynomial degrees and quadrilaterals \cite{crouzeix-falk,fortin-soulie,rannacher-turek}.

$H^1$-conforming scalar-valued elements typically use the identity push-forward map, also known as the push-forward by the geometric transformation. This is defined, for a scalar-valued function $\phi$, by
$$\left(\mapping^\text{id}\phi\right)(\vec{x})
:=\phi(\geometrymap\inv(\vec{x})),$$
where $\geometrymap:\refel\to\mathbb{R}^{d_g}$ is the geometric map from the reference cell $\refel\subset\mathbb{R}^d$ to a physical cell.
The term \(\geometrymap\inv(\vec{x})\) is the point on the reference cell corresponding to the point \(\vec{x}\), so this mapping
maps a value of the function on the reference cell to the same value at the corresponding point.

$L^2$-conforming elements---such as discontinuous Lagrange elements---use the $L^2$ Piola push-forward that is defined, for a scalar-valued function $\phi$, by
$$\left(\mapping^\text{L2}\phi\right)(\vec{x})
:=\frac1{\det \jacobian}\phi(\geometrymap\inv(\vec{x})),$$
where $\jacobian$ is the Jacobian of the transformation $\geometrymap$.
When $\geometrymap$ is affine, $\det\jacobian$ will be constant and the identity push forward
can be used (and is often used) instead of the $L^2$ Piola push forward.
However, if $\geometrymap$ is not affine, then in some applications it is necessary to use the $L^2$ Piola map
so that the basis functions maintain properties of the de Rham complex (see \cref{sec:definition:complexes}).

The mapping of elements involving derivatives in their functionals from the reference cell to a physical cell is a more
involved process \cite{mapping3}, whose detailed discussion is beyond the scope of this paper.

%%%%%%%%%%%%%%%%%%%%%%%%%%%%%%%%%%%%%%%%%%%%%%%%%%%%%%%%%%%%%%%%%%%%%%%%%%%%%%%%%%%%%%%%%%%%%%
\subsubsection{Vector-valued elements}
Many problems, including those in electromagnetics \cite{monk-book} and fluids \cite{fluids-book},
involve vectors. For such problems, vector-valued finite elements can be used. Vector-valued
elements can be created by using a separate scalar-valued element for each component of the vector:
such elements are typically used to represent the geometry of a mesh, with each component of the
position of a cell typically represented by a Lagrange element. Using higher-degree elements in
the representation of geometry allows curved cells to be represented.

Vector-valued elements built from multiple copies of a scalar-valued element copy the continuity
of the scalar element---for most scalar-valued elements, this leads to vector functions whose
components are all continuous across boundaries between cells. In some cases---for example, when
representing geometry---this $C^0$ continuity is desired, but in other cases, the discontinuity of
some components is more appropriate.

The spaces $H(\sdiv)$ and $H(\vcurl)$
\cite[chapter 4]{ern-guermond}
are defined for $\Omega\subset\mathbb{R}^d$ by
\begin{align*}
H(\sdiv, \Omega) &:=\left\{\vec{f}\in \left(L^2(\Omega)\right)^d\,\middle|\,\sdiv(\vec{f})\in L^2(\Omega)\right\},\\
H(\scurl, \Omega) &:=\left\{\vec{f}\in \left(L^2(\Omega)\right)^d\,\middle|\,\scurl(\vec{f})\in\tilde{L}\right\},
\end{align*}
where $$\tilde{L}:=\begin{cases}\left(L^2(\Omega)\right)^3&\text{if }d=3,\\L^2(\Omega)&\text{if }d=2,\end{cases}$$
and for $d=2$, $\scurl$ is defined by
$$\scurl\left(\begin{pmatrix}f_0\\f_1\end{pmatrix}\right)
=
\frac{\partial f_1}{\partial x} -\frac{\partial f_0}{\partial y}.$$
When discretising $H(\sdiv)$, we only require that components of the vector
normal to each facet are continuous between cells.
Raviart--Thomas \cite{rt}
and Brezzi--Douglas--Marini \cite{bdm} elements are the most widely used $H(\sdiv)$-conforming elements. To enforce
the appropriate continuity, the DOFs associated with each facet are integral moments against vector functions
normal to the faces.
When discretising $H(\vcurl)$, we only require that the components of the vector
tangent to edges and faces are continuous.
N\'edelec first kind \cite{nedelec1} and second kind \cite{nedelec2} elements
are the most widely used $H(\vcurl)$-conforming elements. To enforce the appropriate continuity, the DOFs associated with each
edge and face are integral moments against vector functions tangential to the sub-entity.

$H(\sdiv)$-conforming and $H(\vcurl)$-conforming elements
use the contravariant Piola ($\mapping^{\sdiv}$) and covariant Piola ($\mapping^{\vcurl}$) push-forward
maps (respectively). These are defined by
\begin{align*}
\left(\mapping^{\sdiv}\vec{\phi}\right)(\vec{x})
&:=\frac1{\det\jacobian}\jacobian\vec{\phi}(\geometrymap\inv(\vec{x})),\\
\left(\mapping^{\vcurl}\vec{\phi}\right)(\vec{x})
&:=\jacobian\invtranspose\vec{\phi}(\geometrymap\inv(\vec{x})).
\end{align*}
These maps preserve the continuity of normal and tangential components of $H(\sdiv)$-conforming and
$H(\vcurl)$-conforming elements (respectively).

%%%%%%%%%%%%%%%%%%%%%%%%%%%%%%%%%%%%%%%%%%%%%%%%%%%%%%%%%%%%%%%%%%%%%%%%%%%%%%%%%%%%%%%%%%%%%%
\subsubsection{Matrix-valued elements}
Some problems involve matrix-valued unknowns, such as the strain and stress in elasticity \cite{regge-1}.
For such problems, matrix-valued finite elements can be defined.
As in the vector case, these can be created be using a scalar-valued element for each
entry of the matrix. Depending on the application, symmetric or non-symmetric matrices may be
desired for the unknown: both cases can easily be created from scalar-valued element by either
using a seperate element for each entry, or by using fewer elements and setting the appropriate
matrix entries to be equal.

As in the vector-valued case, for many applications we desire elements with weaker continuity than the
$C^0$ continuity usually obtained from the scalar element approach.
For any matrix-valued function $\mat{F}$, we define
$\sdiv(\mat{F})$ to be the vector containing the results of applying $\sdiv$ to each row of $\mat{F}$.
If $d=2$, we define $\scurl(\mat{F})$ to be the vector containing the results of applying $\scurl$ to each row of $\mat{F}$;
if $d=3$, we define $\scurl(\mat{F})$ to be the matrix where the $i$th column is the result of applying $\scurl$ to the $i$th row of $\mat{F}$.
We can now define the appropriate
Sobolev spaces for matrix-valued elements:
\newcommand{\curlcurllabel}{2}
\newcommand{\curldivlabel}{1}
$H(\sdiv\,\sdiv)$,
$H(\vcurl\,\vcurl)$ and
$H(\vcurl\,\sdiv)$ \cite{formulae-and-transformations,gopalakrishnan-lederer-schoberl}. These are defined for $\Omega\subset\mathbb{R}^d$ by
\begin{align*}
H(\sdiv\,\sdiv, \Omega)&:=\left\{
    \mat{F}\in\left(L^2(\Omega)\right)^{d\times d}
    \,\middle|\,
    \mat{F}\transpose=\mat{F}
    \text{ and }
    \sdiv(\sdiv(\mat{F}))\in\left(H^1_0(\Omega)\right)^*
\right\},\\
H(\vcurl\,\vcurl, \Omega)&:=\left\{
    \mat{F}\in\left(L^2(\Omega)\right)^{d\times d}
    \,\middle|\,
    \mat{F}\transpose=\mat{F}
    \text{ and }
    \scurl(\scurl(\mat{F}))\in\left(\tilde{H}_{\curlcurllabel}\right)^*
\right\},\\
H(\vcurl\,\sdiv, \Omega)&:=\left\{
    \mat{F}\in\left(L^2(\Omega)\right)^{d\times d}
    \,\middle|\,
    \scurl(\sdiv(\mat{F}))\in\left(\tilde{H}_{\curldivlabel}\right)^*
\right\},
\end{align*}
where
\begin{align*}
H^1_0(\Omega)&:=\left\{f\in H^1(\Omega)\,\middle|\,f=0\text{ on }\partial\Omega\right\},\\
\tilde{H}_{\curldivlabel}&:=\begin{cases}\left(H^1_0(\Omega)\right)^3&\text{if }d=3,\\H^1_0(\Omega)&\text{if }d=2,\end{cases}\\
\tilde{H}_{\curlcurllabel}&:=\begin{cases}\left(H^1_0(\Omega)\right)^{3\times3}&\text{if }d=3,\\H^1_0(\Omega)&\text{if }d=2,\end{cases}
\end{align*}
and $V^*$ denotes the dual of a space $V$.

Note that elements in $H(\sdiv\,\sdiv)$ and $H(\vcurl\,\vcurl)$ are symmetric matrices, while elements
in $H(\vcurl\,\sdiv)$ are non-symmetric.
When discretising $H(\sdiv\,\sdiv)$,
we require that
the normal-normal component
$$\vec{n}\transpose\mat{\Phi}\vec{n}$$
is continuous across each facet,
where $\vec{n}$ is normal to the facet.
Regge elements \cite{regge-0,regge-1}
are the most widely used
$H(\sdiv\,\sdiv)$-conforming elements.
When discretising $H(\vcurl\,\vcurl)$,
we require that
the tangent-tangent component
$$\vec{t}\transpose\mat{\Phi}\vec{t}$$
is continuous across each sub-entity,
where $\vec{t}$ is tangential to the sub-entity.
Hellan--Herrmann--Johnson elements \cite{hhj}
are the most widely used
$H(\vcurl\,\vcurl)$-conforming elements.
When discretising $H(\vcurl\,\sdiv)$,
we require that
the tangent-normal component
$$\vec{t}\transpose\mat{\Phi}\vec{n}$$
is continuous across each facet.
Sets of $H(\vcurl\,\sdiv)$-conforming elements are defined in \cite{hu-lin,hu-lin-zhang}.
Gopalakrishnan--Lederer--Sch\"oberl elements \cite{gopalakrishnan-lederer-schoberl}\cite[section 5.3]{lederer-schoberl}
can be used to discretise
$H(\vcurl\,\sdiv)$, although they are slightly nonconforming in $H(\vcurl\,\sdiv)$.

$H(\sdiv\,\sdiv)$-conforming,
$H(\vcurl\,\vcurl)$-conforming and
$H(\vcurl\,\sdiv)$-conforming elements use the
double contravariant Piola ($\mapping^{\sdiv\,\sdiv}$),
double covariant Piola ($\mapping^{\vcurl\,\vcurl}$)
and
covariant-contravariant Piola ($\mapping^{\vcurl\,\sdiv}$)
push-forward maps (respectively). These are defined by
\begin{align*}
\left(\mapping^{\sdiv\,\sdiv}\mat{\Phi}\right)(\boldsymbol{x})&:=\frac1{\left(\det\jacobian\right)^2}\jacobian\mat{\Phi}(\geometrymap\inv(\boldsymbol{x}))\jacobian\transpose,\\
\left(\mapping^{\vcurl\,\vcurl}\mat{\Phi}\right)(\boldsymbol{x})&:=\jacobian\invtranspose\mat{\Phi}(\geometrymap\inv(\boldsymbol{x}))\jacobian\inv,\\
\left(\mapping^{\vcurl\,\sdiv}\mat{\Phi}\right)(\boldsymbol{x})&:=\frac1{\left(\det\jacobian\right)}\jacobian\invtranspose\mat{\Phi}(\geometrymap\inv(\boldsymbol{x}))\jacobian\transpose.
\end{align*}
These maps preserve the appropriate continuity for each element type.

%%%%%%%%%%%%%%%%%%%%%%%%%%%%%%%%%%%%%%%%%%%%%%%%%%%%%%%%%%%%%%%%%%%%%%%%%%%%%%%%%%%%%%%%%%%%%%
\subsubsection{Macroelements}
The elements listed so far in this section are all defined using polynomials on each cell.
There are, however, many desired properties of elements that can be achieved more efficiently
by using macroelements, particularly when enforcing higher-order continuity and divergence-free constraints.
A macroelement is defined by subdividing the cell $\refel$ into a number of
sub-cells, and then defining $\polyspace$ to be polynomial on each subcell with some continuity weaker
than $C^\infty$ between subcells \cite{fiat-macro}. There are a large number of ways of splitting a cell
into sub-cells that used when defining macroelements. A selection of these are shown in \cref{fig:subcells}.

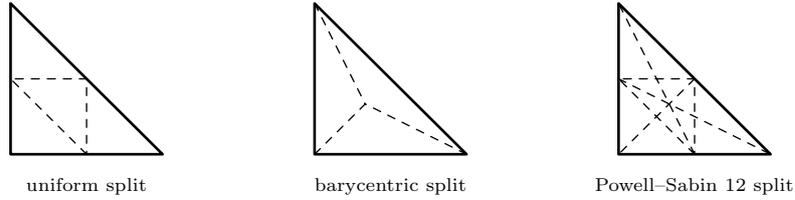
\begin{figure}
\centering
\footnotesize
\begin{tikzpicture}[line width=1pt,line cap=round,line join=round,scale=2]
\begin{scope}
\draw[dashed,line width=0.5pt] (0.5,0) -- (0,0.5) -- (0.5,0.5) -- cycle;
\draw (0,0) -- (1,0) -- (0,1) -- cycle;
\node[anchor=north] at (0.5,-0.1) {uniform split};
\end{scope}
\begin{scope}[shift={(2,0)}]
\draw[dashed,line width=0.5pt] (0,0) -- (0.3333,0.3333) (0,1) -- (0.3333,0.3333) (1,0) -- (0.3333,0.3333);
\draw (0,0) -- (1,0) -- (0,1) -- cycle;
\node[anchor=north] at (0.5,-0.1) {barycentric split};
\end{scope}
\begin{scope}[shift={(4,0)}]
\draw[dashed,line width=0.5pt] (0.5,0) -- (0,1) (0,0.5) -- (1,0) (0.5,0.5) -- (0,0) (0.5,0) -- (0,0.5) -- (0.5,0.5) -- cycle;
\draw (0,0) -- (1,0) -- (0,1) -- cycle;
\node[anchor=north] at (0.5,-0.1) {Powell--Sabin 12 split};
\end{scope}
\end{tikzpicture}
\caption{Three commonly used methods for splitting a triangular cell into sub-cells when defining macroelements.
The uniform split subdivides the triangle into four congruent sub-triangles and is used by P1-iso-P2 elements \cite{p1-iso-p2};
the barycentric split subdivides the triangle into three sub-triangles by adding a vertex at the barycentre and is used by Alfeld--Sorokina elements \cite{alfeld-sorokina}, Guzm\'an--Neilan elements \cite{guzman-neilan} and Hsieh--Clough--Tocher elements \cite{hct-0,hct-1,hct-2} among others;
the Powell--Sabin 12 split subdivides the triangle into 12 sub-triangules using the barycentre and the midpoints of each side and is used by some Powell--Sabin elements \cite{powell-sabin}.
}
\label{fig:subcells}
\end{figure}

Popular macroelements include
the P1-iso-P2 element \cite{p1-iso-p2}, which achieves some of the desired properties of a quadratic polynomial element while being linear on each sub-cell;
Hsieh--Clough--Tocher elements \cite{hct-0,hct-1,hct-2}, which achieve $C^1$ continuity using fewer degrees-of-freedom than other $C^1$ elements;
and Guzm\'an--Neilan elements \cite{guzman-neilan}, which are divergence-free at a lower polynomial degree than is otherwise possible.

%%%%%%%%%%%%%%%%%%%%%%%%%%%%%%%%%%%%%%%%%%%%%%%%%%%%%%%%%%%%%%%%%%%%%%%%%%%%%%%%%%%%%%%%%%%%%%
\subsubsection{Dual elements}
In boundary element applications, elements defined on the barycentric dual grid are commonly used when constructing
operator preconditioners \cite{bc}. The barycentric dual grid is constructed from a triangular cell by
(i) barycentrically refining each cell (by connecting each vertex with the midpoint of its opposite edge)
then (ii) forming all the sub-triangles adjacent to a vertex in the unrefined
grid into a single cell. This process is shown for a small example grid in \cref{fig:barycentric-refinement}.

Dual elements can be treated as direct finite elements (\cref{def:direct}), as they are defined on
physical cells rather than being mapped from a reference cell. It is common, however, to define
each dual basis function as a linear combination of basis functions on the barycentrically refined
grid, where these barycentric basis functions are defined via a reference-mapped element.

\begin{figure}
\centering
\begin{tikzpicture}[line width=1pt,line cap=round,line join=round]
\draw[->] (2.5,1.2) -- +(0.5,0);
\draw[->] (5.8,1.2) -- +(0.5,0);
\draw[->] (9.1,1.2) -- +(0.5,0);
\node[anchor=south] at (2.75,1.2) {(i)};
\node[anchor=south] at (9.35,1.2) {(ii)};

\draw (0, 0) -- (0, 1.2) (0, 0) -- (1, 1) (0, 1.2) -- (1, 1) (0, 0) -- (1.3, -0.2) (1, 1) -- (1.3, -0.2) (1, 1) -- (2, 1) (2, 1) -- (1.3, -0.2) (1, 1) -- (2, 2) (2, 1) -- (2, 2) (1, 1) -- (0.8, 2.2) (2, 2) -- (0.8, 2.2) (2, 2) -- (1.4, 3) (1.4, 3) -- (0.8, 2.2) (1.4, 3) -- (-0.1, 2.5) (-0.1, 2.5) -- (0.8, 2.2) (0, 1.2) -- (-0.1, 2.5) (0, 1.2) -- (0.8, 2.2);
\fill[cyan] (1.3,-0.2) circle (2pt) (2,2) circle (2pt) (1,1) circle (2pt) (0,1.2) circle (2pt) (1.4,3) circle (2pt) (0,0) circle (2pt) (2,1) circle (2pt) (0.8,2.2) circle (2pt) (-0.1,2.5) circle (2pt);
\begin{scope}[shift={(3.3,0)}]
\draw (0, 0) -- (0, 1.2) (0, 0) -- (1, 1) (0, 1.2) -- (1, 1) (0, 0) -- (1.3, -0.2) (1, 1) -- (1.3, -0.2) (1, 1) -- (2, 1) (2, 1) -- (1.3, -0.2) (1, 1) -- (2, 2) (2, 1) -- (2, 2) (1, 1) -- (0.8, 2.2) (2, 2) -- (0.8, 2.2) (2, 2) -- (1.4, 3) (1.4, 3) -- (0.8, 2.2) (1.4, 3) -- (-0.1, 2.5) (-0.1, 2.5) -- (0.8, 2.2) (0, 1.2) -- (-0.1, 2.5) (0, 1.2) -- (0.8, 2.2);
\draw[dashed,line width=0.5pt] (0, 0) -- (0.5, 1.1) (0, 1.2) -- (0.5, 0.5000000000000001) (1, 1) -- (0.0, 0.6000000000000001) (0, 0) -- (1.15, 0.4) (1, 1) -- (0.6499999999999999, -0.09999999999999998) (1.3, -0.2) -- (0.4999999999999999, 0.5) (1, 1) -- (1.65, 0.4) (2, 1) -- (1.15, 0.4) (1.3, -0.2) -- (1.5, 1.0) (1, 1) -- (2.0, 1.5) (2, 1) -- (1.5, 1.5) (2, 2) -- (1.5, 1.0) (1, 1) -- (1.4, 2.1) (2, 2) -- (0.8999999999999999, 1.6) (0.8, 2.2) -- (1.5, 1.5) (2, 2) -- (1.1, 2.6) (1.4, 3) -- (1.4000000000000001, 2.1) (0.8, 2.2) -- (1.7000000000000002, 2.5) (1.4, 3) -- (0.34999999999999987, 2.35) (-0.1, 2.5) -- (1.0999999999999999, 2.6) (0.8, 2.2) -- (0.6499999999999998, 2.75) (0, 1.2) -- (0.35000000000000003, 2.35) (-0.1, 2.5) -- (0.4, 1.7000000000000002) (0.8, 2.2) -- (-0.04999999999999999, 1.85) (0, 1.2) -- (0.9, 1.6) (1, 1) -- (0.4, 1.7000000000000002) (0.8, 2.2) -- (0.5, 1.1);
\fill[cyan] (1.3,-0.2) circle (2pt) (2,2) circle (2pt) (1,1) circle (2pt) (0,1.2) circle (2pt) (1.4,3) circle (2pt) (0,0) circle (2pt) (2,1) circle (2pt) (0.8,2.2) circle (2pt) (-0.1,2.5) circle (2pt);
\fill[orange] (1.4000000000000001,2.4) circle (2pt) (1.2666666666666666,1.7333333333333334) circle (2pt) (1.4333333333333333,0.6) circle (2pt) (1.6666666666666667,1.3333333333333333) circle (2pt) (0.6,1.4666666666666668) circle (2pt) (0.23333333333333336,1.9666666666666668) circle (2pt) (0.6999999999999998,2.566666666666667) circle (2pt) (0.7666666666666666,0.26666666666666666) circle (2pt) (0.3333333333333333,0.7333333333333334) circle (2pt);
\end{scope}
\begin{scope}[shift={(6.6,0)}]
\draw (0, 0) -- (0, 1.2) (0, 0) -- (1.3, -0.2) (2, 1) -- (1.3, -0.2) (2, 1) -- (2, 2) (2, 2) -- (1.4, 3) (1.4, 3) -- (-0.1, 2.5) (0, 1.2) -- (-0.1, 2.5) (0.3333333333333333, 0.7333333333333334) -- (0.5, 1.1) (0.3333333333333333, 0.7333333333333334) -- (0.5, 0.5000000000000001) (0.3333333333333333, 0.7333333333333334) -- (0.0, 0.6000000000000001) (0.7666666666666666, 0.26666666666666666) -- (1.15, 0.4) (0.7666666666666666, 0.26666666666666666) -- (0.6499999999999999, -0.09999999999999998) (0.7666666666666666, 0.26666666666666666) -- (0.4999999999999999, 0.5) (1.4333333333333333, 0.6) -- (1.65, 0.4) (1.4333333333333333, 0.6) -- (1.15, 0.4) (1.4333333333333333, 0.6) -- (1.5, 1.0) (1.6666666666666667, 1.3333333333333333) -- (2.0, 1.5) (1.6666666666666667, 1.3333333333333333) -- (1.5, 1.5) (1.6666666666666667, 1.3333333333333333) -- (1.5, 1.0) (1.2666666666666666, 1.7333333333333334) -- (1.4, 2.1) (1.2666666666666666, 1.7333333333333334) -- (0.8999999999999999, 1.6) (1.2666666666666666, 1.7333333333333334) -- (1.5, 1.5) (1.4000000000000001, 2.4) -- (1.1, 2.6) (1.4000000000000001, 2.4) -- (1.4000000000000001, 2.1) (1.4000000000000001, 2.4) -- (1.7000000000000002, 2.5) (0.6999999999999998, 2.566666666666667) -- (0.34999999999999987, 2.35) (0.6999999999999998, 2.566666666666667) -- (1.0999999999999999, 2.6) (0.6999999999999998, 2.566666666666667) -- (0.6499999999999998, 2.75) (0.23333333333333336, 1.9666666666666668) -- (0.35000000000000003, 2.35) (0.23333333333333336, 1.9666666666666668) -- (0.4, 1.7000000000000002) (0.23333333333333336, 1.9666666666666668) -- (-0.04999999999999999, 1.85) (0.6, 1.4666666666666668) -- (0.9, 1.6) (0.6, 1.4666666666666668) -- (0.4, 1.7000000000000002) (0.6, 1.4666666666666668) -- (0.5, 1.1);
\draw[dashed,line width=0.5pt] (0, 0) -- (1, 1) (0, 1.2) -- (1, 1) (1, 1) -- (1.3, -0.2) (1, 1) -- (2, 1) (1, 1) -- (2, 2) (1, 1) -- (0.8, 2.2) (2, 2) -- (0.8, 2.2) (1.4, 3) -- (0.8, 2.2) (-0.1, 2.5) -- (0.8, 2.2) (0, 1.2) -- (0.8, 2.2) (0, 0) -- (0.3333333333333333, 0.7333333333333334) (0, 1.2) -- (0.3333333333333333, 0.7333333333333334) (1, 1) -- (0.3333333333333333, 0.7333333333333334) (0, 0) -- (0.7666666666666666, 0.26666666666666666) (1, 1) -- (0.7666666666666666, 0.26666666666666666) (1.3, -0.2) -- (0.7666666666666666, 0.26666666666666666) (1, 1) -- (1.4333333333333333, 0.6) (2, 1) -- (1.4333333333333333, 0.6) (1.3, -0.2) -- (1.4333333333333333, 0.6) (1, 1) -- (1.6666666666666667, 1.3333333333333333) (2, 1) -- (1.6666666666666667, 1.3333333333333333) (2, 2) -- (1.6666666666666667, 1.3333333333333333) (1, 1) -- (1.2666666666666666, 1.7333333333333334) (2, 2) -- (1.2666666666666666, 1.7333333333333334) (0.8, 2.2) -- (1.2666666666666666, 1.7333333333333334) (2, 2) -- (1.4000000000000001, 2.4) (1.4, 3) -- (1.4000000000000001, 2.4) (0.8, 2.2) -- (1.4000000000000001, 2.4) (1.4, 3) -- (0.6999999999999998, 2.566666666666667) (-0.1, 2.5) -- (0.6999999999999998, 2.566666666666667) (0.8, 2.2) -- (0.6999999999999998, 2.566666666666667) (0, 1.2) -- (0.23333333333333336, 1.9666666666666668) (-0.1, 2.5) -- (0.23333333333333336, 1.9666666666666668) (0.8, 2.2) -- (0.23333333333333336, 1.9666666666666668) (0, 1.2) -- (0.6, 1.4666666666666668) (1, 1) -- (0.6, 1.4666666666666668) (0.8, 2.2) -- (0.6, 1.4666666666666668);
\fill[cyan] (1.3,-0.2) circle (2pt) (2,2) circle (2pt) (1,1) circle (2pt) (0,1.2) circle (2pt) (1.4,3) circle (2pt) (0,0) circle (2pt) (2,1) circle (2pt) (0.8,2.2) circle (2pt) (-0.1,2.5) circle (2pt);
\fill[orange] (1.4000000000000001,2.4) circle (2pt) (1.2666666666666666,1.7333333333333334) circle (2pt) (1.4333333333333333,0.6) circle (2pt) (1.6666666666666667,1.3333333333333333) circle (2pt) (0.6,1.4666666666666668) circle (2pt) (0.23333333333333336,1.9666666666666668) circle (2pt) (0.6999999999999998,2.566666666666667) circle (2pt) (0.7666666666666666,0.26666666666666666) circle (2pt) (0.3333333333333333,0.7333333333333334) circle (2pt);
\end{scope}
\begin{scope}[shift={(9.9,0)}]
\draw (0, 0) -- (0, 1.2) (0, 0) -- (1.3, -0.2) (2, 1) -- (1.3, -0.2) (2, 1) -- (2, 2) (2, 2) -- (1.4, 3) (1.4, 3) -- (-0.1, 2.5) (0, 1.2) -- (-0.1, 2.5) (0.3333333333333333, 0.7333333333333334) -- (0.5, 1.1) (0.3333333333333333, 0.7333333333333334) -- (0.5, 0.5000000000000001) (0.3333333333333333, 0.7333333333333334) -- (0.0, 0.6000000000000001) (0.7666666666666666, 0.26666666666666666) -- (1.15, 0.4) (0.7666666666666666, 0.26666666666666666) -- (0.6499999999999999, -0.09999999999999998) (0.7666666666666666, 0.26666666666666666) -- (0.4999999999999999, 0.5) (1.4333333333333333, 0.6) -- (1.65, 0.4) (1.4333333333333333, 0.6) -- (1.15, 0.4) (1.4333333333333333, 0.6) -- (1.5, 1.0) (1.6666666666666667, 1.3333333333333333) -- (2.0, 1.5) (1.6666666666666667, 1.3333333333333333) -- (1.5, 1.5) (1.6666666666666667, 1.3333333333333333) -- (1.5, 1.0) (1.2666666666666666, 1.7333333333333334) -- (1.4, 2.1) (1.2666666666666666, 1.7333333333333334) -- (0.8999999999999999, 1.6) (1.2666666666666666, 1.7333333333333334) -- (1.5, 1.5) (1.4000000000000001, 2.4) -- (1.1, 2.6) (1.4000000000000001, 2.4) -- (1.4000000000000001, 2.1) (1.4000000000000001, 2.4) -- (1.7000000000000002, 2.5) (0.6999999999999998, 2.566666666666667) -- (0.34999999999999987, 2.35) (0.6999999999999998, 2.566666666666667) -- (1.0999999999999999, 2.6) (0.6999999999999998, 2.566666666666667) -- (0.6499999999999998, 2.75) (0.23333333333333336, 1.9666666666666668) -- (0.35000000000000003, 2.35) (0.23333333333333336, 1.9666666666666668) -- (0.4, 1.7000000000000002) (0.23333333333333336, 1.9666666666666668) -- (-0.04999999999999999, 1.85) (0.6, 1.4666666666666668) -- (0.9, 1.6) (0.6, 1.4666666666666668) -- (0.4, 1.7000000000000002) (0.6, 1.4666666666666668) -- (0.5, 1.1);
\fill[orange] (1.4000000000000001,2.4) circle (2pt) (1.2666666666666666,1.7333333333333334) circle (2pt) (1.4333333333333333,0.6) circle (2pt) (1.6666666666666667,1.3333333333333333) circle (2pt) (0.6,1.4666666666666668) circle (2pt) (0.23333333333333336,1.9666666666666668) circle (2pt) (0.6999999999999998,2.566666666666667) circle (2pt) (0.7666666666666666,0.26666666666666666) circle (2pt) (0.3333333333333333,0.7333333333333334) circle (2pt);
\end{scope}
\end{tikzpicture}
\caption{Creating a barycentric dual grid by refining a triangular grid. The barycentric dual grid is used in boundary element applications to define dual polynomial and
Buffa--Christiansen elements \cite{bc}.}
\label{fig:barycentric-refinement}
\end{figure}
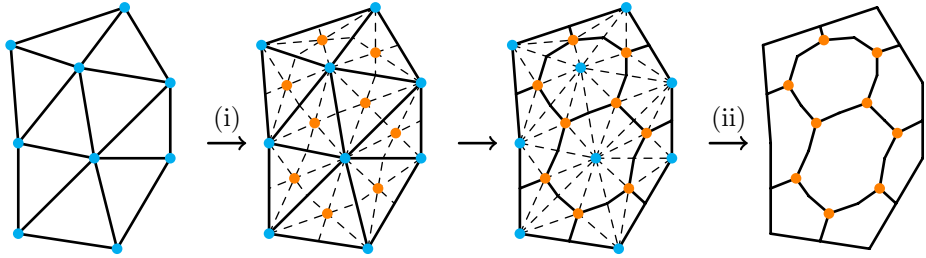

In \cite{bc}, degree 0 and 1 scalar-valued barycentric dual elements were defined alongside lowest degree $H(\sdiv)$-conforming Buffa--Christiansen elements for triangular cells.

%%%%%%%%%%%%%%%%%%%%%%%%%%%%%%%%%%%%%%%%%%%%%%%%%%%%%%%%%%%%%%%%%%%%%%%%%%%%%%%%%%%%%%%%%%%%%%
\subsection{Element families and complexes}\label{sec:definition:complexes}
When doing finite element computations on meshes that contain a mixture of cell types, it is important to ensure that the elements
used have the correct continuity between neighbouring cells of different types.
To classify elements where this continuity is achieved, we define compatible elements as follows.

\begin{definition}[compatible elements]
Two finite elements $(\refel,\polyspace,\dualbasis)$ and $(\tilde{\refel},\tilde{\polyspace},\tilde{\dualbasis})$ are compatible if:
\begin{itemize}
\item the topological dimensions of $\refel$ and $\tilde{\refel}$ are equal;
\item for any sub-entities $\subentity\subset\refel$ and $\tilde{\subentity}\subset\tilde{\refel}$ of the same type that are not equal to the entire reference cell,
$\left.\dualbasis\right|_{\subentity}$ and
$\tilde\dualbasis\hspace{-0.25em}\left.\vphantom{\dualbasis}\right|_{\tilde{\subentity}}$ contain an equivalent set of functionals.
\end{itemize}
\end{definition}
We note that \cref{a:equivalent} can be re-expressed as the assumption that an element is compatible with itself.
Finite elements are often defined in families, with elements of the same degree within the family being compatible with each other. For example, Lagrange elements
of the same degree on different cell types are compatible.

Many PDEs of interest in mathematics and science have rich mathematical structures that are
key to their well-posedness \cite{arnold2002}. Often, this structure can be encoded in a cochain complex.
One such example is the de Rham complex, which describes the relationship between the spaces $H^1$, $H(\vcurl)$,
$H(\sdiv)$, and $L^2$, and the differential operators $\nabla$, $\nabla\times$, and $\nabla\cdot$.
In three dimensions, it is given by
$$
\mathbb{R} \hookrightarrow H^1\xrightarrow{\nabla} H(\vcurl)\xrightarrow{\nabla\times} H(\sdiv)\xrightarrow{\nabla\cdot} L^2\to0.
$$
Preserving this structure at the discrete level is often key to obtaining stable and accurate
discretisations, and there are several finite element families that replicate it. For example, the
following family forms a discrete De Rham complex in three dimensions:
$$
\parbox{1.75cm}{\centering Lagrange degree $k$}\xrightarrow{\nabla}\parbox{1.85cm}{\centering N\'ed\'elec (first kind) degree $k$}\xrightarrow{\nabla\times}\parbox{2.75cm}{\centering Raviart--Thomas degree $k$}\xrightarrow{\nabla\cdot}\parbox{2.3cm}{\centering discontinuous Lagrange degree $k-1$}
$$
Many other finite element subcomplexes exist, both for the de Rham complex as well as others.
Two naming conventions for these discrete complexes come from exterior calculus \cite{feec,periodic-table} and the work of 
Bernardo Cockburn and Guosheng Fu \cite{cockburn-fu}: in these conventions, the name of the example discrete complex above is
$\mathcal{P}_k^{-}\Lambda^r$ and $S^{\tetrahedron}_{2,k}$ (respectively) for a tetrahedron
$\mathcal{Q}_k^{-}\Lambda^r$ and $S^{\hexahedron}_{4,k}$ (respectively) for a hexahedron,
where $k$ is the degree of the element and the exterior derivative acts on $r$-forms (i.e.~$r=0$ is
Lagrange, $r=1$ is the Raviart--Thomas, and so on).

%%%%%%%%%%%%%%%%%%%%%%%%%%%%%%%%%%%%%%%%%%%%%%%%%%%%%%%%%%%%%%%%%%%%%%%%%%%%%%%%%%%%%%%%%%%%%%
\subsection{Reference cell sub-entity numbering}\label{sec:definition:degree}
In this section, we define the reference cells that we use on DefElement, and
the numbering that we assign to their sub-entities.
We note that the choice of reference cell and numbering of its sub-entities is arbitrary,
and the definitions and results that follow could be adapted to apply to any other choice
of reference cell.
The reference cells that we use on DefElement are given in the following definition.

\begin{definition}\label{def:reference-cells}
The reference interval ($\refel^{\interval}$), triangle ($\refel^{\rtriangle}$), quadrilateral ($\refel^{\quadrilateral}$), tetrahedron ($\refel^{\tetrahedron}$),
hexahedron ($\refel^{\hexahedron}$), prism ($\refel^{\prism}$), and pyramid ($\refel^{\pyramid}$) are defined by
\begin{align*}
\refel^{\interval}&= \left\{x\,\middle|\,0\leqslant x\leqslant 1\right\},\\
\refel^{\rtriangle}&= \left\{(x,y)\,\middle|\,0\leqslant x,\,0\leqslant y,\,x+y\leqslant1\right\},\\
\refel^{\quadrilateral}&= \left\{(x,y)\,\middle|\,0\leqslant x\leqslant1,\,0\leqslant y\leqslant1\right\},\\
\refel^{\tetrahedron}&= \left\{(x,y,z)\,\middle|\,0\leqslant x,\,0\leqslant y,\,0\leqslant z,\,x+y+z\leqslant1\right\},\\
\refel^{\hexahedron}&= \left\{(x,y,z)\,\middle|\,0\leqslant x\leqslant1,\,0\leqslant y\leqslant1,\,0\leqslant z\leqslant1\right\},\\
\refel^{\prism}&= \left\{(x,y,z)\,\middle|\,0\leqslant x,\,0\leqslant y,\,x+y\leqslant1,\,0\leqslant z\leqslant1\right\},\\
\refel^{\pyramid}&= \left\{(x,y,z)\,\middle|\,0\leqslant x,\,0\leqslant y,\,0\leqslant z\leqslant1,\,x+z\leqslant1,\,y+z\leqslant1\right\}.
\end{align*}
The numbering of their sub-entities used in DefElement is shown in \cref{reference:1d-cells,reference:2d-cells,reference:3d-cells}.

For $d\in\mathbb{N}$, the reference $k$-dimensional reference simplex and cube cells
are defined by
\begin{align*}
\refel^{\simplex{d}}&=\left\{(x_0,\dots,x_{d-1})\,\middle|\,0\leqslant\min_{i=0}^{d-1}x_i\textup{ and }\sum_{i=0}^{d-1}x_i\leqslant1\right\},
\\
\refel^{\hypercube{d}}&=\left\{(x_0,\dots,x_{d-1})\,\middle|\,0\leqslant\min_{i=0}^{d-1}x_i\textup{ and }\max_{i=0}^{d-1}x_i\leqslant1\right\}.
\end{align*}
We note that for $d=1$ to $d=3$, the simplex reference cells correspond to
$\refel^{\interval}$,
$\refel^{\rtriangle}$ and
$\refel^{\tetrahedron}$, and the cube cells correspond to
$\refel^{\interval}$,
$\refel^{\quadrilateral}$ and
$\refel^{\hexahedron}$.
\end{definition}

In order to define a consistent convention to use for numbering the sub-entities of reference
cells on DefElement, 
we use the following definition of $<$ for tuples of values, which corresponds to
the behaviour of the \texttt{<} operator in Python.

\begin{definition}
For two tuples $a$ and $b$, $<$ is defined by:
\begin{itemize}
\item For two length 1 tuples $a=(a_0)$ and $b=(b_0)$,
$a<b$ if and only if
$a_0<b_0$
\item For two length $n$ tuples $a=(a_0,\dots,a_{n-1})$ and $b=(b_0,\dots,b_{n-1})$,
$a<b$ if and only if one of the following conditions holds:
\begin{itemize}
\item $a_0<b_0$,
\item $a_0=b_0$ and $(a_1,\dots,a_{n-1}) < (b_1,\dots,b_{n-1})$.
\end{itemize}
\item For two tuples $a$ and $b$ of length $n_a$ and $n_b$,
$a<b$ if and only if one of the following conditions holds:
\begin{itemize}
\item $(a_0,\dots,a_{\min(n_a, n_b)})<(b_0,\dots,b_{\min(n_a, n_b)})$,
\item $(a_0,\dots,a_{\min(n_a, n_b)})=(b_0,\dots,b_{\min(n_a, n_b)})$ and $n_a < n_b$.
\end{itemize}
\end{itemize}
\end{definition}

We now present the sub-entity numbering conventions used by DefElement. The aim of these is to
ensure consistency in the choices of numbering. These conventions lead the numbering
shown in \cref{reference:1d-cells,reference:2d-cells,reference:3d-cells}.

\begin{convention}[Numbering of vertices]\label{c:vertices}
Let $a=(a_0,\dots,a_{d-1}), b=(b_0,\dots,b_{d-1})\in\mathbb{R}^{d}$ be two vertices of a reference
cell $\refel$. The index of vertex $a$ is less than the index of vertex $b$ if and only if
$(a_{d-1},\dots,a_0)<(b_{d-1},\dots,b_0)$.
\end{convention}

\begin{convention}[Numbering of sub-entities]\label{c:sub-entities}
Let $a, b\subseteq\refel$ be two sub-entities of the same topological dimension.
Let $v_a$ and $v_b$ be tuples containing the indices of vertices connected to $a$ and $b$
(respectively), with vertices with lower indices appearing earlier in the tuples.
The index of sub-entity $a$ is less than the index of sub-entity $b$ if and only if
$v_a < v_b$.
\end{convention}

\begin{figure}[!ht]
\centering
\begin{tikzpicture}[x=1cm,y=1cm,line cap=round,line join=round]
\footnotesize
\draw[-stealth,black,line width=0.8pt,line cap=round] (25.0,25.0) -- (25.375,25.0);
\node[black,anchor=west] at (25.405,25.0) {$x$};\draw[black,line width=0.8pt,line cap=round] (26.125,25.0) -- (27.25,25.0);
\draw[orange,line width=0.8pt,fill=white] (26.125,25.0) circle (4.0pt);
\node[black,anchor=center] at (26.125,25.0) {0};\draw[orange,line width=0.8pt,fill=white] (27.25,25.0) circle (4.0pt);
\node[black,anchor=center] at (27.25,25.0) {1};\draw[black,line width=0.8pt,line cap=round] (27.625,25.0) -- (28.75,25.0);
\draw[cyan,line width=0.8pt,fill=white] (28.1875,25.0) circle (4.0pt);
\node[black,anchor=center] at (28.1875,25.0) {0};\end{tikzpicture}
\caption{The numbering of the sub-entities that DefElement uses for the reference interval.}
\label{reference:1d-cells}
\end{figure}
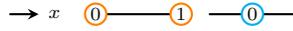

\begin{figure}[!ht]
\centering
\begin{tikzpicture}[x=1cm,y=1cm,line cap=round,line join=round]
\footnotesize
%\draw[-stealth,black,line width=0.8pt,line cap=round] (25.0,25.0) -- (25.375,25.0);
%\draw[-stealth,black,line width=0.8pt,line cap=round] (25.0,25.0) -- (25.0,25.375);
%\node[black,anchor=west] at (25.405,25.0) {$x$};\node[black,anchor=south] at (25.0,25.405) {$y$};
\draw[black,line width=0.8pt,line cap=round] (27.25,25.0) -- (26.125,26.125);
\draw[black,line width=0.8pt,line cap=round] (26.125,25.0) -- (26.125,26.125);
\draw[black,line width=0.8pt,line cap=round] (26.125,25.0) -- (27.25,25.0);
\draw[orange,line width=0.8pt,fill=white] (26.125,25.0) circle (4.0pt);
\node[black,anchor=center] at (26.125,25.0) {0};\draw[orange,line width=0.8pt,fill=white] (27.25,25.0) circle (4.0pt);
\node[black,anchor=center] at (27.25,25.0) {1};\draw[orange,line width=0.8pt,fill=white] (26.125,26.125) circle (4.0pt);
\node[black,anchor=center] at (26.125,26.125) {2};\draw[black,line width=0.8pt,line cap=round] (28.75,25.0) -- (27.625,26.125);
\draw[cyan,line width=0.8pt,fill=white] (28.1875,25.5625) circle (4.0pt);
\node[black,anchor=center] at (28.1875,25.5625) {2};\draw[black,line width=0.8pt,line cap=round] (27.625,25.0) -- (27.625,26.125);
\draw[cyan,line width=0.8pt,fill=white] (27.625,25.5625) circle (4.0pt);
\node[black,anchor=center] at (27.625,25.5625) {1};\draw[black,line width=0.8pt,line cap=round] (27.625,25.0) -- (28.75,25.0);
\draw[cyan,line width=0.8pt,fill=white] (28.1875,25.0) circle (4.0pt);
\node[black,anchor=center] at (28.1875,25.0) {0};\draw[green,line width=0.8pt,fill=white] (29.5,25.375) circle (4.0pt);
\node[black,anchor=center] at (29.5,25.375) {0};\draw[black,line width=0.8pt,line cap=round] (30.25,25.0) -- (29.125,26.125);
\draw[black,line width=0.8pt,line cap=round] (29.125,25.0) -- (29.125,26.125);
\draw[black,line width=0.8pt,line cap=round] (29.125,25.0) -- (30.25,25.0);
\begin{scope}[shift={(0,-2)}]
\draw[-stealth,black,line width=0.8pt,line cap=round] (25.0,25.0) -- (25.375,25.0);
\draw[-stealth,black,line width=0.8pt,line cap=round] (25.0,25.0) -- (25.0,25.375);
\node[black,anchor=west] at (25.405,25.0) {$x$};\node[black,anchor=south] at (25.0,25.405) {$y$};\draw[black,line width=0.8pt,line cap=round] (26.125,25.0) -- (27.25,25.0);
\draw[black,line width=0.8pt,line cap=round] (26.125,25.0) -- (26.125,26.125);
\draw[black,line width=0.8pt,line cap=round] (27.25,25.0) -- (27.25,26.125);
\draw[black,line width=0.8pt,line cap=round] (26.125,26.125) -- (27.25,26.125);
\draw[orange,line width=0.8pt,fill=white] (26.125,25.0) circle (4.0pt);
\node[black,anchor=center] at (26.125,25.0) {0};\draw[orange,line width=0.8pt,fill=white] (27.25,25.0) circle (4.0pt);
\node[black,anchor=center] at (27.25,25.0) {1};\draw[orange,line width=0.8pt,fill=white] (26.125,26.125) circle (4.0pt);
\node[black,anchor=center] at (26.125,26.125) {2};\draw[orange,line width=0.8pt,fill=white] (27.25,26.125) circle (4.0pt);
\node[black,anchor=center] at (27.25,26.125) {3};\draw[black,line width=0.8pt,line cap=round] (27.625,25.0) -- (28.75,25.0);
\draw[cyan,line width=0.8pt,fill=white] (28.1875,25.0) circle (4.0pt);
\node[black,anchor=center] at (28.1875,25.0) {0};\draw[black,line width=0.8pt,line cap=round] (27.625,25.0) -- (27.625,26.125);
\draw[cyan,line width=0.8pt,fill=white] (27.625,25.5625) circle (4.0pt);
\node[black,anchor=center] at (27.625,25.5625) {1};\draw[black,line width=0.8pt,line cap=round] (28.75,25.0) -- (28.75,26.125);
\draw[cyan,line width=0.8pt,fill=white] (28.75,25.5625) circle (4.0pt);
\node[black,anchor=center] at (28.75,25.5625) {2};\draw[black,line width=0.8pt,line cap=round] (27.625,26.125) -- (28.75,26.125);
\draw[cyan,line width=0.8pt,fill=white] (28.1875,26.125) circle (4.0pt);
\node[black,anchor=center] at (28.1875,26.125) {3};\draw[green,line width=0.8pt,fill=white] (29.6875,25.5625) circle (4.0pt);
\node[black,anchor=center] at (29.6875,25.5625) {0};\draw[black,line width=0.8pt,line cap=round] (29.125,25.0) -- (30.25,25.0);
\draw[black,line width=0.8pt,line cap=round] (29.125,25.0) -- (29.125,26.125);
\draw[black,line width=0.8pt,line cap=round] (30.25,25.0) -- (30.25,26.125);
\draw[black,line width=0.8pt,line cap=round] (29.125,26.125) -- (30.25,26.125);
\end{scope}
\end{tikzpicture}
\caption{The numbering of the sub-entities that DefElement uses for two-dimensional reference cells.}
\label{reference:2d-cells}
\end{figure}

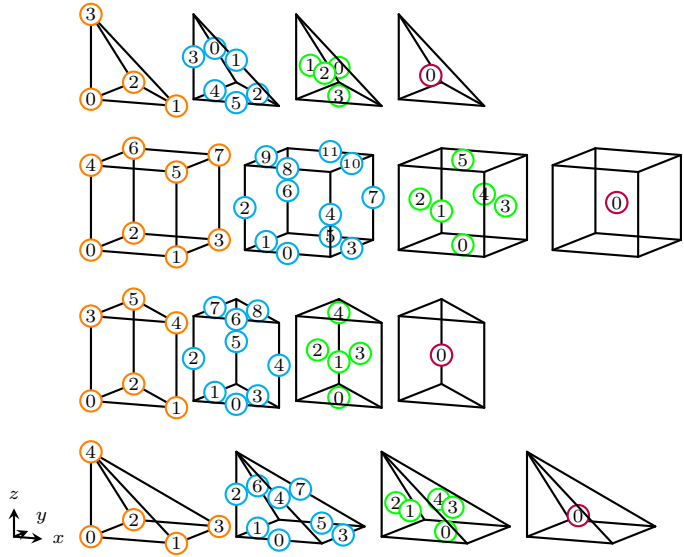
\begin{figure}[!ht]
\centering
\begin{tikzpicture}[x=1cm,y=1cm,line cap=round,line join=round]
\footnotesize
%\draw[-stealth,black,line width=0.8pt,line cap=round] (25.0,25.09) -- (25.375,25.06);
%\draw[-stealth,black,line width=0.8pt,line cap=round] (25.0,25.09) -- (25.1875,25.165);
%\draw[-stealth,black,line width=0.8pt,line cap=round] (25.0,25.09) -- (25.0,25.465);
%\node[black,anchor=west] at (25.405,25.0576) {$x$};\node[black,anchor=south west] at (25.1875,25.165) {$y$};\node[black,anchor=south] at (25.0,25.495) {$z$};
\draw[black,line width=0.8pt,line cap=round] (26.575,25.315) -- (26.0125,26.215);
\draw[black,line width=0.8pt,line cap=round] (27.1375,25.0) -- (26.575,25.315);
\draw[black,line width=0.8pt,line cap=round] (26.0125,25.09) -- (26.575,25.315);
\draw[orange,line width=0.8pt,fill=white] (26.575,25.315) circle (4.0pt);
\node[black,anchor=center] at (26.575,25.315) {2};\draw[black,line width=0.8pt,line cap=round] (27.1375,25.0) -- (26.0125,26.215);
\draw[black,line width=0.8pt,line cap=round] (26.0125,25.09) -- (26.0125,26.215);
\draw[black,line width=0.8pt,line cap=round] (26.0125,25.09) -- (27.1375,25.0);
\draw[orange,line width=0.8pt,fill=white] (26.0125,25.09) circle (4.0pt);
\node[black,anchor=center] at (26.0125,25.09) {0};\draw[orange,line width=0.8pt,fill=white] (27.1375,25.0) circle (4.0pt);
\node[black,anchor=center] at (27.1375,25.0) {1};\draw[orange,line width=0.8pt,fill=white] (26.0125,26.215) circle (4.0pt);
\node[black,anchor=center] at (26.0125,26.215) {3};\draw[black,line width=0.8pt,line cap=round] (27.925,25.315) -- (27.3625,26.215);
\draw[cyan,line width=0.8pt,fill=white] (27.64375,25.765) circle (4.0pt);
\node[black,anchor=center] at (27.64375,25.765) {5};\draw[black,line width=0.8pt,line cap=round] (28.4875,25.0) -- (27.925,25.315);
\draw[cyan,line width=0.8pt,fill=white] (28.20625,25.1575) circle (4.0pt);
\node[black,anchor=center] at (28.20625,25.1575) {3};\draw[black,line width=0.8pt,line cap=round] (27.3625,25.09) -- (27.925,25.315);
\draw[cyan,line width=0.8pt,fill=white] (27.64375,25.2025) circle (4.0pt);
\node[black,anchor=center] at (27.64375,25.2025) {1};\draw[black,line width=0.8pt,line cap=round] (28.4875,25.0) -- (27.3625,26.215);
\draw[cyan,line width=0.8pt,fill=white] (27.925,25.6075) circle (4.0pt);
\node[black,anchor=center] at (27.925,25.6075) {4};\draw[black,line width=0.8pt,line cap=round] (27.3625,25.09) -- (27.3625,26.215);
\draw[cyan,line width=0.8pt,fill=white] (27.3625,25.6525) circle (4.0pt);
\node[black,anchor=center] at (27.3625,25.6525) {2};\draw[black,line width=0.8pt,line cap=round] (27.3625,25.09) -- (28.4875,25.0);
\draw[cyan,line width=0.8pt,fill=white] (27.925,25.045) circle (4.0pt);
\node[black,anchor=center] at (27.925,25.045) {0};\draw[green,line width=0.8pt,fill=white] (29.275,25.51) circle (4.0pt);
\node[black,anchor=center] at (29.275,25.51) {3};\draw[green,line width=0.8pt,fill=white] (28.9,25.54) circle (4.0pt);
\node[black,anchor=center] at (28.9,25.54) {2};\draw[green,line width=0.8pt,fill=white] (29.275,25.135) circle (4.0pt);
\node[black,anchor=center] at (29.275,25.135) {0};\draw[black,line width=0.8pt,line cap=round] (29.275,25.315) -- (28.7125,26.215);
\draw[black,line width=0.8pt,line cap=round] (29.8375,25.0) -- (29.275,25.315);
\draw[black,line width=0.8pt,line cap=round] (28.7125,25.09) -- (29.275,25.315);
\draw[green,line width=0.8pt,fill=white] (29.0875,25.435) circle (4.0pt);
\node[black,anchor=center] at (29.0875,25.435) {1};\draw[black,line width=0.8pt,line cap=round] (29.8375,25.0) -- (28.7125,26.215);
\draw[black,line width=0.8pt,line cap=round] (28.7125,25.09) -- (28.7125,26.215);
\draw[black,line width=0.8pt,line cap=round] (28.7125,25.09) -- (29.8375,25.0);
\draw[black,line width=0.8pt,line cap=round] (30.625,25.315) -- (30.0625,26.215);
\draw[black,line width=0.8pt,line cap=round] (31.1875,25.0) -- (30.625,25.315);
\draw[black,line width=0.8pt,line cap=round] (30.0625,25.09) -- (30.625,25.315);
\draw[purple,line width=0.8pt,fill=white] (30.484375,25.405) circle (4.0pt);
\node[black,anchor=center] at (30.484375,25.405) {0};\draw[black,line width=0.8pt,line cap=round] (31.1875,25.0) -- (30.0625,26.215);
\draw[black,line width=0.8pt,line cap=round] (30.0625,25.09) -- (30.0625,26.215);
\draw[black,line width=0.8pt,line cap=round] (30.0625,25.09) -- (31.1875,25.0);
\begin{scope}[shift={(0,-2)}]
%\draw[-stealth,black,line width=0.8pt,line cap=round] (25.0,25.09) -- (25.375,25.06);
%\draw[-stealth,black,line width=0.8pt,line cap=round] (25.0,25.09) -- (25.1875,25.165);
%\draw[-stealth,black,line width=0.8pt,line cap=round] (25.0,25.09) -- (25.0,25.465);
%\node[black,anchor=west] at (25.405,25.0576) {$x$};\node[black,anchor=south west] at (25.1875,25.165) {$y$};\node[black,anchor=south] at (25.0,25.495) {$z$};
\draw[black,line width=0.8pt,line cap=round] (26.575,25.315) -- (27.7,25.225);
\draw[black,line width=0.8pt,line cap=round] (26.575,25.315) -- (26.575,26.44);
\draw[black,line width=0.8pt,line cap=round] (26.0125,25.09) -- (26.575,25.315);
\draw[orange,line width=0.8pt,fill=white] (26.575,25.315) circle (4.0pt);
\node[black,anchor=center] at (26.575,25.315) {2};\draw[black,line width=0.8pt,line cap=round] (27.1375,25.0) -- (27.7,25.225);
\draw[black,line width=0.8pt,line cap=round] (27.7,25.225) -- (27.7,26.35);
\draw[orange,line width=0.8pt,fill=white] (27.7,25.225) circle (4.0pt);
\node[black,anchor=center] at (27.7,25.225) {3};\draw[black,line width=0.8pt,line cap=round] (26.0125,25.09) -- (27.1375,25.0);
\draw[black,line width=0.8pt,line cap=round] (26.0125,25.09) -- (26.0125,26.215);
\draw[black,line width=0.8pt,line cap=round] (27.1375,25.0) -- (27.1375,26.125);
\draw[orange,line width=0.8pt,fill=white] (26.0125,25.09) circle (4.0pt);
\node[black,anchor=center] at (26.0125,25.09) {0};\draw[orange,line width=0.8pt,fill=white] (27.1375,25.0) circle (4.0pt);
\node[black,anchor=center] at (27.1375,25.0) {1};\draw[black,line width=0.8pt,line cap=round] (26.0125,26.215) -- (27.1375,26.125);
\draw[black,line width=0.8pt,line cap=round] (26.0125,26.215) -- (26.575,26.44);
\draw[black,line width=0.8pt,line cap=round] (27.1375,26.125) -- (27.7,26.35);
\draw[black,line width=0.8pt,line cap=round] (26.575,26.44) -- (27.7,26.35);
\draw[orange,line width=0.8pt,fill=white] (26.0125,26.215) circle (4.0pt);
\node[black,anchor=center] at (26.0125,26.215) {4};\draw[orange,line width=0.8pt,fill=white] (27.1375,26.125) circle (4.0pt);
\node[black,anchor=center] at (27.1375,26.125) {5};\draw[orange,line width=0.8pt,fill=white] (26.575,26.44) circle (4.0pt);
\node[black,anchor=center] at (26.575,26.44) {6};\draw[orange,line width=0.8pt,fill=white] (27.7,26.35) circle (4.0pt);
\node[black,anchor=center] at (27.7,26.35) {7};\draw[black,line width=0.8pt,line cap=round] (28.6,25.315) -- (29.725,25.225);
\draw[cyan,line width=0.8pt,fill=white] (29.1625,25.27) circle (4.0pt);
\node[black,anchor=center] at (29.1625,25.27) {5};\draw[black,line width=0.8pt,line cap=round] (28.6,25.315) -- (28.6,26.44);
\draw[cyan,line width=0.8pt,fill=white] (28.6,25.8775) circle (4.0pt);
\node[black,anchor=center] at (28.6,25.8775) {6};\draw[black,line width=0.8pt,line cap=round] (28.0375,25.09) -- (28.6,25.315);
\draw[cyan,line width=0.8pt,fill=white] (28.31875,25.2025) circle (4.0pt);
\node[black,anchor=center] at (28.31875,25.2025) {1};\draw[black,line width=0.8pt,line cap=round] (29.1625,25.0) -- (29.725,25.225);
\draw[cyan,line width=0.8pt,fill=white] (29.44375,25.1125) circle (4.0pt);
\node[black,anchor=center] at (29.44375,25.1125) {3};\draw[black,line width=0.8pt,line cap=round] (29.725,25.225) -- (29.725,26.35);
\draw[cyan,line width=0.8pt,fill=white] (29.725,25.7875) circle (4.0pt);
\node[black,anchor=center] at (29.725,25.7875) {7};\draw[black,line width=0.8pt,line cap=round] (28.0375,25.09) -- (29.1625,25.0);
\draw[cyan,line width=0.8pt,fill=white] (28.6,25.045) circle (4.0pt);
\node[black,anchor=center] at (28.6,25.045) {0};\draw[black,line width=0.8pt,line cap=round] (28.0375,25.09) -- (28.0375,26.215);
\draw[cyan,line width=0.8pt,fill=white] (28.0375,25.6525) circle (4.0pt);
\node[black,anchor=center] at (28.0375,25.6525) {2};\draw[black,line width=0.8pt,line cap=round] (29.1625,25.0) -- (29.1625,26.125);
\draw[cyan,line width=0.8pt,fill=white] (29.1625,25.5625) circle (4.0pt);
\node[black,anchor=center] at (29.1625,25.5625) {4};
\draw[black,line width=0.8pt,line cap=round] (28.0375,26.215) -- (28.6,26.44);
\draw[cyan,line width=0.8pt,fill=white] (28.31875,26.3275) circle (4.0pt);
\node[black,anchor=center] at (28.31875,26.3275) {9};
\draw[black,line width=0.8pt,line cap=round] (28.0375,26.215) -- (29.1625,26.125);
\draw[cyan,line width=0.8pt,fill=white] (28.6,26.17) circle (4.0pt);
\node[black,anchor=center] at (28.6,26.17) {8};
\draw[black,line width=0.8pt,line cap=round] (28.6,26.44) -- (29.725,26.35);
\draw[cyan,line width=0.8pt,fill=white] (29.1625,26.395) circle (4.0pt);
\node[black,anchor=center] at (29.1625,26.395) {\tiny 11};
\draw[black,line width=0.8pt,line cap=round] (29.1625,26.125) -- (29.725,26.35);
\draw[cyan,line width=0.8pt,fill=white] (29.44375,26.2375) circle (4.0pt);
\node[black,anchor=center] at (29.44375,26.2375) {\tiny 10};
\draw[green,line width=0.8pt,fill=white] (31.1875,25.8325) circle (4.0pt);
\node[black,anchor=center] at (31.1875,25.8325) {4};
\draw[black,line width=0.8pt,line cap=round] (30.625,25.315) -- (31.75,25.225);
\draw[black,line width=0.8pt,line cap=round] (30.625,25.315) -- (30.625,26.44);
\draw[green,line width=0.8pt,fill=white] (30.90625,25.1575) circle (4.0pt);
\draw[black,line width=0.8pt,line cap=round] (31.1875,25.0) -- (31.75,25.225);
\draw[black,line width=0.8pt,line cap=round] (30.0625,25.09) -- (31.1875,25.0);
\draw[black,line width=0.8pt,line cap=round] (30.0625,25.09) -- (30.625,25.315);
\draw[black,line width=0.8pt,line cap=round] (30.0625,26.215) -- (30.625,26.44);
\node[black,anchor=center] at (30.90625,25.1575) {0};
\draw[green,line width=0.8pt,fill=white] (30.34375,25.765) circle (4.0pt);
\node[black,anchor=center] at (30.34375,25.765) {2};
\draw[green,line width=0.8pt,fill=white] (31.46875,25.675) circle (4.0pt);
\node[black,anchor=center] at (31.46875,25.675) {3};
\draw[black,line width=0.8pt,line cap=round] (31.75,25.225) -- (31.75,26.35);
\draw[black,line width=0.8pt,line cap=round] (30.0625,25.09) -- (30.0625,26.215);
\draw[black,line width=0.8pt,line cap=round] (31.1875,25.0) -- (31.1875,26.125);
\draw[black,line width=0.8pt,line cap=round] (31.1875,26.125) -- (31.75,26.35);
\draw[black,line width=0.8pt,line cap=round] (30.625,26.44) -- (31.75,26.35);
\draw[black,line width=0.8pt,line cap=round] (32.65,25.315) -- (33.775,25.225);
\draw[green,line width=0.8pt,fill=white] (30.625,25.6075) circle (4.0pt);
\node[black,anchor=center] at (30.625,25.6075) {1};
\draw[green,line width=0.8pt,fill=white] (30.90625,26.2825) circle (4.0pt);
\node[black,anchor=center] at (30.90625,26.2825) {5};
\draw[black,line width=0.8pt,line cap=round] (30.0625,26.215) -- (31.1875,26.125);
\draw[black,line width=0.8pt,line cap=round] (32.65,25.315) -- (32.65,26.44);
\draw[black,line width=0.8pt,line cap=round] (32.0875,25.09) -- (32.65,25.315);
\draw[purple,line width=0.8pt,fill=white] (32.93125,25.72) circle (4.0pt);
\node[black,anchor=center] at (32.93125,25.72) {0};\draw[black,line width=0.8pt,line cap=round] (33.2125,25.0) -- (33.775,25.225);
\draw[black,line width=0.8pt,line cap=round] (33.775,25.225) -- (33.775,26.35);
\draw[black,line width=0.8pt,line cap=round] (32.0875,25.09) -- (33.2125,25.0);
\draw[black,line width=0.8pt,line cap=round] (32.0875,25.09) -- (32.0875,26.215);
\draw[black,line width=0.8pt,line cap=round] (33.2125,25.0) -- (33.2125,26.125);
\draw[black,line width=0.8pt,line cap=round] (32.0875,26.215) -- (33.2125,26.125);
\draw[black,line width=0.8pt,line cap=round] (32.0875,26.215) -- (32.65,26.44);
\draw[black,line width=0.8pt,line cap=round] (33.2125,26.125) -- (33.775,26.35);
\draw[black,line width=0.8pt,line cap=round] (32.65,26.44) -- (33.775,26.35);
\end{scope}
\begin{scope}[shift={(0,-4)}]
%\draw[-stealth,black,line width=0.8pt,line cap=round] (25.0,25.09) -- (25.375,25.06);
%\draw[-stealth,black,line width=0.8pt,line cap=round] (25.0,25.09) -- (25.1875,25.165);
%\draw[-stealth,black,line width=0.8pt,line cap=round] (25.0,25.09) -- (25.0,25.465);
%\node[black,anchor=west] at (25.405,25.0576) {$x$};\node[black,anchor=south west] at (25.1875,25.165) {$y$};\node[black,anchor=south] at (25.0,25.495) {$z$};
\draw[black,line width=0.8pt,line cap=round] (27.1375,25.0) -- (26.575,25.315);
\draw[black,line width=0.8pt,line cap=round] (26.0125,25.09) -- (26.575,25.315);
\draw[black,line width=0.8pt,line cap=round] (26.575,25.315) -- (26.575,26.44);
\draw[orange,line width=0.8pt,fill=white] (26.575,25.315) circle (4.0pt);
\node[black,anchor=center] at (26.575,25.315) {2};\draw[black,line width=0.8pt,line cap=round] (26.0125,25.09) -- (27.1375,25.0);
\draw[black,line width=0.8pt,line cap=round] (26.0125,25.09) -- (26.0125,26.215);
\draw[black,line width=0.8pt,line cap=round] (27.1375,25.0) -- (27.1375,26.125);
\draw[orange,line width=0.8pt,fill=white] (26.0125,25.09) circle (4.0pt);
\node[black,anchor=center] at (26.0125,25.09) {0};\draw[orange,line width=0.8pt,fill=white] (27.1375,25.0) circle (4.0pt);
\node[black,anchor=center] at (27.1375,25.0) {1};\draw[black,line width=0.8pt,line cap=round] (26.0125,26.215) -- (27.1375,26.125);
\draw[black,line width=0.8pt,line cap=round] (26.0125,26.215) -- (26.575,26.44);
\draw[black,line width=0.8pt,line cap=round] (27.1375,26.125) -- (26.575,26.44);
\draw[orange,line width=0.8pt,fill=white] (26.0125,26.215) circle (4.0pt);
\node[black,anchor=center] at (26.0125,26.215) {3};\draw[orange,line width=0.8pt,fill=white] (27.1375,26.125) circle (4.0pt);
\node[black,anchor=center] at (27.1375,26.125) {4};\draw[orange,line width=0.8pt,fill=white] (26.575,26.44) circle (4.0pt);
\node[black,anchor=center] at (26.575,26.44) {5};\draw[black,line width=0.8pt,line cap=round] (28.4875,25.0) -- (27.925,25.315);
\draw[cyan,line width=0.8pt,fill=white] (28.20625,25.1575) circle (4.0pt);
\node[black,anchor=center] at (28.20625,25.1575) {3};\draw[black,line width=0.8pt,line cap=round] (27.3625,25.09) -- (27.925,25.315);
\draw[cyan,line width=0.8pt,fill=white] (27.64375,25.2025) circle (4.0pt);
\node[black,anchor=center] at (27.64375,25.2025) {1};\draw[black,line width=0.8pt,line cap=round] (27.925,25.315) -- (27.925,26.44);
\draw[cyan,line width=0.8pt,fill=white] (27.925,25.8775) circle (4.0pt);
\node[black,anchor=center] at (27.925,25.8775) {5};\draw[black,line width=0.8pt,line cap=round] (27.3625,25.09) -- (28.4875,25.0);
\draw[cyan,line width=0.8pt,fill=white] (27.925,25.045) circle (4.0pt);
\node[black,anchor=center] at (27.925,25.045) {0};\draw[black,line width=0.8pt,line cap=round] (27.3625,25.09) -- (27.3625,26.215);
\draw[cyan,line width=0.8pt,fill=white] (27.3625,25.6525) circle (4.0pt);
\node[black,anchor=center] at (27.3625,25.6525) {2};\draw[black,line width=0.8pt,line cap=round] (28.4875,25.0) -- (28.4875,26.125);
\draw[cyan,line width=0.8pt,fill=white] (28.4875,25.5625) circle (4.0pt);
\node[black,anchor=center] at (28.4875,25.5625) {4};
\draw[black,line width=0.8pt,line cap=round] (27.3625,26.215) -- (27.925,26.44);
\draw[cyan,line width=0.8pt,fill=white] (27.64375,26.3275) circle (4.0pt);
\node[black,anchor=center] at (27.64375,26.3275) {7};\draw[black,line width=0.8pt,line cap=round] (28.4875,26.125) -- (27.925,26.44);
\draw[cyan,line width=0.8pt,fill=white] (28.20625,26.2825) circle (4.0pt);
\node[black,anchor=center] at (28.20625,26.2825) {8};
\draw[black,line width=0.8pt,line cap=round] (27.3625,26.215) -- (28.4875,26.125);
\draw[cyan,line width=0.8pt,fill=white] (27.925,26.17) circle (4.0pt);
\node[black,anchor=center] at (27.925,26.17) {6};
\draw[green,line width=0.8pt,fill=white] (29.275,25.135) circle (4.0pt);
\node[black,anchor=center] at (29.275,25.135) {0};\draw[green,line width=0.8pt,fill=white] (29.55625,25.72) circle (4.0pt);
\node[black,anchor=center] at (29.55625,25.72) {3};\draw[black,line width=0.8pt,line cap=round] (29.8375,25.0) -- (29.275,25.315);
\draw[green,line width=0.8pt,fill=white] (28.99375,25.765) circle (4.0pt);
\node[black,anchor=center] at (28.99375,25.765) {2};\draw[black,line width=0.8pt,line cap=round] (28.7125,25.09) -- (29.275,25.315);
\draw[black,line width=0.8pt,line cap=round] (29.275,25.315) -- (29.275,26.44);
\draw[green,line width=0.8pt,fill=white] (29.275,25.6075) circle (4.0pt);
\node[black,anchor=center] at (29.275,25.6075) {1};\draw[black,line width=0.8pt,line cap=round] (28.7125,25.09) -- (29.8375,25.0);
\draw[black,line width=0.8pt,line cap=round] (28.7125,25.09) -- (28.7125,26.215);
\draw[black,line width=0.8pt,line cap=round] (29.8375,25.0) -- (29.8375,26.125);
\draw[green,line width=0.8pt,fill=white] (29.275,26.26) circle (4.0pt);
\node[black,anchor=center] at (29.275,26.26) {4};\draw[black,line width=0.8pt,line cap=round] (28.7125,26.215) -- (29.8375,26.125);
\draw[black,line width=0.8pt,line cap=round] (28.7125,26.215) -- (29.275,26.44);
\draw[black,line width=0.8pt,line cap=round] (29.8375,26.125) -- (29.275,26.44);
\draw[black,line width=0.8pt,line cap=round] (31.1875,25.0) -- (30.625,25.315);
\draw[black,line width=0.8pt,line cap=round] (30.0625,25.09) -- (30.625,25.315);
\draw[black,line width=0.8pt,line cap=round] (30.625,25.315) -- (30.625,26.44);
\draw[purple,line width=0.8pt,fill=white] (30.625,25.6975) circle (4.0pt);
\node[black,anchor=center] at (30.625,25.6975) {0};\draw[black,line width=0.8pt,line cap=round] (30.0625,25.09) -- (31.1875,25.0);
\draw[black,line width=0.8pt,line cap=round] (30.0625,25.09) -- (30.0625,26.215);
\draw[black,line width=0.8pt,line cap=round] (31.1875,25.0) -- (31.1875,26.125);
\draw[black,line width=0.8pt,line cap=round] (30.0625,26.215) -- (31.1875,26.125);
\draw[black,line width=0.8pt,line cap=round] (30.0625,26.215) -- (30.625,26.44);
\draw[black,line width=0.8pt,line cap=round] (31.1875,26.125) -- (30.625,26.44);
\end{scope}
\begin{scope}[shift={(0,-5.8)}]
\draw[-stealth,black,line width=0.8pt,line cap=round] (25.0,25.09) -- (25.375,25.06);
\draw[-stealth,black,line width=0.8pt,line cap=round] (25.0,25.09) -- (25.1875,25.165);
\draw[-stealth,black,line width=0.8pt,line cap=round] (25.0,25.09) -- (25.0,25.465);
\node[black,anchor=west] at (25.405,25.0576) {$x$};\node[black,anchor=south west] at (25.1875,25.165) {$y$};\node[black,anchor=south] at (25.0,25.495) {$z$};\draw[black,line width=0.8pt,line cap=round] (26.575,25.315) -- (27.7,25.225);
\draw[black,line width=0.8pt,line cap=round] (26.0125,25.09) -- (26.575,25.315);
\draw[black,line width=0.8pt,line cap=round] (26.575,25.315) -- (26.0125,26.215);
\draw[orange,line width=0.8pt,fill=white] (26.575,25.315) circle (4.0pt);
\node[black,anchor=center] at (26.575,25.315) {2};\draw[black,line width=0.8pt,line cap=round] (27.1375,25.0) -- (27.7,25.225);
\draw[black,line width=0.8pt,line cap=round] (27.7,25.225) -- (26.0125,26.215);
\draw[orange,line width=0.8pt,fill=white] (27.7,25.225) circle (4.0pt);
\node[black,anchor=center] at (27.7,25.225) {3};\draw[black,line width=0.8pt,line cap=round] (26.0125,25.09) -- (27.1375,25.0);
\draw[black,line width=0.8pt,line cap=round] (26.0125,25.09) -- (26.0125,26.215);
\draw[black,line width=0.8pt,line cap=round] (27.1375,25.0) -- (26.0125,26.215);
\draw[orange,line width=0.8pt,fill=white] (26.0125,25.09) circle (4.0pt);
\node[black,anchor=center] at (26.0125,25.09) {0};\draw[orange,line width=0.8pt,fill=white] (27.1375,25.0) circle (4.0pt);
\node[black,anchor=center] at (27.1375,25.0) {1};\draw[orange,line width=0.8pt,fill=white] (26.0125,26.215) circle (4.0pt);
\node[black,anchor=center] at (26.0125,26.215) {4};\draw[black,line width=0.8pt,line cap=round] (28.4875,25.315) -- (29.6125,25.225);
\draw[cyan,line width=0.8pt,fill=white] (29.05,25.27) circle (4.0pt);
\node[black,anchor=center] at (29.05,25.27) {5};\draw[black,line width=0.8pt,line cap=round] (27.925,25.09) -- (28.4875,25.315);
\draw[cyan,line width=0.8pt,fill=white] (28.20625,25.2025) circle (4.0pt);
\node[black,anchor=center] at (28.20625,25.2025) {1};\draw[black,line width=0.8pt,line cap=round] (28.4875,25.315) -- (27.925,26.215);
\draw[cyan,line width=0.8pt,fill=white] (28.20625,25.765) circle (4.0pt);
\node[black,anchor=center] at (28.20625,25.765) {6};\draw[black,line width=0.8pt,line cap=round] (29.05,25.0) -- (29.6125,25.225);
\draw[cyan,line width=0.8pt,fill=white] (29.33125,25.1125) circle (4.0pt);
\node[black,anchor=center] at (29.33125,25.1125) {3};\draw[black,line width=0.8pt,line cap=round] (29.6125,25.225) -- (27.925,26.215);
\draw[cyan,line width=0.8pt,fill=white] (28.76875,25.72) circle (4.0pt);
\node[black,anchor=center] at (28.76875,25.72) {7};\draw[black,line width=0.8pt,line cap=round] (27.925,25.09) -- (29.05,25.0);
\draw[cyan,line width=0.8pt,fill=white] (28.4875,25.045) circle (4.0pt);
\node[black,anchor=center] at (28.4875,25.045) {0};\draw[black,line width=0.8pt,line cap=round] (27.925,25.09) -- (27.925,26.215);
\draw[cyan,line width=0.8pt,fill=white] (27.925,25.6525) circle (4.0pt);
\node[black,anchor=center] at (27.925,25.6525) {2};\draw[black,line width=0.8pt,line cap=round] (29.05,25.0) -- (27.925,26.215);
\draw[cyan,line width=0.8pt,fill=white] (28.4875,25.6075) circle (4.0pt);
\node[black,anchor=center] at (28.4875,25.6075) {4};
\draw[black,line width=0.8pt,line cap=round] (30.4,25.315) -- (31.525,25.225);
\draw[green,line width=0.8pt,fill=white] (30.68125,25.1575) circle (4.0pt);
\node[black,anchor=center] at (30.68125,25.1575) {0};
\draw[green,line width=0.8pt,fill=white] (30.5875,25.585) circle (4.0pt);
\node[black,anchor=center] at (30.5875,25.585) {4};
\draw[green,line width=0.8pt,fill=white] (30.025,25.54) circle (4.0pt);
\node[black,anchor=center] at (30.025,25.54) {2};\draw[black,line width=0.8pt,line cap=round] (29.8375,25.09) -- (30.4,25.315);
\draw[black,line width=0.8pt,line cap=round] (30.4,25.315) -- (29.8375,26.215);
\draw[green,line width=0.8pt,fill=white] (30.775,25.48) circle (4.0pt);
\node[black,anchor=center] at (30.775,25.48) {3};\draw[black,line width=0.8pt,line cap=round] (30.9625,25.0) -- (31.525,25.225);
\draw[black,line width=0.8pt,line cap=round] (31.525,25.225) -- (29.8375,26.215);
\draw[green,line width=0.8pt,fill=white] (30.2125,25.435) circle (4.0pt);
\node[black,anchor=center] at (30.2125,25.435) {1};\draw[black,line width=0.8pt,line cap=round] (29.8375,25.09) -- (30.9625,25.0);
\draw[black,line width=0.8pt,line cap=round] (29.8375,25.09) -- (29.8375,26.215);
\draw[black,line width=0.8pt,line cap=round] (30.9625,25.0) -- (29.8375,26.215);
\draw[black,line width=0.8pt,line cap=round] (32.3125,25.315) -- (33.4375,25.225);
\draw[black,line width=0.8pt,line cap=round] (31.75,25.09) -- (32.3125,25.315);
\draw[black,line width=0.8pt,line cap=round] (32.3125,25.315) -- (31.75,26.215);
\draw[purple,line width=0.8pt,fill=white] (32.425,25.369) circle (4.0pt);
\node[black,anchor=center] at (32.425,25.369) {0};\draw[black,line width=0.8pt,line cap=round] (32.875,25.0) -- (33.4375,25.225);
\draw[black,line width=0.8pt,line cap=round] (33.4375,25.225) -- (31.75,26.215);
\draw[black,line width=0.8pt,line cap=round] (31.75,25.09) -- (32.875,25.0);
\draw[black,line width=0.8pt,line cap=round] (31.75,25.09) -- (31.75,26.215);
\draw[black,line width=0.8pt,line cap=round] (32.875,25.0) -- (31.75,26.215);
\end{scope}
\end{tikzpicture}
\caption{The numbering of the sub-entities that DefElement uses for three-dimensional reference cells.}
\label{reference:3d-cells}
\end{figure}

\subsection{The degree of an element}\label{sec:definition:degree}
In this section, we define the degree of a finite element.
We define two spaces on each of the cells in \cref{def:reference-cells}:
the natural degree $k$ polynomial (or on a pyramid rational polynomial, or \emph{rationomial}) space;
and the complete polynomial space.

\begin{definition}\label{def:polyspaces}
The natural polynomial spaces of degree $k$ on
an interval (\hspace{1pt}$\mathbb{P}^{\interval}_k$),
a triangle (\hspace{1pt}$\mathbb{P}^{\rtriangle}_k$),
a quadrilateral (\hspace{1pt}$\mathbb{P}^{\quadrilateral}_k$),
a tetrahedron (\hspace{1pt}$\mathbb{P}^{\tetrahedron}_k$),
a hexahedron (\hspace{1pt}$\mathbb{P}^{\hexahedron}_k$),
and a prism (\hspace{1pt}$\mathbb{P}^{\prism}_k$), and the natural rationomial space of degree $k$ on
a pyramid (\hspace{1pt}$\mathbb{P}^{\pyramid}_k$) are defined by
\begin{align*}
\mathbb{P}^{\interval}_k&=\operatorname{span}\left\{x^{p_0}\,\middle|\,p_0\in\mathbb{N},\,p_0\leqslant k\right\},\\
\mathbb{P}^{\rtriangle}_k&=\operatorname{span}\left\{x^{p_0}y^{p_1}\,\middle|\,p_0,p_1\in\mathbb{N}_0,\,p_0+p_1\leqslant k\right\},\\
\mathbb{P}^{\quadrilateral}_k&=\operatorname{span}\left\{x^{p_0}y^{p_1}\,\middle|\,p_0,p_1\in\mathbb{N}_0,\,p_0\leqslant k,\,p_1\leqslant k\right\},\\
\mathbb{P}^{\tetrahedron}_k&=\operatorname{span}\left\{x^{p_0}y^{p_1}z^{p_2}\,\middle|\,p_0,p_1,p_2\in\mathbb{N}_0,\,p_0+p_1+p_2\leqslant k\right\},\\
\mathbb{P}^{\hexahedron}_k&=\operatorname{span}\left\{x^{p_0}y^{p_1}z^{p_2}\,\middle|\,p_0,p_1,p_2\in\mathbb{N}_0,\,p_0\leqslant k,\,p_1\leqslant k,\,p_2\leqslant k\right\},\\
\mathbb{P}^{\prism}_k&=\operatorname{span}\left\{x^{p_0}y^{p_1}z^{p_2}\,\middle|\,p_0,p_1,p_2\in\mathbb{N}_0,\,p_0+p_1\leqslant k,\,p_2\leqslant k\right\},\\
\mathbb{P}^{\pyramid}_k&=\operatorname{span}\left\{\frac{x^{p_0}y^{p_1}z^{p_2}}{(1-z)^{p_0+p_1}}\,\middle|\,p_0,p_1,p_2\in\mathbb{N}_0,\,p_0\leqslant k,\,p_1\leqslant k,\,p_2\leqslant k\right\},
\end{align*}
where $\mathbb{N}_0$ is the set of non-negative integers.

For $d\in\mathbb{N}$, the natural polynomial spaces of degree $k$ on
a simplex (\hspace{1pt}$\mathbb{P}^{\simplex{d}}_k$\hspace{-1pt})
and a cube (\hspace{1pt}$\mathbb{P}^{\hypercube{d}}_k$\hspace{-1pt}) are defined by
\begin{align*}
\mathbb{P}^{\simplex{d}}_k&=\operatorname{span}\left\{\prod_{i=0}^{d-1}x_i^{p_i}\,\middle|\,p_0,p_1,\dots,p_{d-1}\in\mathbb{N}_0\textup{ and }\sum_{i=0}^{d-1}p_i\leqslant k\right\},\\
\mathbb{P}^{\hypercube{d}}_k&=\operatorname{span}\left\{\prod_{i=0}^{d-1}x_i^{p_i}\,\middle|\,p_0,p_1,\dots,p_{d-1}\in\mathbb{N}_0\textup{ and }\max_{i=0}^{d-1}p_i\leqslant k\right\},\\
\end{align*}
\end{definition}
\begin{definition}\label{def:complete-polyspace}
For a cell with topological dimension $d$, the complete polynomial space degree $k$ is defined by
\(\mathbb{P}_k=\mathbb{P}^{\simplex{d}}_k\).
For $d=1$ to $d=3$, we note that
\begin{align*}
\mathbb{P}_k&=\begin{cases}
\mathbb{P}^{\interval}_k&\text{if the topological dimension of the cell is 1},\\
\mathbb{P}^{\rtriangle}_k&\text{if the topological dimension of the cell is 2},\\
\mathbb{P}^{\tetrahedron}_k&\text{if the topological dimension of the cell is 3}.
\end{cases}
\end{align*}
\end{definition}

On every cell except the pyramid, the natural polynomials spaces are the spaces spanned by the degree $k$ Lagrange
finite element on the cell. On the pyramid, however, the Lagrange element spans the smaller space
\cite{rationomials,cockburn-fu}
\[
\mathbb{P}^{\pyramid,\text{Lagrange}}_k=\operatorname{span}\left\{\frac{x^{p_0}y^{p_1}z^{p_2}}{(1-z)^{\min(p_0,p_1)}}\,\middle|\,p_0,p_1,p_2\in\mathbb{N}_0,\,p_0+p_2\leqslant k,\,p_1+p_2\leqslant k\right\}.
\]
This close relationship with the Lagrange element inspires the naming convention
we use for our notions of the degree of an element: 
we now define the polynomial and Lagrange subdegrees and superdegrees of a finite element.

\begin{definition}[polynomial subdegree]\label{def:poly-sub}
The polynomial subdegree of a finite element $(\refel,\polyspace,\dualbasis)$ is the largest value of $k$ such that
$$\mathbb{P}_k\subseteq\polyspace.$$
If $\mathbb{P}_0\not\subseteq\polyspace$, then the polynomial subdegree is undefined.
\end{definition}

\begin{definition}[polynomial superdegree]\label{def:poly-super}
The polynomial superdegree of a finite element $(\refel,\polyspace,\dualbasis)$ is the smallest value of $k$ such that
$$\polyspace\subseteq\mathbb{P}_k.$$
If $\forall k\in\mathbb{N}_0$, $\polyspace\not\subseteq\mathbb{P}_k$, then the polynomial superdegree is undefined.
\end{definition}

\begin{definition}[Lagrange subdegree]\label{def:L-sub}
The Lagrange subdegree of a finite element $(\refel,\polyspace,\dualbasis)$ is the largest value of $k$ such that
$$\mathbb{P}^\refel_k\subseteq\polyspace.$$
If $\mathbb{P}^\refel_0\not\subseteq\polyspace$, then the Lagrange subdegree is undefined.
\end{definition}

\begin{definition}[Lagrange superdegree]\label{def:L-super}
The Lagrange superdegree of a finite element $(\refel,\polyspace,\dualbasis)$ is the smallest value of $k$ such that
$$\polyspace\subseteq\mathbb{P}^\refel_k.$$
If $\forall k\in\mathbb{N}_0$, $\polyspace\not\subseteq\mathbb{P}^\refel_k$, then the Lagrange superdegree is undefined.
\end{definition}

We note that on simplex cells the Lagrange and polynomial degrees will always be equal, but
they will often disagree on other cell types.

For many elements, the polynomial superdegree and Lagrange subdegree are not well-behaved:
for example, for serendipity elements \cite{serendipity} on a quadrilateral cell, the Lagrange subdegree is always 1;
and for Arnold--Boffi--Falk elements \cite{mapping2} on a quadrilateral cell, the polynomial superdegree is $2k+2$, where $k$ is the polynomial subdegree, and so jumps by 2 every time the element's
degree is increased.
It is therefore typical to use either the polynomial subdegree or the Lagrange superdegree
as the canonical degree of an element. In general on DefElement, we index elements by their polynomial subdegree, unless this is undefined in which case a sensible alternative is chosen
(for example, bubble elements have an undefined polynomial subdegree, so are indexed using the Lagrange superdegree instead).
Note that for some elements (including N\'ed\'elec first kind and Raviart--Thomas), this indexing is different to that used by some libraries, including FEniCS \cite{dolfinx,basix}.

Let $(\refel,\polyspace,\dualbasis)$ be a finite element
and let $\{p_0,\dots,p_{n-1}\}$ be a basis of $\polyspace$. If the finite element has Lagrange superdegree $k$, then the basis can be represented by
\begin{equation}
\begin{pmatrix}
p_0\\\vdots\\p_{n-1}
\end{pmatrix}
=\mat{C}
\begin{pmatrix}
q_0\\\vdots\\q_{m-1}
\end{pmatrix}
\end{equation}
for some $\mat{C}=\mathbb{R}^{n\times m}$, where $\{q_0,\dots,q_{m-1}\}$ is a basis of $\mathbb{P}^\refel_k$.
Using \cref{eq:dual-inv}, we can now represent the finite element basis functions as
\begin{equation}
\begin{pmatrix}
\phi_0\\\vdots\\\phi_{n-1}
\end{pmatrix}
=
\mat{D}\inv
\mat{C}
\begin{pmatrix}
q_0\\\vdots\\q_{m-1}
\end{pmatrix}.
\end{equation}
Many implementations store the basis functions by computing and storing the matrix of expansion coefficients $\mat{D}\inv\mat{C}\in\mathbb{R}^{n\times m}$
\cite{FIAT,basix,firedrake,fenics-book,dolfinx}.
The basis functions can then be evaluated at a set of points by evaluating the functions $q_0,\dots,q_{m-1}$ at these points, and then
recombining these with the stored matrix of expansion coefficients. As the functions $q_0,\dots,q_{m-1}$ are a basis of the general space $\mathbb{P}^\refel_k$,
the code to evaluate them is not specific for each finite element, and so the same implementation of the spaces can be used for all
elements.
It is common in implementations to take $\{q_0,\dots,q_{m-1}\}$ to be a basis for which recurrence
formulae exist that can be used to quickly evaluate $\{q_0,\dots,q_{m-1}\}$ at a set of points \cite{dubiner1991,spectral/hp,kirby2010singularity}: for example,
the Legendre polynomials (mapped to $[0,1]$ if the reference cells in \cref{def:reference-cells} are used) could be used for interval cells.
In order to ensure that the basis functions can be stored in this way, many finite element implementations implicitly rely on every element having a well-defined Lagrange superdegree.

The definitions of degree can be extended to macroelements by defining spaces on the macroelement that
are equal to those defined in \cref{def:polyspaces} on each sub-cell, with appropriate continuity between sub-cells.

%%%%%%%%%%%%%%%%%%%%%%%%%%%%%%%%%%%%%%%%%%%%%%%%%%%%%%%%%%%%%%%%%%%%%%%%%%%%%%%%%%%%%%%%%%%%%%
\section{Element variants}\label{sec:variants}
For many elements, it is possible to define a number of different variants of the element. In many cases,
there is no single variant that is always the best variant, and so implementations may support multiple variants.
For example, for Lagrange elements, it is common to use equally spaced points to define low-degree elements, and
mesh storage formats that use Lagrange elements to represent mesh geometry often assume this equal spacing. However,
if equally spaced points are used for high-degree elements, the basis functions will exhibit Runge's phenomenon
and become numerically unstable \cite{runge}, as shown in \cref{fig:runge}. Hence, for higher-degree elements, it is best to use an alternative set of points, such as
Gauss--Lobatto--Legendre (GLL) points or Chebyshev points \cite[chapter 13]{approximation-theory-practice}.

\begin{figure}
\centering
\vspace{-1.5cm}

\begin{tikzpicture}[x=1.8cm,y=1.8cm]
\clip (24.9,23.5) rectangle (30.3,29.5);

\draw[cyan,line width=0.8pt,line cap=round] (25.0,26.265846399999997) -- (27.25,26.265846399999997);
\fill[magenta] (26.285714285714285,26.265846399999997) circle (3pt);
\draw[orange,line width=0.8pt,line cap=round] (25.0,26.265846399999997) .. controls (25.15,20.523658900000004) and (25.3,26.988282799999997) .. (25.45,27.044828799999998);
\draw[orange,line width=0.8pt,line cap=round] (25.45,27.044828799999998) .. controls (25.6,27.1013748) and (25.75,25.475760599999997) .. (25.9,25.8932896);
\draw[orange,line width=0.8pt,line cap=round] (25.9,25.8932896) .. controls (26.05,26.310818599999998) and (26.2,28.712573199999998) .. (26.35,28.5011872);
\draw[orange,line width=0.8pt,line cap=round] (26.35,28.5011872) .. controls (26.5,28.2898012) and (26.65,24.819361500000003) .. (26.8,25.0);
\draw[orange,line width=0.8pt,line cap=round] (26.8,25.0) .. controls (26.95,25.180638499999997) and (27.1,33.92209639999999) .. (27.25,26.265846399999997);

\begin{scope}[shift={(3,26.265846399999997-25.052505981994834)}]
\draw[cyan,line width=0.8pt,line cap=round] (25.0,25.052505981994834) -- (27.25,25.052505981994834);
\draw[orange,line width=0.8pt,line cap=round] (25.0,25.052505981994834) .. controls (25.15,24.52559554650897) and (25.3,25.329238036380193) .. (25.45,25.077853996395923);
\draw[orange,line width=0.8pt,line cap=round] (25.45,25.077853996395923) .. controls (25.6,24.82646995641165) and (25.75,24.636239774592816) .. (25.9,25.1035825985935);
\draw[orange,line width=0.8pt,line cap=round] (25.9,25.1035825985935) .. controls (26.05,25.570925422594186) and (26.2,27.26669163798249) .. (26.35,27.300611085690374);
\draw[orange,line width=0.8pt,line cap=round] (26.35,27.300611085690374) .. controls (26.5,27.334530533398258) and (26.65,25.514925365406807) .. (26.8,25.0);
\draw[orange,line width=0.8pt,line cap=round] (26.8,25.0) .. controls (26.95,24.485074634593193) and (27.1,25.858363762794557) .. (27.25,25.052505981994834);
\fill[magenta] (26.36046162014029,25.052505981994834) circle (3pt);
\end{scope}

\fill[white,opacity=0.8] (-50,-50) rectangle (50,24.9);
\end{tikzpicture}

\vspace{-2.2cm}

\caption{A basis function of a degree 7 Lagrange element on an interval when defined using
equispaced points (left) and GLL points (right). The basis function defined using equispaced
points exhibits Runge's phenomenon: there are large function values far from the marked point
where the basis function is equal to one. Similar issues can be observed when defining the basis on other cell types.}
\label{fig:runge}
\end{figure}
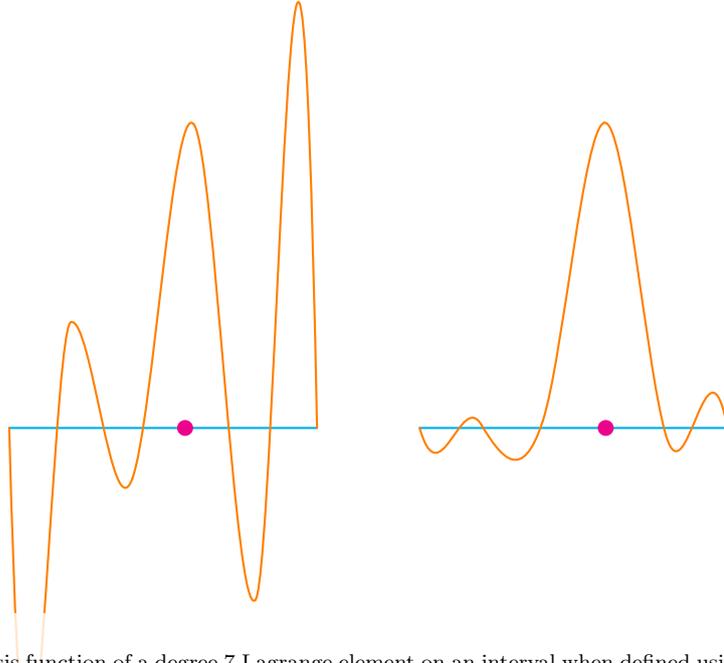

In general, variants of finite elements are defined as follows.

\begin{definition}[Variant of an element]\label{def:variant}
Let $(\refel,\polyspace,\dualbasis)$ and $(\tilde{\refel},\tilde{\polyspace},\tilde{\dualbasis})$ be reference-mapped finite elements.
$(\tilde{\refel},\tilde{\polyspace},\tilde{\dualbasis})$ is a variant of 
$(\refel,\polyspace,\dualbasis)$ if:
\begin{itemize}
\item $\tilde{\refel} = \refel$;
\item $\tilde{\polyspace} = \polyspace$;
\item For all sub-entities $\subentity\subseteq\refel$,
$\dualbasis|_{\subentity}$ and $\tilde\dualbasis|_{\subentity}$ are
equivalent sets of functionals,
where $\dualbasis|_{\subentity}$ and $\tilde\dualbasis|_{\subentity}$ are the sets of functionals in $\dualbasis$
and $\tilde\dualbasis$ that are associated with $\subentity$.
\end{itemize}
\end{definition}

We will now show that a set of alternative criteria is equivalent to \cref{def:variant}, but we must first define
the controlled and uncontrolled trace of a finite element's polynomial space on a sub-entity as follows.

\begin{definition}\label{def:traces}
Let $(\refel,\polyspace,\dualbasis)$ be a reference-mapped finite element. Let $\subentity\subseteq\refel$ be a sub-entity of $\refel$ (i.e.~a vertex, edge, or face of $\refel$, or $\refel$ itself).
Let $\dualbasis = \{l_0,\dots,l_{n-1}\}$ and let $\phi_0,\dots,\phi_{n-1}\in\polyspace$ be the basis functions of the finite element.
The uncontrolled trace space $\unctrace{\polyspace}{\subentity}{\dualbasis}$ is defined by
$$
\unctrace{\polyspace}{\subentity}{\dualbasis} =
\left\{\left.p\right|_{\subentity}\,:\,p\in\polyspace\text{ and }\forall l_i\in\dualbasis|_{\closure{\subentity}}\text{ }l_i(p)=0\right\},
$$
where $\left.\dualbasis\right|_{\closure{\subentity}}$ is the set of all functionals in $\dualbasis$ associated
with the closure of $\subentity$.
The controlled trace space $\ctrace{\polyspace}{\subentity}{\dualbasis}$ is defined by
\begin{align*}
\ctrace{\polyspace}{\subentity}{\dualbasis} &=
\left[
\unctrace{\polyspace}{\subentity}{\dualbasis}
\right]^\perp
\\&=
\left\{\left.p\right|_{\subentity}\,:\,p\in\polyspace\text{ and }\forall q\in\unctrace{\polyspace}{\subentity}{\dualbasis}
\text{ }\left\langle p,q\right\rangle_{\subentity}=0\right\},
\end{align*}
where $\left\langle p,q\right\rangle_{\subentity}:=\int_Ep\cdot q$.
\end{definition}

Note that it is a standard result in functional analysis
(see e.g.~\cite[theorem 3.3-4]{kreyszig}) that if $V$ is a function space and $W$ is a subspace of $V$, then
$V=W\oplus W^\perp$. It follows that
\begin{align}
\left.\polyspace\right|_{\subentity}
&=
\unctrace{\polyspace}{\subentity}{\dualbasis}\oplus\ctrace{\polyspace}{\subentity}{\dualbasis}.
\label{oplus}
\end{align}

As an example, let $(\refel,\polyspace,\dualbasis)$ be a degree $k$ Lagrange element.
For any sub-entity
$\subentity\subset\refel$, the restricted space
$\left.\polyspace\right|_{\subentity}$ is the Lagrange space $\mathbb{P}_k^\subentity$.
The uncontrolled trace space $\unctrace{\polyspace}{\subentity}{\dualbasis}$
is equal to $\{0\}$, as any non-zero polynomial on $\subentity$ would make at least one functional
associated with $\closure{\subentity}$ non-zero.
It follows from \cref{oplus} that 
the controlled trace space $\ctrace{\polyspace}{\subentity}{\dualbasis}$
is $\mathbb{P}_k^\subentity$.

For a less trivial example, let $(\rtriangle[2.3mm],\polyspace,\dualbasis)$ be a degree
1 Raviart--Thomas element \cite{rt} on a triangle and let $\subentity$ be the edge with
vertices at $(0,0)$ and $(1,0)$. The polynomial space for this element is
$$\polyspace=\operatorname{span}\left\{
\begin{pmatrix}1\\0\end{pmatrix},
\begin{pmatrix}x\\0\end{pmatrix},
\begin{pmatrix}y\\0\end{pmatrix},
\begin{pmatrix}0\\1\end{pmatrix},
\begin{pmatrix}0\\x\end{pmatrix},
\begin{pmatrix}0\\y\end{pmatrix},
\begin{pmatrix}x^2\\xy\end{pmatrix},
\begin{pmatrix}xy\\y^2\end{pmatrix}
\right\},$$
and so
$$\left.\polyspace\right|_{\subentity}=\operatorname{span}\left\{
\begin{pmatrix}1\\0\end{pmatrix},
\begin{pmatrix}x\\0\end{pmatrix},
\begin{pmatrix}0\\1\end{pmatrix},
\begin{pmatrix}0\\x\end{pmatrix},
\begin{pmatrix}x^2\\0\end{pmatrix}
\right\}.$$
There are no functionals associated with vertices in this element, and so the only functionals
associated with $\closure{\subentity}$ are the integral moments of the normal components
that are associated with $\subentity$. These functionals will all be zero if applied to functions
that only have a tangential component and so
\begin{align*}
\unctrace{\polyspace}{\subentity}{\dualbasis}&=\operatorname{span}\left\{
\begin{pmatrix}1\\0\end{pmatrix},
\begin{pmatrix}x\\0\end{pmatrix},
\begin{pmatrix}x^2\\0\end{pmatrix}
\right\},\\
\ctrace{\polyspace}{\subentity}{\dualbasis}&=\operatorname{span}\left\{
\begin{pmatrix}0\\1\end{pmatrix},
\begin{pmatrix}0\\x\end{pmatrix}
\right\}.
\end{align*}
For this element, the normal components are continuous between cells:
the controlled space $\ctrace{\polyspace}{\subentity}{\dualbasis}$ is the space of functions that
are continuous between cells, while the uncontrolled space contains the functions that may jump
between cells.

For the remainder of this section, we will aim to prove that if the uncontrolled trace spaces
of two elements are equal for two elements with the same reference cell and polynomial space,
then they are variants of each other
(\cref{lemma:verification}).
We begin by proving some prelimary lemmas that we will use.

\begin{lemma}\label{lemma:first-unc}
Let $(\refel,\polyspace,\dualbasis)$ be a reference-mapped finite element.
If $\subentity$ is a sub-entity of $\refel$, then
\[
\unctrace{\polyspace}{\subentity}{\dualbasis} =
\operatorname{span}\left\{\left.\phi_i\right|_{\subentity}\,:\,l_i\not\in\left.\dualbasis\right|_{\closure{\subentity}}\right\}.
\]
\end{lemma}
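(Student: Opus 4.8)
The plan is to reduce the whole statement to the biorthogonality relation $l_i(\phi_j)=\delta_{ij}$, using that $\{\phi_0,\dots,\phi_{n-1}\}$ is a basis of $\polyspace$ dual to the basis $\dualbasis$ of $\dual{\polyspace}$. I would write $S:=\{i\,:\,l_i\in\left.\dualbasis\right|_{\closure{\subentity}}\}$ for the index set of functionals associated with the closure of $\subentity$ and $S^c:=\{0,\dots,n-1\}\setminus S$ for its complement, so that the right-hand side of the claim is exactly $\operatorname{span}\{\left.\phi_i\right|_{\subentity}\,:\,i\in S^c\}$. I would also record one structural fact up front: the set $W:=\{p\in\polyspace\,:\,l_i(p)=0\text{ for all }i\in S\}$ is a linear subspace of $\polyspace$ (an intersection of kernels of linear functionals), and restriction to $\subentity$ is linear, so $\unctrace{\polyspace}{\subentity}{\dualbasis}=\left.W\right|_{\subentity}$ is itself a linear space, which legitimises comparing it with a span.

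For the inclusion $\supseteq$, I would observe that for each $i\in S^c$ and each $j\in S$ we have $l_j(\phi_i)=\delta_{ij}=0$, since $i\neq j$. Hence $\phi_i\in W$, so $\left.\phi_i\right|_{\subentity}\in\unctrace{\polyspace}{\subentity}{\dualbasis}$. Because the uncontrolled trace space is linear, it contains the span of all these restrictions, giving the inclusion.

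For the inclusion $\subseteq$, I would take an arbitrary $p\in W$ and expand it in the basis as $p=\sum_{j=0}^{n-1}c_j\phi_j$. Applying the functional $l_i$ and using biorthogonality yields $c_i=l_i(p)$; in particular $c_i=0$ for every $i\in S$ by the definition of $W$. Therefore $p=\sum_{i\in S^c}l_i(p)\,\phi_i$, and restricting to $\subentity$ places $\left.p\right|_{\subentity}$ in $\operatorname{span}\{\left.\phi_i\right|_{\subentity}\,:\,i\in S^c\}$. Combining the two inclusions gives the claimed equality.

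There is no serious obstacle here; the only point that must be stated carefully is that, because $\dualbasis$ is a basis of $\dual{\polyspace}$, the functions $\{\phi_i\}$ form a basis of $\polyspace$ and the coefficients of any $p\in\polyspace$ in this basis are recovered as $l_i(p)$. This biorthogonal coefficient formula is the entire engine of the argument, and it is simply the content of the defining relation $l_i(\phi_j)=\delta_{ij}$ in \cref{def:ciarlet}.
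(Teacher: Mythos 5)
Your proof is correct and takes essentially the same route as the paper's: both inclusions rest on the biorthogonality relation $l_i(\phi_j)=\delta_{ij}$, with $\subseteq$ obtained by expanding an element in the basis $\{\phi_j\}$ and showing the coefficients indexed by $\left.\dualbasis\right|_{\closure{\subentity}}$ vanish, and $\supseteq$ by noting each remaining $\left.\phi_i\right|_{\subentity}$ lies in $\unctrace{\polyspace}{\subentity}{\dualbasis}$ by definition. Your version is marginally more careful in distinguishing functions in $\polyspace$ from their restrictions to $\subentity$ (the paper applies functionals directly to restricted basis functions), but this is a cosmetic difference, not a different argument.
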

\begin{proof}
Let $p\in\unctrace{\polyspace}{\subentity}{\dualbasis}$. As $p\in\left.\polyspace\right|_{\subentity}:=\operatorname{span}\left\{\left.\phi_0\right|_{\subentity},\dots,\left.\phi_{n-1}\right|_{\subentity}\right\}$, we can write
\[p=\sum_{j=0}^{n-1}\alpha_j\left.\phi_j\right|_{\subentity},\]
for some $\alpha_0,\dots,\alpha_{n-1}\in\mathbb{R}$.
If $l_i\in\dualbasis|_{\closure{\subentity}}$, then
\[
0 = l_i(p)
=\sum_{j=0}^{n-1}\alpha_jl_i\left(\left.\phi_j\right|_{\subentity}\right)
=\alpha_i.
\]
This means that $p\in\operatorname{span}\left\{\left.\phi_i\right|_{\subentity}\,:\,l_i\not\in\left.\dualbasis\right|_{\closure{\subentity}}\right\}$,
and so
\[\unctrace{\polyspace}{\subentity}{\dualbasis} \subseteq
\operatorname{span}\left\{\left.\phi_i\right|_{\subentity}\,:\,l_i\not\in\left.\dualbasis\right|_{\closure{\subentity}}\right\}.
\]
From the definition of the uncontrolled trace (\cref{def:traces}), it can be seen that if
$l_i\not\in\left.\dualbasis\right|_{\closure{\subentity}}$ then $\left.\phi_i\right|_{\subentity}\in\unctrace{\polyspace}{\subentity}{\dualbasis}$, and so 
\[\unctrace{\polyspace}{\subentity}{\dualbasis} \supseteq
\operatorname{span}\left\{\left.\phi_i\right|_{\subentity}\,:\,l_i\not\in\left.\dualbasis\right|_{\closure{\subentity}}\right\}.
\]
\end{proof}

\begin{lemma}\label{lemma:zeroth}
Let \(\left.\polyspace\right|_{\subentity}:=\operatorname{span}\left\{\left.\phi_0\right|_{\subentity},\dots,\left.\phi_{n-1}\right|_{\subentity}\right\}\).
Given $f\in\left.\polyspace\right|_{\subentity}$, there are unique functions
$f^\parallel\in\ctrace{\polyspace}{\subentity}{\dualbasis}$
and
$f^\perp\in\unctrace{\polyspace}{\subentity}{\dualbasis}$
such that
\[f=f^\perp+f^\parallel.\]
Moreover, the operator $P_{\subentity}:\left.\polyspace\right|_{\subentity}\to\ctrace{\polyspace}{\subentity}{\dualbasis}$
defined by $P_{\subentity}:f\mapsto f^\parallel$ is a linear projection.
\end{lemma}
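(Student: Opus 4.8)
The plan is to read everything off the orthogonal decomposition already recorded in \cref{oplus}, namely $\left.\polyspace\right|_{\subentity} = \unctrace{\polyspace}{\subentity}{\dualbasis}\oplus\ctrace{\polyspace}{\subentity}{\dualbasis}$, which holds because $\ctrace{\polyspace}{\subentity}{\dualbasis}$ is defined to be the orthogonal complement of $\unctrace{\polyspace}{\subentity}{\dualbasis}$ inside $\left.\polyspace\right|_{\subentity}$ with respect to $\langle\cdot,\cdot\rangle_{\subentity}$. Existence of the decomposition is then immediate: the right-hand side of \cref{oplus} is the sum of the two subspaces, so every $f\in\left.\polyspace\right|_{\subentity}$ can be written as $f=f^\perp+f^\parallel$ with $f^\perp\in\unctrace{\polyspace}{\subentity}{\dualbasis}$ and $f^\parallel\in\ctrace{\polyspace}{\subentity}{\dualbasis}$.

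For uniqueness I would suppose $f=f^\perp+f^\parallel=g^\perp+g^\parallel$ are two such decompositions and consider $f^\perp-g^\perp=g^\parallel-f^\parallel$. This difference lies in both summands, hence in their intersection; since \cref{oplus} is a direct sum this intersection is $\{0\}$, so $f^\perp=g^\perp$ and $f^\parallel=g^\parallel$. This simultaneously proves uniqueness and shows that $P_{\subentity}$ is well defined.

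Linearity and the projection property then follow purely from uniqueness. For linearity, given $f,g\in\left.\polyspace\right|_{\subentity}$ and scalars $\alpha,\beta$, the vector $\alpha f+\beta g$ splits as $(\alpha f^\perp+\beta g^\perp)+(\alpha f^\parallel+\beta g^\parallel)$; since each summand space is closed under linear combinations, uniqueness identifies these two pieces as the $\perp$- and $\parallel$-parts of $\alpha f+\beta g$, giving $P_{\subentity}(\alpha f+\beta g)=\alpha P_{\subentity}f+\beta P_{\subentity}g$. For the projection property, any $h\in\ctrace{\polyspace}{\subentity}{\dualbasis}$ has the trivial decomposition $h=0+h$, so $P_{\subentity}h=h$; applying this with $h=P_{\subentity}f$ yields $P_{\subentity}^2=P_{\subentity}$.

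I expect no genuine obstacle: the statement is the abstract fact that an orthogonal direct-sum decomposition induces a well-defined linear projection onto one summand, and all of the analytic content---that the orthogonal complement exists and splits the space---has already been supplied by \cref{oplus}. The only point needing a word of care is the triviality of the intersection of the two summands, which is exactly where the directness of the sum (equivalently, that no nonzero vector is orthogonal to itself) is used.
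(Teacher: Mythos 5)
Your proposal is correct and rests on exactly the same foundation as the paper's proof: the paper disposes of this lemma by citing the orthogonal-decomposition theorem (Kreyszig, theorem 3.3-4) and remarking that $P_{\subentity}$ is the associated orthogonal projection, which is precisely the content you unpack from \cref{oplus}. The details you fill in---uniqueness via the trivial intersection of the summands, linearity, and idempotence---are the routine linear-algebra steps that the paper delegates to the textbook citation, and they are carried out correctly.
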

\begin{proof}
See \cite[theorem 3.3-4]{kreyszig}, and note that $P_{\subentity}$ is an orthogonal projection
operator, as defined in \cite[section 3.3]{kreyszig}.
\end{proof}

\begin{lemma}\label{lemma:first-c}
Let $(\refel,\polyspace,\dualbasis)$ be a reference-mapped finite element.
If $\subentity$ is a sub-entity of $\refel$, then
\begin{align*}
\ctrace{\polyspace}{\subentity}{\dualbasis} &=
\operatorname{span}\left\{P_{\subentity}\left(\left.\phi_i\right|_{\subentity}\right)
\,:\,l_i\in\left.\dualbasis\right|_{\closure{\subentity}}\right\},
\end{align*}
where \(P_{\subentity}\) is defined as in \cref{lemma:zeroth}.
\end{lemma}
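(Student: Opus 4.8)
The plan is to exploit two facts already established: by the definition in \cref{lemma:zeroth}, the restrictions $\left.\phi_0\right|_{\subentity},\dots,\left.\phi_{n-1}\right|_{\subentity}$ span $\left.\polyspace\right|_{\subentity}$; and $P_{\subentity}$ is a linear map onto $\ctrace{\polyspace}{\subentity}{\dualbasis}$ (it is surjective because any $g\in\ctrace{\polyspace}{\subentity}{\dualbasis}$ has decomposition $g=0+g$, so $P_{\subentity}(g)=g$). First I would observe that a linear map sends a spanning set of its domain to a spanning set of its image, so applying $P_{\subentity}$ to the spanning set $\{\left.\phi_i\right|_{\subentity}\}_{i=0}^{n-1}$ yields
\[
\ctrace{\polyspace}{\subentity}{\dualbasis}
= P_{\subentity}\!\left(\left.\polyspace\right|_{\subentity}\right)
= \operatorname{span}\left\{P_{\subentity}\!\left(\left.\phi_i\right|_{\subentity}\right)\,:\,i=0,\dots,n-1\right\}.
\]
This already produces the target space, but with the full index set rather than only those $i$ with $l_i\in\left.\dualbasis\right|_{\closure{\subentity}}$; the remaining work is to discard the superfluous generators.

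Next I would show that every generator indexed by $l_i\not\in\left.\dualbasis\right|_{\closure{\subentity}}$ vanishes. By \cref{lemma:first-unc}, for such an index the restriction satisfies $\left.\phi_i\right|_{\subentity}\in\unctrace{\polyspace}{\subentity}{\dualbasis}$. Writing $\left.\phi_i\right|_{\subentity}=f^\perp+f^\parallel$ as in \cref{lemma:zeroth}, the uniqueness of that decomposition forces $f^\parallel=0$, whence $P_{\subentity}\!\left(\left.\phi_i\right|_{\subentity}\right)=f^\parallel=0$. Put differently, $\unctrace{\polyspace}{\subentity}{\dualbasis}$ is exactly the kernel of the orthogonal projection $P_{\subentity}$, which is immediate from the direct-sum decomposition \cref{oplus}.

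Finally I would combine the two observations: since the generators with $l_i\not\in\left.\dualbasis\right|_{\closure{\subentity}}$ are all zero, deleting them from the spanning set leaves the span unchanged, and therefore
\[
\ctrace{\polyspace}{\subentity}{\dualbasis}
= \operatorname{span}\left\{P_{\subentity}\!\left(\left.\phi_i\right|_{\subentity}\right)\,:\,l_i\in\left.\dualbasis\right|_{\closure{\subentity}}\right\},
\]
as required. The argument is essentially bookkeeping, and I do not anticipate a genuine obstacle: the only point that needs care is the identification $\ker P_{\subentity}=\unctrace{\polyspace}{\subentity}{\dualbasis}$, where \cref{lemma:first-unc} together with the uniqueness in \cref{lemma:zeroth} does the real work. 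No dimension count or independence argument is needed beyond the principle that a linear map carries spanning sets to spanning sets.
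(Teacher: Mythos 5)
Your proposal is correct, and it reaches the conclusion by a genuinely different mechanism than the paper. The paper works at the level of subspace identities: it starts from the direct sum $\left.\polyspace\right|_{\subentity}=\unctrace{\polyspace}{\subentity}{\dualbasis}\oplus\ctrace{\polyspace}{\subentity}{\dualbasis}$ of \cref{oplus}, splits the spanning set $\{\left.\phi_i\right|_{\subentity}\}$ into the two groups of indices, replaces the first group by $\unctrace{\polyspace}{\subentity}{\dualbasis}$ via \cref{lemma:first-unc}, rewrites each remaining generator as $g_i+P_{\subentity}(\left.\phi_i\right|_{\subentity})$ via \cref{lemma:zeroth}, and then cancels the common summand $\unctrace{\polyspace}{\subentity}{\dualbasis}$ from an equality of two direct sums. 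That last cancellation step is the delicate point of the paper's route: in general $A\oplus B=A\oplus C$ does not force $B=C$, and one needs the additional (implicit) observation that the span of the projections is contained in $\ctrace{\polyspace}{\subentity}{\dualbasis}$, together with a dimension count or a further projection, to close it. Your argument sidesteps this entirely by working at the level of the operator $P_{\subentity}$: you use its surjectivity onto $\ctrace{\polyspace}{\subentity}{\dualbasis}$ (a linear map carries a spanning set of its domain onto a spanning set of its image) and then identify $\ker P_{\subentity}=\unctrace{\polyspace}{\subentity}{\dualbasis}$, so the generators indexed by $l_i\not\in\left.\dualbasis\right|_{\closure{\subentity}}$ are literally zero and can be discarded. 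Both proofs rest on the same two ingredients (\cref{lemma:zeroth} and \cref{lemma:first-unc}), but yours is the cleaner and more robust bookkeeping: it never invokes a cancellation of direct summands, while the paper's version stays closer to the span identities that are reused in the subsequent lemmas.
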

\begin{proof}
Using \cref{oplus}, we see that
\begin{align*}
\unctrace{\polyspace}{\subentity}{\dualbasis}\oplus\ctrace{\polyspace}{\subentity}{\dualbasis}
&=\left.\polyspace\right|_{\subentity}\\
&=
\operatorname{span}\left\{\left.\phi_0\right|_{\subentity},\dots,\left.\phi_{n-1}\right|_{\subentity}\right\}\\
&=
\operatorname{span}\left(\left\{\left.\phi_i\right|_{\subentity}\,:\,l_i\not\in\left.\dualbasis\right|_{\closure{\subentity}}\right\}
\cup
\left\{\left.\phi_i\right|_{\subentity}\,:\,l_i\in\left.\dualbasis\right|_{\closure{\subentity}}\right\}\right).
\end{align*}
Using \cref{lemma:first-unc}, we therefore see that
\begin{align*}
\unctrace{\polyspace}{\subentity}{\dualbasis}\oplus\ctrace{\polyspace}{\subentity}{\dualbasis}
&=
\unctrace{\polyspace}{\subentity}{\dualbasis}
+
\operatorname{span}\left\{\left.\phi_i\right|_{\subentity}\,:\,l_i\in\left.\dualbasis\right|_{\closure{\subentity}}\right\}.
\end{align*}
By \cref{lemma:zeroth}, we know that for all $l_i\in\left.\dualbasis\right|_{\closure{\subentity}}$,
there exists $g_i\in\unctrace{\polyspace}{\subentity}{\dualbasis}$ such that
\(\left.\phi_i\right|_{\subentity}=g_i+P_{\subentity}\left(\left.\phi_i\right|_{\subentity}
\right)\). It follows that
\begin{equation}
\unctrace{\polyspace}{\subentity}{\dualbasis}\oplus\ctrace{\polyspace}{\subentity}{\dualbasis}
=
\unctrace{\polyspace}{\subentity}{\dualbasis}
+
\operatorname{span}\left\{P_{\subentity}\left(\left.\phi_i\right|_{\subentity}\right)\,:\,l_i\in\left.\dualbasis\right|_{\closure{\subentity}}\right\}.
\label{eq:first-sum}\end{equation}
As (for all $l_i\in\left.\dualbasis\right|_{\closure{\subentity}}$) $P_{\subentity}\left(\left.\phi_i\right|_{\subentity}\right)\in\ctrace{\polyspace}{\subentity}{\dualbasis}$, we see due to \cref{lemma:zeroth} that 
\[\unctrace{\polyspace}{\subentity}{\dualbasis}
\cap
\operatorname{span}\left\{P_{\subentity}\left(\left.\phi_i\right|_{\subentity}\right)\,:\,l_i\in\left.\dualbasis\right|_{\closure{\subentity}}\right\}
=\emptyset,\] and so the sum in \cref{eq:first-sum} is disjoint, ie
\begin{align*}
\unctrace{\polyspace}{\subentity}{\dualbasis}\oplus\ctrace{\polyspace}{\subentity}{\dualbasis}
&=
\unctrace{\polyspace}{\subentity}{\dualbasis}
\oplus
\operatorname{span}\left\{P_{\subentity}\left(\left.\phi_i\right|_{\subentity}\right)\,:\,l_i\in\left.\dualbasis\right|_{\closure{\subentity}}\right\}.
\end{align*}
From this, we conclude that
\begin{align*}
\ctrace{\polyspace}{\subentity}{\dualbasis}
&=
\operatorname{span}\left\{P_{\subentity}\left(\left.\phi_i\right|_{\subentity}\right)\,:\,l_i\in\left.\dualbasis\right|_{\closure{\subentity}}\right\}.
\end{align*}
\end{proof}

\begin{lemma}\label{sublemma}
For all $l_i,l_j\in\left.\dualbasis\right|_{\closure{\subentity}}$,
\[l_i\left(P_{\subentity}\left(\left.\phi_j\right|_{\subentity}\right)\right)=\delta_{ij}.\]
\end{lemma}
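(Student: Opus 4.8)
The plan is to combine the orthogonal decomposition supplied by \cref{lemma:zeroth} with the locality property of the functionals built into the Ciarlet definition, after which the statement reduces to a one-line linearity computation.

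First I would make sense of the expression $l_i\left(P_{\subentity}\left(\left.\phi_j\right|_{\subentity}\right)\right)$, since $l_i$ is \emph{a priori} a functional on $\polyspace$ (functions on all of $\refel$) whereas $P_{\subentity}\left(\left.\phi_j\right|_{\subentity}\right)$ lives only on $\subentity$. Each $l_i\in\left.\dualbasis\right|_{\closure{\subentity}}$ is associated with a sub-entity $\subentity'\subseteq\closure{\subentity}$, so by \cref{def:ciarlet} its value on any $p\in\polyspace$ depends only on $\left.p\right|_{\subentity'}$; for the polynomials involved, the restriction to $\subentity$ determines the restriction to $\closure{\subentity}$ by continuity, and hence determines $\left.p\right|_{\subentity'}$. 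Thus $l_i$ descends to a well-defined linear functional on $\left.\polyspace\right|_{\subentity}$, and it is legitimate to evaluate it on $P_{\subentity}\left(\left.\phi_j\right|_{\subentity}\right)$ and on elements of $\unctrace{\polyspace}{\subentity}{\dualbasis}$.

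Next, for any $l_j\in\left.\dualbasis\right|_{\closure{\subentity}}$ I would invoke \cref{lemma:zeroth} to write
$$\left.\phi_j\right|_{\subentity}=\left(\left.\phi_j\right|_{\subentity}\right)^\perp+P_{\subentity}\left(\left.\phi_j\right|_{\subentity}\right),\qquad\left(\left.\phi_j\right|_{\subentity}\right)^\perp\in\unctrace{\polyspace}{\subentity}{\dualbasis},$$
and then apply $l_i$ and use linearity to obtain
$$l_i\left(\left.\phi_j\right|_{\subentity}\right)=l_i\left(\left(\left.\phi_j\right|_{\subentity}\right)^\perp\right)+l_i\left(P_{\subentity}\left(\left.\phi_j\right|_{\subentity}\right)\right).$$
I would then evaluate the two outer terms directly. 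By \cref{def:traces}, every element of $\unctrace{\polyspace}{\subentity}{\dualbasis}$ is annihilated by all functionals in $\left.\dualbasis\right|_{\closure{\subentity}}$, so $l_i\bigl((\left.\phi_j\right|_{\subentity})^\perp\bigr)=0$; and since $l_i$ depends only on the restriction to $\subentity$, the duality relation of \cref{def:ciarlet} gives $l_i(\left.\phi_j\right|_{\subentity})=l_i(\phi_j)=\delta_{ij}$. Substituting these leaves $\delta_{ij}=l_i\left(P_{\subentity}\left(\left.\phi_j\right|_{\subentity}\right)\right)$, which is the claim.

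The only genuine obstacle is the consistency check in the second paragraph: one must confirm that evaluating $l_i$ on the restricted, projected function agrees with the value obtained by first lifting back to $\polyspace$, so that the three evaluations appearing in the displayed identity all refer to the same functional. This is exactly what the locality assumption of the Ciarlet definition guarantees, and once it is secured the remainder is routine.
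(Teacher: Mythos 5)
Your proof is correct and follows essentially the same route as the paper's: decompose $\left.\phi_j\right|_{\subentity}$ via \cref{lemma:zeroth}, annihilate the uncontrolled component using \cref{def:traces}, and conclude from the duality relation in \cref{def:ciarlet}. Your additional paragraph justifying that $l_i$ descends to a well-defined functional on $\left.\polyspace\right|_{\subentity}$ makes explicit a locality argument the paper leaves implicit, which is a welcome clarification rather than a deviation.
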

\begin{proof}
By \cref{lemma:zeroth}, we know that for all $l_i\in\left.\dualbasis\right|_{\closure{\subentity}}$,
there exists $g_i\in\unctrace{\polyspace}{\subentity}{\dualbasis}$ such that
\(\left.\phi_i\right|_{\subentity}=g_i+P_{\subentity}\left(\left.\phi_i\right|_{\subentity}
\right)\). By \cref{def:traces}, $l_j(g_i)=0$ and so
\[
l_j\left(P_{\subentity}\left(\left.\phi_i\right|_{\subentity}\right)\right)
=
l_j\left(\left.\phi_i\right|_{\subentity}-g_i\right)
=
l_j\left(\left.\phi_i\right|_{\subentity}\right)-l_j\left(g_i\right)
=
l_j\left(\left.\phi_i\right|_{\subentity}\right).
\]
The result then follows from \cref{def:ciarlet}.
\end{proof}

\begin{lemma}\label{lemma:first-basis}
Let $(\refel,\polyspace,\dualbasis)$ be a reference-mapped finite element.
If $\subentity$ be a sub-entity of $\refel$, then
\[\left\{P_{\subentity}\left(\left.\phi_i\right|_{\subentity}\right)
\,:\,l_i\in\left.\dualbasis\right|_{\closure{\subentity}}\right\}\]
is a basis of $\ctrace{\polyspace}{\subentity}{\dualbasis}$, where $P_{\subentity}$ is defined as in \cref{lemma:zeroth}.
\end{lemma}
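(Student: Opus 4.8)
The plan is to verify the two defining properties of a basis: that the set spans $\ctrace{\polyspace}{\subentity}{\dualbasis}$ and that it is linearly independent. The spanning property has already been established in \cref{lemma:first-c}, so the only remaining work is linear independence, which will follow almost immediately from the biorthogonality relation in \cref{sublemma}.

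First I would suppose that some linear combination of the proposed basis functions vanishes,
\[
\sum_{i\,:\,l_i\in\left.\dualbasis\right|_{\closure{\subentity}}}c_i\,P_{\subentity}\left(\left.\phi_i\right|_{\subentity}\right)=0,
\]
with scalars $c_i\in\mathbb{R}$. The key step is then to apply an arbitrary functional $l_j\in\left.\dualbasis\right|_{\closure{\subentity}}$ to both sides. Using the linearity of $l_j$ together with the identity $l_j(P_{\subentity}(\left.\phi_i\right|_{\subentity}))=\delta_{ij}$ from \cref{sublemma}, the left-hand side collapses to $c_j$, so that $c_j=0$ for every index $j$ with $l_j\in\left.\dualbasis\right|_{\closure{\subentity}}$. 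Hence all the coefficients vanish and the set is linearly independent.

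Combining this with the spanning statement of \cref{lemma:first-c}, the set is both spanning and linearly independent, and is therefore a basis of $\ctrace{\polyspace}{\subentity}{\dualbasis}$. I do not expect a genuine obstacle here: the two preceding lemmas do all the heavy lifting, and the only thing to observe is that the biorthogonality of \cref{sublemma} yields linear independence directly. The one point worth stating carefully is that the functionals $l_j$ and the functions $P_{\subentity}(\left.\phi_i\right|_{\subentity})$ are indexed by the same set $\{i\,:\,l_i\in\left.\dualbasis\right|_{\closure{\subentity}}\}$, so that the matrix with entries $l_j(P_{\subentity}(\left.\phi_i\right|_{\subentity}))$ is square and equal to the identity; this makes the biorthogonality argument valid and confirms that the number of candidate basis functions matches the dimension of $\ctrace{\polyspace}{\subentity}{\dualbasis}$.
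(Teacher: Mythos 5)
Your proposal is correct and follows essentially the same route as the paper's own proof: spanning is taken from \cref{lemma:first-c}, and linear independence is obtained by applying an arbitrary $l_j\in\left.\dualbasis\right|_{\closure{\subentity}}$ to a vanishing linear combination and invoking the biorthogonality relation of \cref{sublemma}. Your closing remark about the common indexing set is a small but welcome clarification that the paper leaves implicit.
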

\begin{proof}
By \cref{lemma:first-c}, we know that
\[
\ctrace{\polyspace}{\subentity}{\dualbasis} =
\operatorname{span}\left\{P_{\subentity}\left(\left.\phi_i\right|_{\subentity}\right)
\,:\,l_i\in\left.\dualbasis\right|_{\closure{\subentity}}\right\},
\]
and so there exists a basis of $\ctrace{\polyspace}{\subentity}{\dualbasis}$ that is a subset of
$\left\{P_{\subentity}\left(\left.\phi_i\right|_{\subentity}\right)
\,:\,l_i\in\left.\dualbasis\right|_{\closure{\subentity}}\right\}$.
Suppose that
\[\sum_{\substack{i\text{ s.t.}\\l_i\in\left.\dualbasis\right|_{\closure{\subentity}}}}\alpha_iP_{\subentity}\left(\left.\phi_i\right|_{\subentity}\right) = 0,\]
and let
$l_j\in\left.\dualbasis\right|_{\closure{\subentity}}$. We can see, using \cref{sublemma}, that
\[
0 = l_j(0)
=
l_j\left(\sum_{\substack{i\text{ s.t.}\\l_i\in\left.\dualbasis\right|_{\closure{\subentity}}}}\alpha_iP_{\subentity}\left(\left.\phi_i\right|_{\subentity}\right)\right)
=
\sum_{\substack{i\text{ s.t.}\\l_i\in\left.\dualbasis\right|_{\closure{\subentity}}}}\alpha_il_j\left(P_{\subentity}\left(\left.\phi_i\right|_{\subentity}\right)\right)
=
\alpha_j,
\]
and so the set 
$\left\{P_{\subentity}\left(\left.\phi_i\right|_{\subentity}\right)
\,:\,l_i\in\left.\dualbasis\right|_{\closure{\subentity}}\right\}$
is linearly independent.
\end{proof}

\begin{lemma}\label{lemma:second}
Let $(\refel,\polyspace,\dualbasis)$ and $(\refel,\polyspace,\tilde{\dualbasis})$ be reference-mapped finite elements
where for all sub-entities $\subentity\subseteq\refel$, there are the same number of functionals in $\dualbasis$ and $\tilde{\dualbasis}$ that are associated with $\subentity$.
If
for all sub-entities $\subentity\subseteq\refel$,
$\unctrace{\polyspace}{\subentity}{\dualbasis} = \unctrace{\polyspace}{\subentity}{\tilde{\dualbasis}}$;
then 
for all sub-entities $\subentity\subseteq\refel$,
$\ctrace{\polyspace}{\subentity}{\dualbasis} = \ctrace{\polyspace}{\subentity}{\tilde{\dualbasis}}$,
\end{lemma}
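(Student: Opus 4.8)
The plan is to exploit the fact that, by \cref{def:traces}, the controlled trace $\ctrace{\polyspace}{\subentity}{\dualbasis}$ is \emph{by definition} the orthogonal complement of the uncontrolled trace $\unctrace{\polyspace}{\subentity}{\dualbasis}$ inside the restricted space $\left.\polyspace\right|_{\subentity}$, taken with respect to the inner product $\langle\cdot,\cdot\rangle_{\subentity}$. An orthogonal complement is determined entirely by three pieces of data: the ambient space, its inner product, and the subspace being complemented. I would therefore show that all three coincide for the two elements, and then conclude immediately.

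First I would fix an arbitrary sub-entity $\subentity\subseteq\refel$ and check that the ambient space $\left.\polyspace\right|_{\subentity}$ is the same for both elements. This holds because $\left.\polyspace\right|_{\subentity}=\{\left.p\right|_{\subentity}:p\in\polyspace\}$ depends only on $\polyspace$ and $\subentity$; equivalently, since $\{\phi_0,\dots,\phi_{n-1}\}$ and $\{\tilde\phi_0,\dots,\tilde\phi_{n-1}\}$ are both bases of the common space $\polyspace$, their restrictions to $\subentity$ span the same space, so the definition of $\left.\polyspace\right|_{\subentity}$ used in \cref{lemma:zeroth} agrees for the two elements. Next I would observe that the inner product $\langle p,q\rangle_{\subentity}=\int_{\subentity}p\cdot q$ is intrinsic to $\subentity$ and makes no reference to either dual basis, so it is identical in the two cases. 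Finally, the hypothesis gives $\unctrace{\polyspace}{\subentity}{\dualbasis}=\unctrace{\polyspace}{\subentity}{\tilde{\dualbasis}}$, so the subspace being complemented also coincides.

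Putting these together, the two controlled traces are orthogonal complements of the same subspace inside the same inner product space, hence equal, giving $\ctrace{\polyspace}{\subentity}{\dualbasis}=\ctrace{\polyspace}{\subentity}{\tilde{\dualbasis}}$; as $\subentity$ was arbitrary this holds for every sub-entity. I do not expect a genuine obstacle here: the result is essentially immediate from the definition together with the decomposition \cref{oplus}, and the only step needing a sentence of care is the basis-independence of $\left.\polyspace\right|_{\subentity}$. I note in passing that the assumption that $\dualbasis$ and $\tilde{\dualbasis}$ have the same number of functionals associated with each sub-entity is not actually required for this argument; it would enter only if one preferred a constructive proof that builds a basis of $\ctrace{\polyspace}{\subentity}{\dualbasis}$ from the functionals as in \cref{lemma:first-basis}, where matching dimensions would then be verified by counting.
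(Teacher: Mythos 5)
Your proof is correct and takes essentially the same route as the paper: the paper's own proof is a one-line appeal to the orthogonal-complement theorem \cite[theorem 3.3-4]{kreyszig}, which is exactly the argument you spell out in detail (same ambient space $\left.\polyspace\right|_{\subentity}$, same inner product, same subspace, hence same complement). Your observation that the hypothesis on matching numbers of functionals per sub-entity is not needed here is also consistent with the paper, whose proof makes no use of it either.
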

\begin{proof}
This is follows from \cite[theorem 3.3-4]{kreyszig}.
\end{proof}

\begin{lemma}\label{lemma:fourth}
Let $(\refel,\polyspace,\dualbasis)$ and $(\refel,\polyspace,\tilde{\dualbasis})$ be reference-mapped finite elements
where for all sub-entities $\subentity\subseteq\refel$, there are the same number of functionals in $\dualbasis$ and $\tilde{\dualbasis}$ that are associated with $\subentity$.
Let $\subentity\subseteq\refel$ be a sub-entity of $\refel$.
If
$\ctrace{\polyspace}{\subentity}{\dualbasis} = \ctrace{\polyspace}{\subentity}{\tilde{\dualbasis}}$,
then 
$\left.\dualbasis\right|_{\closure{\subentity}}$ and $\tilde{\dualbasis}|_{\closure{\subentity}}$
are equivalent sets of functionals.
\end{lemma}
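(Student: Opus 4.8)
The plan is to verify the two conditions of \cref{def:equivalent} directly, using the natural basis of the controlled trace space supplied by \cref{lemma:first-basis}. For the size condition, note that $\closure{\subentity}$ is the union of $\subentity$ with all of its own sub-entities; since the hypothesis guarantees that $\dualbasis$ and $\tilde{\dualbasis}$ assign the same number of functionals to every sub-entity, summing these counts over the sub-entities contained in $\closure{\subentity}$ shows that $\left.\dualbasis\right|_{\closure{\subentity}}$ and $\tilde{\dualbasis}|_{\closure{\subentity}}$ have the same cardinality.

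For the second condition I would first identify the basis functions associated with each set of functionals. Write $\phi_i$ and $\tilde{\phi}_i$ for the basis functions of the two elements, and let $P_{\subentity}$ and $\tilde{P}_{\subentity}$ be the orthogonal projections of \cref{lemma:zeroth} onto $\ctrace{\polyspace}{\subentity}{\dualbasis}$ and $\ctrace{\polyspace}{\subentity}{\tilde{\dualbasis}}$ respectively. By \cref{lemma:first-basis} the families
$$\psi_i := P_{\subentity}\!\left(\left.\phi_i\right|_{\subentity}\right),\ l_i\in\left.\dualbasis\right|_{\closure{\subentity}}, \qquad \tilde{\psi}_i := \tilde{P}_{\subentity}\!\left(\left.\tilde{\phi}_i\right|_{\subentity}\right),\ \tilde{l}_i\in\tilde{\dualbasis}|_{\closure{\subentity}},$$
are bases of the two controlled trace spaces, and by \cref{sublemma} these functionals act as their dual bases, so that $l_i(\psi_j)=\delta_{ij}$ and $\tilde{l}_i(\tilde{\psi}_j)=\delta_{ij}$. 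In particular the $\psi_i$ are exactly the finite element basis functions corresponding to $\left.\dualbasis\right|_{\closure{\subentity}}$ in the sense of \cref{def:equivalent}.

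The key step is then to exploit the hypothesis $\ctrace{\polyspace}{\subentity}{\dualbasis}=\ctrace{\polyspace}{\subentity}{\tilde{\dualbasis}}$: both $\{\psi_i\}$ and $\{\tilde{\psi}_i\}$ are bases of one and the same space, so there is a unique invertible linear map $\mathcal{F}$ on that space with $\mathcal{F}\psi_j=\tilde{\psi}_j$ for each $j$. Taking this change of basis as the push forward map, I would compute $\tilde{l}_i(\mathcal{F}\psi_j)=\tilde{l}_i(\tilde{\psi}_j)=\delta_{ij}$, which is precisely the second bullet of \cref{def:equivalent}; together with the size condition this shows that $\left.\dualbasis\right|_{\closure{\subentity}}$ and $\tilde{\dualbasis}|_{\closure{\subentity}}$ are equivalent.

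The main obstacle is conceptual rather than computational: one must recognise that the correct basis functions corresponding to $\left.\dualbasis\right|_{\closure{\subentity}}$ are the projected restrictions $\psi_i$ and not the global $\phi_i$, and that \cref{sublemma} supplies the dual-basis relation for \emph{both} elements. Once the two bases are seen to lie in a common space, the push forward exists by elementary linear algebra and no estimate is needed. A minor point to confirm is that a linear change of basis counts as a push forward map in the sense of \cref{def:equivalent}; this is consistent with the proof of \cref{lemma:equivalent}, where the push forward is likewise built as an invertible linear combination of basis functions.
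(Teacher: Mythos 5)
Your proof is correct and follows essentially the same route as the paper's: both arguments use \cref{lemma:first-basis} and \cref{sublemma} to produce bases of the two controlled trace spaces that are dual to the respective sets of functionals, and then exploit the hypothesis that these two spaces coincide. The only difference is in the finish: the paper expresses each $l_j$ as a linear combination of the $\tilde{l}_i$ (establishing span equality of the functionals and then concluding equivalence as in \cref{lemma:equivalent}), whereas you take the change-of-basis map between the two dual bases as the push forward and verify \cref{def:equivalent} directly---a marginally more direct ending, and the interpretive point you flag (that the push forward acts on the projected restrictions rather than on the $\phi_j$ themselves) is a looseness shared by the paper's own argument, so it is not a gap relative to the paper's standard.
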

\begin{proof}
Let $\subentity\subseteq\refel$ by a sub-entity of $\refel$, and suppose that
$\ctrace{\polyspace}{\subentity}{\dualbasis} = \ctrace{\polyspace}{\subentity}{\tilde{\dualbasis}}$.
Let $\mathcal{I}_{\subentity}$ and $\widetilde{\mathcal{I}}_{\subentity}$ be sets of indices such that
$\{l_i:i\in\mathcal{I}_{\subentity}\}=\dualbasis|_{\closure{\subentity}}$ and
$\{\tilde{l}_i:i\in\widetilde{\mathcal{I}}_{\subentity}\}=\tilde{\dualbasis}|_{\closure{\subentity}}$.

By \cref{lemma:first-basis}, we know there exist
$\{p_i:i\in\mathcal{I}_{\subentity}\}\subset\ctrace{\polyspace}{\subentity}{\dualbasis}$
and
$\{q_i:i\in\widetilde{\mathcal{I}}_{\subentity}\}\subset\ctrace{\polyspace}{\subentity}{\tilde{\dualbasis}}$
such that $l_i(p_j)=\delta_{ij}$ and
(for $i,j\in\widetilde{\mathcal{I}}_{\subentity}$)
$\tilde{l}_i(q_j)=\delta_{ij}$.
As $\ctrace{\polyspace}{\subentity}{\dualbasis} = \ctrace{\polyspace}{\subentity}{\tilde{\dualbasis}}$, we see from 
\cref{lemma:first-basis}, 
that 
$\{p_i:i\in\mathcal{I}_{\subentity}\}$ and $\{q_i:i\in\widetilde{\mathcal{I}}_{\subentity}\}$ are bases of
the space $\ctrace{\polyspace}{\subentity}{\dualbasis}$. Hence
there exist $\alpha_{ij}\in\mathbb{R}$ such that
(for $i\in\mathcal{I}_{\subentity}$)
\[
q_i = \sum_{j\in\widetilde{\mathcal{I}}_{\subentity}} \alpha_{ij}p_j.
\]
Using this, we see that (for $j\in\mathcal{I}_{\subentity}$)
\begin{equation}\label{eq:l_j(q_i)}
l_j\left(q_i\right)
=
l_j\left(\sum_{k=a}^b \alpha_{ik}p_k\right)
=
\sum_{k=a}^b \alpha_{ik}l_j\left(p_k\right)
=
\alpha_{ij}.
\end{equation}

Let $f\in\ctrace{\polyspace}{\subentity}{\dualbasis}$. As $\{q_i:i\in\widetilde{\mathcal{I}}_{\subentity}\}$ is a basis, there
exist $c_j\in\mathbb{R}$ such that $f=\sum_{j\in\widetilde{\mathcal{I}}_{\subentity}}c_jq_j$. We see that
(for $i\in\widetilde{\mathcal{I}}_{\subentity}$)
\begin{equation}\label{eq:l_i(f)}
\tilde{l}_i(f)
=
\tilde{l}_i\left(\sum_{j\in\widetilde{\mathcal{I}}_{\subentity}}c_jq_j\right)
=
\sum_{j\in\widetilde{\mathcal{I}}_{\subentity}}c_j\tilde{l}_i\left(q_j\right)
=
c_i.
\end{equation}
Using \cref{eq:l_j(q_i),eq:l_i(f)}, we see that
\begin{align*}
l_j(f)
= \sum_{i=a}^bc_il_j\left(q_i\right)
= \sum_{i=a}^bc_i\alpha_{ij}
= \sum_{i=a}^b\alpha_{ij}l_i(f),
\end{align*}
and so
\[
l_j = \sum_{i=a}^b\alpha_{ij}\tilde{l}_i.
\]
It follows that $\dualbasis|_{\closure{\subentity}}$ and $\tilde{\dualbasis}|_{\closure{\subentity}}$
are equivalent.
\end{proof}

\begin{lemma}\label{lemma:fifth}
Let $(\refel,\polyspace,\dualbasis)$ and $(\refel,\polyspace,\tilde{\dualbasis})$ be reference-mapped finite elements
where for all sub-entities $\subentity\subseteq\refel$, there are the same number of functionals in $\dualbasis$ and $\tilde{\dualbasis}$ that are associated with $\subentity$.
If
for all sub-entities $\subentity\subseteq\refel$,
$\left.\dualbasis\right|_{\closure{\subentity}}$ and $\tilde{\dualbasis}|_{\closure{\subentity}}$
are equivalent sets of functionals;
then for all sub-entities $\subentity\subseteq\refel$,
$\left.\dualbasis\right|_{\subentity}$ and $\tilde{\dualbasis}|_{\subentity}$
are equivalent sets of functionals.
\end{lemma}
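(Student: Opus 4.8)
The plan is to argue by induction on the dimension of the sub-entity $\subentity$, using the fact that, at the level of associated functionals, the closure decomposes as a disjoint union $\left.\dualbasis\right|_{\closure{\subentity}} = \left.\dualbasis\right|_{\subentity} \sqcup \left.\dualbasis\right|_{\partial\subentity}$, where $\left.\dualbasis\right|_{\partial\subentity}$ collects the functionals associated with the proper faces of $\subentity$ (all of which have strictly smaller dimension). The hypothesis supplies equivalence of the two closure sets, and the induction will supply equivalence of the two face sets; the goal is to cancel the common boundary contribution and be left with equivalence of the bare sets $\left.\dualbasis\right|_{\subentity}$ and $\tilde{\dualbasis}|_{\subentity}$.

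For the base case $\subentity$ is a vertex, so $\closure{\subentity}=\subentity$, the decomposition is trivial, and the conclusion coincides with the hypothesis. For the inductive step I would assume that $\left.\dualbasis\right|_{F}$ and $\tilde{\dualbasis}|_{F}$ are equivalent for every sub-entity $F$ with $\dim F<\dim\subentity$, in particular for every proper face of $\subentity$. Passing between ``equivalent'' and ``equal span of functionals'' through \cref{lemma:equivalent} (since the two sets sit on one fixed sub-entity of a single reference cell, the witnessing map may be taken to act trivially, so the correspondence runs both ways here), the hypothesis gives $\operatorname{span}\left.\dualbasis\right|_{\closure{\subentity}}=\operatorname{span}\tilde{\dualbasis}|_{\closure{\subentity}}$ and the induction gives $\operatorname{span}\left.\dualbasis\right|_{\partial\subentity}=\operatorname{span}\tilde{\dualbasis}|_{\partial\subentity}$.

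The main obstacle is that these two equalities cannot simply be subtracted: the bare sets are complements of the common boundary span inside the common closure span, and two complements of a fixed subspace need not agree. To break this I would invoke the locality built into \cref{def:ciarlet} --- a functional associated with a proper face $F$ depends only on the restriction to $F$ --- so that every functional in $\left.\dualbasis\right|_{\partial\subentity}$ annihilates the subspace $Z:=\{p\in\polyspace : \left.p\right|_{\partial\subentity}=0\}$ of polynomials vanishing on the boundary of $\subentity$. Restricting every functional to $Z$ kills the boundary part, so on $Z$ the closure span collapses onto the bare span, and the closure-level equality descends to $\operatorname{span}(\left.\dualbasis\right|_{\subentity})|_{Z}=\operatorname{span}(\tilde{\dualbasis}|_{\subentity})|_{Z}$. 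To promote this back to the full equality $\operatorname{span}\left.\dualbasis\right|_{\subentity}=\operatorname{span}\tilde{\dualbasis}|_{\subentity}$ I would show that restriction to $Z$ is injective on $\operatorname{span}\left.\dualbasis\right|_{\subentity}$; this is where the controlled-trace machinery of \cref{lemma:first-basis} and \cref{sublemma} enters, as the interior functions seen by $Z$ must already resolve the controlled part of the trace on $\subentity$, with the ``same number of functionals'' hypothesis forcing the counts to match. A final application of \cref{lemma:equivalent} then turns the bare span equality into the required equivalence. I expect this last lift --- the injectivity of restriction to $Z$ --- to be the delicate point, since it is exactly where the locality of the functionals and the trace decomposition have to be combined.
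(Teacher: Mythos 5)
Your proposal follows the same skeleton as the paper's proof: induction on the dimension of $\subentity$, the trivial base case at vertices, the decomposition of $\dualbasis|_{\closure{\subentity}}$ into $\dualbasis|_{\subentity}$ plus the functionals on the proper faces, and the use of \cref{lemma:equivalent} to pass between equivalence and equality of spans. You part company with the paper at the cancellation step: the paper writes $\operatorname{span}\dualbasis|_{\closure{\subentity}}$ in two ways as a direct sum with a common boundary summand and simply cancels that summand, whereas you (rightly) observe that two complements of a fixed subspace need not coincide, and you attempt a locality-based repair. The repair, however, is where your proposal genuinely breaks: the injectivity of restriction to $Z=\{p\in\polyspace\,:\,p|_{\partial\subentity}=0\}$ on $\operatorname{span}\dualbasis|_{\subentity}$, which you defer as ``the delicate point'', is false in general. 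Take the Crouzeix--Raviart element on a triangle and let $\subentity$ be an edge with endpoints $v_1,v_2$: then $Z=\{p\in\mathbb{P}_1\,:\,p(v_1)=p(v_2)=0\}$ consists of linear functions that vanish on all of $\subentity$, so by the very locality you invoke, every functional associated with $\subentity$ restricts to zero on $Z$ --- yet $\dualbasis|_{\subentity}$ is spanned by the nonzero midpoint evaluation. Restriction to $Z$ thus annihilates exactly the functionals it was meant to separate. (A smaller issue: even where injectivity holds on each span separately, equal images under restriction do not force the two subspaces to be equal; you would need injectivity on their sum.)

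There is also a deeper obstruction: the intermediate statement that both you and the paper aim for, $\operatorname{span}\dualbasis|_{\subentity}=\operatorname{span}\tilde{\dualbasis}|_{\subentity}$, does not actually follow from the hypotheses, so no refinement of this route can close the gap. On a triangle with $\polyspace=\mathbb{P}_2$, let $\dualbasis$ be the usual Lagrange functionals and let $\tilde{\dualbasis}$ agree at the vertices but replace each midpoint evaluation $p\mapsto p(m)$ by $p\mapsto p(m)+p(v_1)$, where $v_1$ is an endpoint of the corresponding edge; this is a valid Ciarlet element (the functionals remain linearly independent and each is local to its closed edge), it has the same number of functionals on each entity, and the closure sets have equal spans for every $\subentity$, so the hypotheses of the lemma hold. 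Yet for an edge, $\operatorname{span}\{p\mapsto p(m)\}\neq\operatorname{span}\{p\mapsto p(m)+p(v_1)\}$. The conclusion of the lemma is nevertheless true in this example: with $\mathcal{F}$ the identity, $\tilde{l}(\phi_m)=\phi_m(m)+\phi_m(v_1)=1$ for the edge bubble $\phi_m$, so the sets are equivalent in the sense of \cref{def:equivalent}. This shows that a correct argument must produce the equivalence directly in that asymmetric sense --- for instance by exhibiting a push-forward carrying one dual basis to the other --- rather than via equality of spans; the same observation applies to the cancellation step in the paper's own proof, so your instinct about that step was sound even though your fix does not work.
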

\begin{proof}
We can show this by induction on the dimension of the sub-entity.
If the sub-entity is of dimension 0 (i.e.~a vertex), then
$\closure{\subentity}=\subentity$ and the result trivially holds.

If the result holds for sub-entities of dimension $<k$, then let $\subentity$ be an entity
of dimension $k$.
We can write $\closure{\subentity}=\subentity \cup \subentity_0 \cup\dots\cup\subentity_{m-1}$,
where $\subentity_0,\dots,\subentity_{m-1}$ are sub-entities of dimension $<k$.
We can see that
\begin{align*}
\operatorname{span}\dualbasis|_{\closure{\subentity}}
&=
\operatorname{span}\dualbasis|_{\subentity}\oplus
\operatorname{span}\dualbasis|_{\subentity_0}\oplus\dots\oplus
\operatorname{span}\dualbasis|_{\subentity_{m-1}}
\\
&=
\operatorname{span}\dualbasis|_{\subentity}\oplus
\operatorname{span}\tilde{\dualbasis}|_{\subentity_0}\oplus\dots\oplus
\operatorname{span}\tilde{\dualbasis}|_{\subentity_{m-1}},
\end{align*}
and also
\begin{align*}
\operatorname{span}\dualbasis|_{\closure{\subentity}}
&=
\operatorname{span}\tilde{\dualbasis}|_{\closure{\subentity}}
\\&=
\operatorname{span}\tilde{\dualbasis}|_{\subentity}\oplus
\operatorname{span}\tilde{\dualbasis}|_{\subentity_0}\oplus\dots\oplus
\operatorname{span}\tilde{\dualbasis}|_{\subentity_{m-1}}.
\end{align*}
Therefore $\operatorname{span}\tilde{\dualbasis}|_{\subentity}=\operatorname{span}\dualbasis|_{\subentity}$.
By \cref{lemma:equivalent}, it follows that $\dualbasis|_{\subentity}$ and $\tilde{\dualbasis}|_{\subentity}$
are equivalent sets of functionals.
\end{proof}

We can now show the main result of this section.

\begin{theorem}\label{lemma:verification}
Let $(\refel,\polyspace,\dualbasis)$ and $(\refel,\polyspace,\tilde{\dualbasis})$ be finite elements.
If
for all sub-entities $\subentity\subseteq\refel$,
$\unctrace{\polyspace}{\subentity}{\dualbasis} = \unctrace{\polyspace}{\subentity}{\tilde{\dualbasis}}$;
then $(\tilde{\refel},\tilde{\polyspace},\tilde{\dualbasis})$ is a variant of $(\refel,\polyspace,\dualbasis)$.
\end{theorem}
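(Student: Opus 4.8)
The plan is to verify the three conditions in \cref{def:variant}. The first two, $\tilde{\refel} = \refel$ and $\tilde{\polyspace} = \polyspace$, hold immediately, since both elements are given to share the same reference cell and the same polynomial space. Everything therefore reduces to showing that, for every sub-entity $\subentity \subseteq \refel$, the sets $\dualbasis|_{\subentity}$ and $\tilde{\dualbasis}|_{\subentity}$ are equivalent sets of functionals. My strategy is to feed the hypothesis through the chain of preliminary results \cref{lemma:second,lemma:fourth,lemma:fifth}; each of these, however, also requires that $\dualbasis$ and $\tilde{\dualbasis}$ associate the same number of functionals with each sub-entity, so the first genuine task is to deduce this counting statement from the equality of the uncontrolled trace spaces alone.

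To obtain the counting statement, I would fix a sub-entity $\subentity$ and note that the restricted space $\left.\polyspace\right|_{\subentity}$ depends only on $\polyspace$ and $\subentity$, not on the functionals, and so is common to both elements. The orthogonal decomposition $\left.\polyspace\right|_{\subentity} = \unctrace{\polyspace}{\subentity}{\dualbasis} \oplus \ctrace{\polyspace}{\subentity}{\dualbasis}$ of \cref{oplus} then forces $\ctrace{\polyspace}{\subentity}{\dualbasis}$ and $\ctrace{\polyspace}{\subentity}{\tilde{\dualbasis}}$ to have the same dimension, since the uncontrolled traces agree by hypothesis. By \cref{lemma:first-basis} the dimension of the controlled trace equals the number of functionals associated with $\closure{\subentity}$, so $\dualbasis$ and $\tilde{\dualbasis}$ associate equally many functionals with $\closure{\subentity}$ for every $\subentity$. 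Because the functionals associated with $\closure{\subentity}$ split as a disjoint union over $\subentity$ itself and its lower-dimensional sub-entities, an induction on $\dim\subentity$ (with vertices, where $\closure{\subentity} = \subentity$, as the base case) upgrades this to equality of the counts on each $\subentity$ individually.

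With the counting statement established, the remainder is a direct chaining of the lemmas. The hypothesis supplies $\unctrace{\polyspace}{\subentity}{\dualbasis} = \unctrace{\polyspace}{\subentity}{\tilde{\dualbasis}}$ for all $\subentity$, so \cref{lemma:second} yields $\ctrace{\polyspace}{\subentity}{\dualbasis} = \ctrace{\polyspace}{\subentity}{\tilde{\dualbasis}}$ for all $\subentity$. Applying \cref{lemma:fourth} to each sub-entity shows that $\left.\dualbasis\right|_{\closure{\subentity}}$ and $\tilde{\dualbasis}|_{\closure{\subentity}}$ are equivalent for every $\subentity$, and finally \cref{lemma:fifth} promotes equivalence on the closures to equivalence on each sub-entity itself. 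This is precisely the third condition of \cref{def:variant}, and so $(\tilde{\refel},\tilde{\polyspace},\tilde{\dualbasis})$ is a variant of $(\refel,\polyspace,\dualbasis)$.

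I expect the counting argument of the second paragraph to be the main obstacle, as it is the only step not supplied verbatim by a preceding lemma and is exactly the ingredient the three chained lemmas silently require. The delicate point is the bookkeeping in the induction that converts equality of functional counts on closures into equality on individual sub-entities, which relies on the disjointness of the association of functionals to the sub-entities contained in a given closure.
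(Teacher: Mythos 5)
Your proposal is correct and follows the same route as the paper's proof: the first two conditions of \cref{def:variant} are immediate, and the third is obtained by chaining \cref{lemma:second,lemma:fourth,lemma:fifth}. The noteworthy difference is that you explicitly address a hypothesis that the paper's two-line proof passes over in silence: all three of those lemmas assume that $\dualbasis$ and $\tilde{\dualbasis}$ associate the same number of functionals with each sub-entity, yet the theorem's hypotheses say nothing about counts, so this condition must be derived before the lemmas can be invoked. Your second paragraph supplies exactly the missing derivation: since $\left.\polyspace\right|_{\subentity}$ is common to both elements, the decomposition \cref{oplus} together with the hypothesis $\unctrace{\polyspace}{\subentity}{\dualbasis}=\unctrace{\polyspace}{\subentity}{\tilde{\dualbasis}}$ forces the two controlled traces to have equal dimension; by \cref{lemma:first-basis} (whose linear-independence argument guarantees the spanning set has exactly $\nelements{\left.\dualbasis\right|_{\closure{\subentity}}}$ distinct elements) that dimension equals the number of functionals associated with the closure; and the induction over $\dim\subentity$, using the disjointness of the association of functionals to the sub-entities making up a closure, converts equality of closure counts into equality of per-entity counts. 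This makes your write-up strictly more complete than the paper's: the counting condition is genuinely needed by the chained lemmas and genuinely derivable from the stated hypotheses, so including the argument closes a real, if small, gap in the published proof.
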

\begin{proof}
The first two bullet points in \cref{def:variant} trivially hold, so we only need to show the
third bullet point.

First, \cref{lemma:second} tells us that the assumptions of the theorem imply that
for all sub-entities $\subentity\subseteq\refel$,
$\ctrace{\polyspace}{\subentity}{\dualbasis} = \ctrace{\polyspace}{\subentity}{\tilde{\dualbasis}}$.
Applying \cref{lemma:fourth} to every sub-entity $\subentity\subseteq\refel$, we see that
$\left.\dualbasis\right|_{\closure{\subentity}}$ and $\tilde{\dualbasis}|_{\closure{\subentity}}$
are equivalent sets of functionals. We can then apply \cref{lemma:fifth} to see that
$\left.\dualbasis\right|_{\subentity}$ and $\tilde{\dualbasis}|_{\subentity}$
are equivalent sets of functionals.
\end{proof}

\Cref{lemma:verification} gives us a way to show that two elements are variants using only their basis functions.
We will use it in the following section as the basis of our algorithm to verify that elements from
different implementations are variants of each other.

%%%%%%%%%%%%%%%%%%%%%%%%%%%%%%%%%%%%%%%%%%%%%%%%%%%%%%%%%%%%%%%%%%%%%%%%%%%%%%%%%%%%%%%%%%%%%%
\section{Element verification}\label{sec:verification}
There are a large number of open source libraries available that include implementations of
finite elements. Without a significant amount of work by the user, it is difficult to be
certain that the implementations of the same element in different libraries are indeed
the implementations of the same element, and that the way the element is implemented
agrees with the element's definition in the literature. For less simple elements in particular,
it is quite possible that a bug in the implementation leads to an element that ``looks ok''
but does not solve problems to the accuracy that it should.

In order to confirm that implementations of the same element in different libraries are actually
the same element, DefElement performs verification of finite elements. This verification is
performed monthly, with the latest results displayed at \href{https://defelement.org/verification/}{defelement.org/verification/}.
It is currently performed for elements implemented in Basix \cite{basix}, FIAT \cite{FIAT}, and ndelement \cite{ndelement}, with Symfem \cite{symfem} used
as the verification baseline.
DefElement's element verification has led the authors of this paper to
find and correct a number of bugs in FIAT and Symfem.

When verifying elements, we do not require the basis functions in the two implementations to be
exactly the same, as two libraries may use different variants of the same element, as defined in 
\cref{def:variant}. These variants will have different basis functions, but their important properties
that the error of a problem solution depends on will be the same. We therefore designed an algorithm
to verify that two elements are variants of each other: this is shown in \cref{alg:verification}.

\begin{algorithm}
\caption{
Verifying that two finite elements $(\refel_0,\polyspace_0,\dualbasis_0)$ and $(\refel_1,\polyspace_1,\dualbasis_1)$ are variants of each other.
In this algorithm,
$\nelements{S}$ is the cardinality of a set $S$.
}\label{alg:verification}
\newcommand{\VARIABLE}[1]{\texttt{#1}}
\newcommand{\LET}{\textbf{let }}
\newcommand{\INPUT}{\textbf{input }}
\begin{algorithmic}[1]
\STATE \INPUT $(\refel_0,\polyspace_0,\dualbasis_0)$, $(\refel_1,\polyspace_1,\dualbasis_1)$
\IF{$\refel_0\not=\refel_1$}\label{av:pt1:start}
\STATE{update $\polyspace_1$ by applying push forward from $\refel_1$ to $\refel_0$}
\ENDIF\label{av:pt1:end}
\IF{$\polyspace_0\not=\polyspace_1$}\label{av:pt1.5:start}
\RETURN false
\ENDIF\label{av:pt1.5:end}
\FOR{$\subentity\subseteq\refel_0$}\label{av:pt2:start}
\IF{$\nelements{\left.\dualbasis_0\right|_{\subentity}}\not=\nelements{\left.\dualbasis_1\right|_{\subentity}}$}
\RETURN false
\ENDIF
\ENDFOR\label{av:pt2:end}
\FOR{$\subentity\subseteq\refel_0$}\label{av:pt3:start}
\IF{$\unctrace{\polyspace_0}{\subentity}{\dualbasis_0}\not=\unctrace{\polyspace_1}{\subentity}{\dualbasis_1}$}\label{av:unctrace}
\RETURN false
\ENDIF
\ENDFOR\label{av:pt3:end}
\RETURN true
\end{algorithmic}
\end{algorithm}

In \cref{alg:verification}, we first check whether the two implementations of the element have
the same reference cell: if not we apply the push forward map
(lines \ref{av:pt1:start}--\ref{av:pt1:end}). We then check that they
have the same polynomial space
(lines \ref{av:pt1.5:start}--\ref{av:pt1.5:end}).
Next, we check that in each implementation, the same number of DOFs are associated with each sub-entity
of the reference cell
(lines \ref{av:pt2:start}--\ref{av:pt2:end}).
Lastly, we check that the uncontrolled trace spaces for each sub-entity of the reference cell are the same
(lines \ref{av:pt3:start}--\ref{av:pt3:end}).
If these checks all pass, then by \cref{lemma:verification} the two elements are variants of each other.

When implementing this algorithm, however, one cannot simply check that the polynomial spaces in
line \ref{av:pt1.5:start} and the uncontrolled trace spaces in \ref{av:unctrace} are equal. Hence to
test if these spaces are the same, we apply \cref{alg:span} to
the bases of the polynomial spaces $\polyspace_0$ and $\polyspace_1$ (for line \ref{av:pt1.5:start})
and the basis functions not associated with $E$ restricted to $E$ (for line \ref{av:unctrace}).
For the polynomial spaces in line \ref{av:pt1.5:start}, we additionally check that the spaces have the
same dimension.

\begin{algorithm}
\caption{Checking if two sets of functions $\mathcal{A}=\{a_0,\dots,a_{n_a-1}\}$
and $\mathcal{B}=\{b_0,\dots,b_{n_b-1}\}$ span the same space on the entity $\subentity$. In this algorithm,
$s_a$ and $s_b$ are the value sizes of the functions in $\mathcal{A}$ and $\mathcal{B}$
and
$\left[f\right]_k$ is equal to $f$ if $f$ is a scalar-valued function, or the $k$th component of $f$ is $f$ is vector- or tensor-valued.
}\label{alg:span}
\newcommand{\VARIABLE}[1]{\texttt{#1}}
\newcommand{\LET}{\textbf{let }}
\newcommand{\INPUT}{\textbf{input }}
\begin{algorithmic}[1]
\STATE \INPUT $(\mathcal{A},\mathcal{B},E)$
\IF{$s_a\not=s_b$} \label{as:pt1:start}
\RETURN false
\ENDIF\label{as:pt1:end}
\STATE \LET $P=\{p_0,\dots,p_{n_P}\}$ be a set of points on $\subentity$\label{as:let-p}
\STATE \LET $M_a=(m^a_{i,j})\in\mathbb{R}^{n_P\times s_an_a}$\label{as:pt2:start}
\STATE \LET $M_b=(m^b_{i,j})\in\mathbb{R}^{n_P\times s_bn_b}$
\FOR{$0\leqslant i<n_P$}
\FOR{$0\leqslant j<s_a$}
\FOR{$0\leqslant k<n_a$}
\STATE \LET $m_{i,ks_a+j} = \left[a_k(p_i)\right]_j$
\ENDFOR
\FOR{$0\leqslant k<n_b$}
\STATE \LET $m_{i,ks_b+j} = \left[b_k(p_i)\right]_j$
\ENDFOR
\ENDFOR
\ENDFOR\label{as:pt2:end}
\IF{$\operatorname{rank}(M_a)\not=\operatorname{rank}(M_b)$}\label{as:pt3:start}
\RETURN false
\ENDIF\label{as:pt3:end}
\RETURN $\operatorname{rank}(M_a)=\operatorname{rank}\left(\begin{pmatrix}M_a\\M_b\end{pmatrix}\right)$\label{as:pt4}
\end{algorithmic}
\end{algorithm}

\Cref{alg:span} first checks that the two sets of functions have the same value shape
(lines \ref{as:pt1:start}--\ref{as:pt1:end}).
In line \ref{as:let-p}, we then define a set of points on the entity $E$. The number of points
$n_P$ must be large enough that the full behaviour of the highest degree polynomial in the input
functions is captured: for the DefElement verificiation, we use a regular lattice of points with
the number of points in each direction much larger than the degree of the highest degree space
that we verify (with points outside triangle, tetrahedron, prism and pyramid cells removed).
We then evaluate each input function at each of these points, and form two matrices that have
the values for one input function on each row
(lines \ref{as:pt2:start}--\ref{as:pt2:end}).
The row spans of these two matrices will be representative of the spans of the two sets of functions,
so we conclude the algorithm by checking that the two matrices have the same rank
(lines \ref{as:pt3:start}--\ref{as:pt3:end}),
and the rank of the two matrices stacked vertically is the same as the span of one matrix
(line \ref{as:pt4})---this final check confirms that the row span of the two matrices is the same,
as if it was not, the rank of the stacked matrix would be larger.

%%%%%%%%%%%%%%%%%%%%%%%%%%%%%%%%%%%%%%%%%%%%%%%%%%%%%%%%%%%%%%%%%%%%%%%%%%%%%%%%%%%%%%%%%%%%%%
\section{Contributing to DefElement}\label{sec:editing}
Each element on DefElement is defined by a \texttt{.def} file which contains information about the element in yaml format.
The full encyclopedia is generated from these files by scripts written in Python. The full code used to generate DefElement is available under an MIT
license in the DefElement Github repository \cite{defelement-github}, with code that is not finite element specific separated into a
separate website-build-tools repository for easier reuse by other projects \cite{website-build-tools-github}.

The adding and updating of information on DefElement is done using a variety of Github features.
Changes can be suggested, and errors and bugs can be reported using the issue tracker at \href{https://github.com/DefElement/DefElement/issues}{github.com/DefElement/DefElement/issues}.
More detailed discussion about DefElement and its features can be held on the discussions board at \href{https://github.com/DefElement/DefElement/discussions}{github.com/DefElement/DefElement/discussions}.

More direct proposal of changes to DefElement is encouraged, and can be done by forking the DefElement repository, making the changes, then opening a pull request.
The authors of the paper act as the editors of DefElement and are responsible for checking the correctness of new contributions, both those made by other
editors and people outside the team. This checking is done via the pull request review feature on Github, and this allows the editors to comment on specific lines
have changed and suggest direct corrections to lines for more minor issues like typographical errors.

We strongly encourage readers of this paper to contribute to DefElement, by
opening issues for elements that are not yet included in the encyclopedia,
suggesting other changes, or contributing directly via pull requests.

%%%%%%%%%%%%%%%%%%%%%%%%%%%%%%%%%%%%%%%%%%%%%%%%%%%%%%%%%%%%%%%%%%%%%%%%%%%%%%%%%%%%%%%%%%%%%%
\subsection{Adding a finite element library}
Readers of this paper who are involved in the development of a finite element library
are encouraged to consider adding their library to the set of libraries that DefElement displays information about.
This can be done by adding a file to the folder \texttt{defelement/implementations},
this file should contain the definition of a subclass of DefElement's \texttt{Implementation} class.
This class includes methods that define how the information for the library will appear on the
element's page.

The subclass of \texttt{Implementation} can optionally include a \texttt{verify} function that
takes
the element's string for the implementation (as set in the \texttt{.def} file),
the reference cell,
the element's degree and
a dictionary of additional parameters (as set in the \texttt{.def} file)
and returns lists of entities associated with each subentity and a function that evaluates the basis functions of the element at some input points. If this function
is implemented, then verificiation will be performed for the library.

A full walkthrough guide to adding a new implementation to DefElement can be found on the DefElement
website at \href{https://defelement.org/adding-an-implementation.html}{defelement.org/adding-an-implementation.html}.
Currently only implementations with a Python interface are supported on DefElement, although we hope
to widen the range of supported languages in the future. Details about making other changes can be found on the
DefElement contributing page at \href{https://defelement.org/contributing.html}{defelement.org/contributing.html}.

%%%%%%%%%%%%%%%%%%%%%%%%%%%%%%%%%%%%%%%%%%%%%%%%%%%%%%%%%%%%%%%%%%%%%%%%%%%%%%%%%%%%%%%%%%%%%%
\section{Concluding remarks}\label{sec:conclusions}
As the number of finite elements that exist in the literature continues to grow, we believe
that it is increasingly important that information about elements can be found in a concise
and consistent format.
We hope that you find the information we have made available on DefElement useful, and
that you consider contributing to encyclopedia by editing existing information and adding
more details and elements.

\section*{Declarations}

\subsection*{Ethical approval}
Not applicable.

\subsection*{Funding}
Support from the Engineering and Physical Sciences Research Council for
JPD (EP/W026635/1) is gratefully acknowledged.
PDB was supported by EPSRC grant EP/W026260/1.
IM was supported by EPSRC grant EP/W524311/1.

\subsection*{Availability of data and materials}
DefElement's source code is available from the DefElement Github repository at \href{https://github.com/DefElement/DefElement}{github.com/DefElement/DefElement}.
An archived snapshot of the code used the build DefElement and the built HTML files is available on
Zenodo \cite{defelement-zenodo}. This archive is automatically updated every three months.

%%===========================================================================================%%
%% If you are submitting to one of the Nature Portfolio journals, using the eJP submission   %%
%% system, please include the references within the manuscript file itself. You may do this  %%
%% by copying the reference list from your .bbl file, paste it into the main manuscript .tex %%
%% file, and delete the associated \verb+\bibliography+ commands.                            %%
%%===========================================================================================%%

\bibliography{refs}

\end{document}